\theoremstyle{plain}
\newtheorem{theorem}{Theorem}[section]
\newtheorem{proposition}[theorem]{Proposition}
\newtheorem{lemma}[theorem]{Lemma}
\newtheorem{corollary}[theorem]{Corollary}
\theoremstyle{definition}
\newtheorem{definition}[theorem]{Definition}
\newtheorem*{definition*}{Definition}
\newtheorem*{proposition*}{Proposition}
\newtheorem{remark}[theorem]{Remark}
\newtheorem{example}[theorem]{Example}
\numberwithin{equation}{section}
\newcommand{\kroncoeff}{C}  
\newcommand{\charf}[1]{\big(\!\!\big( #1 \big)\!\!\big)}
\newcommand{\rug}[1]{[ #1 ]}
\newcommand{\comment}[1]{\vspace{5 mm}\par \noindent
\marginpar{\textsc{Comment}}
\framebox{\begin{minipage}[c]{0.95 \textwidth}
 #1 \end{minipage}}\vspace{5 mm}\\}
\renewcommand{\comment}[1]{}
\begin{document}
\title[Catalan Kroneckcer products]{Expressions for Catalan Kronecker products}

\author{Andrew A.H. Brown}
\address{Department of Mathematics, University of British Columbia, Vancouver, BC V6T 1Z2, Canada}
\curraddr{Department of Combinatorics and Optimization,
University of Waterloo,
Waterloo, ON  N2L 3G1, Canada}
\email{}

\author{Stephanie van Willigenburg}
\address{Department of Mathematics, University of British Columbia, Vancouver, BC V6T 1Z2, Canada}
\email{steph@math.ubc.ca}

\author{Mike Zabrocki}
\address{Department of Mathematics and Statistics, York University, Toronto, ON M3J 1P3, Canada}
\email{zabrocki@mathstat.yorku.ca}
\thanks{The  authors were supported in part by the National Sciences and Engineering Research Council of Canada.}
\subjclass[2000]{Primary 05E05; Secondary 05E10,  20C30} 
\keywords{Catalan number, double hook, hook, Kronecker coefficient, Kronecker product, Schur function, symmetric function} 

\begin{abstract} 
We give some elementary manifestly positive formulae for the Kronecker 
products $s_{(d,d)} \ast s_{(d+k,d-k)}$ and $s_{(d,d)} \ast s_{(2d-k,1^k)}$. 
These formulae demonstrate some fundamental
properties of the Kronecker coefficients, and we use them to deduce a number
of enumerative and combinatorial results.
\end{abstract}

\maketitle

\section{Introduction}\label{sec:intro} 
A classic open problem in  algebraic combinatorics is
to explain the Kronecker product (or internal product) of two Schur functions
in a manifestly positive combinatorial formula.
The Kronecker product  of two Schur functions is the
Frobenius image of the internal tensor product of two irreducible symmetric group modules, or
it is alternatively the characters of the 
induced tensor product of general linear group modules.
Although this expression clearly has non-negative coefficients when
expanded in terms of Schur functions for representation theoretic reasons,
it remains an open problem to provide a satisfying positive
combinatorial or algebraic formula for the Kronecker 
product of two Schur functions.

Many attempts have been made to capture some aspect of these coefficients, for example,  special cases \cite{BKron1, BKron2,  RemmelWhitehead, Rosas}, asymptotics \cite{BOrellana1, BOrellana2}, 
stability \cite{Vallejo},
the complexity of calculating them \cite{FPSAC08}, and conditions when they are
non-zero \cite{Dvir}.  Given that the Littlewood-Richardson rule and many successors 
have so compactly and cleanly been able to describe the
external product of two Schur functions it seems as though some new ideas
for capturing the combinatorics of Kronecker coefficients are needed.

The results in this paper were inspired by the symmetric function identity  for the Kronecker product of two Schur functions, $s_{(d,d)} \ast s_{(d,d)}$ that appears as \cite[Theorem I.1]{GWXZ1}.  More precisely, for a subset of partitions $X$ of $2d$, if we set $\rug{X} = \sum_{\lambda \in X} s_\lambda$, called a \emph{rug},
then 
\begin{equation}\label{sddsdd}
s_{(d,d)} \ast s_{(d,d)} = \rug{\hbox{ $4$ parts all even or all odd }}~.
\end{equation} 
This particular identity is significantly different
from most published results on the Kronecker product because it clearly states exactly which
partitions have non-zero coefficients and that all of the coefficients
are 0 or 1 instead of giving a combinatorial interpretation or algorithm.

This computation originally arose in
the solution to a mathematical physics problem related
to resolving the interference of 4 qubits \cite{Wallach} because the sum of these coefficients
is equal to the dimensions of polynomial invariants of four copies of 
$SL(2, {\mathbb C})$ acting on ${\mathbb C}^{8}$. Understanding the Kronecker
product of $s_{(d,d)}$ with $s_\lambda$ for partitions $\lambda$ with $4$ parts
which are all even or all odd would be useful for 
calculating the dimensions of invariants of six copies of $SL(2, {\mathbb C})$
acting on ${\mathbb C}^{12}$ which is a measure of entanglement of 6 qubits.
Ultimately we would like to be able to compute
$$CT_{a_1, a_2, \ldots, a_k} \left(\frac{\prod_{i=1}^k (1-a_i^2)}{\prod_{S \subseteq \{1,2,\ldots k\}} (1-q\prod_{i \in S} a_i/ \prod_{j \notin S} a_j)} \right) = \sum_{d \geq 0}
\left< s_{(d,d)}^{\ast k}, s_{(2d)} \right> q^{2d}$$
(see \cite[formulas I.4 and I.5]{GWXZ2} and \cite{LT} for a discussion) where $CT$ represents the
operation of taking the constant term and equations of this type is a motivation for
understanding the Kronecker product with $s_{(d,d)}$ as completely as possible.

Using  $s_{(d,d)} \ast s_{(d,d)}$ as our inspiration, we were able to show (Corollary~\ref{cor:easycases}) that
$$
s_{(d,d)} \ast s_{(d+1,d-1)} = \rug{\hbox{ $2$ even parts and $2$ odd parts } }$$
and with a similar computation we were also able to derive that
\begin{equation*}
s_{(d,d)} \ast s_{(d+2,d-2)} = \rug{ \hbox{ 4 parts, all even or all odd, but not 3 the same } } + 
\rug{ \hbox{ 4 distinct parts } }~.
\end{equation*}
One interesting feature of this formula is that it says that
all of the coefficients in the Schur expansion of $s_{(d,d)} \ast s_{(d+2,d-2)}$ are either
$0,1$ or $2$ and the coefficient is $2$ with those Schur functions indexed by partitions
with 4 distinct parts that are all even or all odd.  

These and larger examples suggested that the Schur function expansion of 
$s_{(d,d)} \ast s_{(d+k,d-k)}$ has the pattern of a boolean lattice of subsets,
in that it can be written as the sum of $\lfloor k/2 \rfloor +1$ intersecting
sums of Schur functions each with coefficient $1$.  The main result of this article is
Theorem \ref{cleanest}, which states
\begin{equation}
s_{(d,d)} \ast s_{(d+k,d-k)} = \sum_{i=0}^{k} \rug{ (k+i,k,i)P } + \sum_{i=1}^k  
\rug{ (k+i+1,k+1,i)P }
\end{equation}
where we have used the notation $\gamma P$ to represent the set of partitions $\lambda$ of
$2d$ of length less than or equal to $4$ such that $\lambda - \gamma$ (representing a
vector difference)
is a partition with 4 even parts or 4 odd parts.
%
%
%
%
The disjoint sets of this sum can be grouped so that the sum is of only $\lfloor k/2 \rfloor +1$
terms, which shows that the coefficients   
always lie in the range $0$ through $\lfloor k/2 \rfloor+1$. The most interesting
aspect of this formula is that we see the lattice of subsets arising in a natural and
unexpected way in a representation theoretical
setting.  This is potentially part of a more general result and the hope is that this 
particular model will shed light on a general formula for the Kronecker product of
two Schur functions, but our main motivation for computing these is to develop
computational tools.

There are yet further motivations for restricting our attention to 
understanding the Kronecker product of $s_{(d,d)}$ with another Schur function.
The Schur functions indexed by the partition $(d,d)$ are a special family for several 
combinatorial reasons and so there is reason to believe that their behavior will be more accessible than the general case for the Kronecker product of
two Schur functions.   More precisely, Schur functions
indexed by partitions with two parts are notable because they 
are the difference of two homogeneous
symmetric functions, for which a combinatorial formula for the Kronecker product is known.
In addition, a partition $(d,d)$ is rectangular and hence falls under a second category of
Schur functions that are often combinatorially more straightforward to manipulate than the general case.  

From the hook length formula
it follows that the number of standard tableau of shape $(d,d)$ is equal to
the Catalan number $C_{d} = \frac{1}{d+1} {{2d}\choose{d}}$.  Therefore, from the
perspective of $S_{2d}$ representations, taking the Kronecker product with the
Schur function $s_{(d,d)}$ and the Frobenius image of a module 
explains how the tensor of a representation with a particular irreducible 
module of dimension $C_d$ decomposes.

Work towards understanding the Kronecker product of two Schur functions
each indexed by a two-row partition was done by   Remmel and Whitehead \cite{RemmelWhitehead} and Rosas  \cite{Rosas} and we
specialize the results of the latter to obtain the the boolean lattice type expression
of Theorem \ref{cleanest}.  Kronecker product expressions in \cite{Rosas} involve adding and subtracting terms,  and do not particularly illuminate  the non-negativity of Kronecker coefficients that we demonstrate here. However,  the results in
\cite{Rosas} begin calculations necessary to arrive at some of the results 
we present.  



The paper is structured as follows. In the next section we review pertinent 
background information
including necessary symmetric function notation and lemmas necessary 
for later computation.
In Section~\ref{sec:tworow} we consider a generalization of formula 
\eqref{sddsdd}
to an expression for $s_{(d,d)} \ast s_{(d+k,d-k)}$.   In the following section 
(Section~\ref{sec:hookexpr}) 
we give an explicit combinatorial formula for the Kronecker product 
$s_{(d,d)} \ast s_{(2d-k, 1^k)}$.  We consider the hook case because it is an 
infinite family of Kronecker products that also seems to
have a compact formula.  We also observe in Theorem~\ref{the:hookform} that 
the product $s_{(d,d)} \ast s_{(2d-k, 1^k)}$ is multiplicity free. 
Finally, Section~\ref{sec:combinatorics} is devoted to  combinatorial and 
symmetric function consequences of our results.  In particular we are able to
give generating functions for the partitions that have a particular coefficient
in the expression $s_{(d,d)} \ast s_{(d+k,d-k)}$.

\section{Background}

\subsection{Partitions} 
A \emph{partition} of an integer $n$ is a finite sequence of non-negative integers
$(\lambda_1 \geq \lambda_2 \geq \cdots \geq \lambda_\ell)$ whose values sum
to $n$, denoted $\lambda \vdash n$.  The \emph{height} or \emph{length} of
the partition,   denoted $\ell(\lambda)$,  is the maximum index for which
$\lambda_{\ell(\lambda)} > 0$.  We call the $\lambda _i$ \emph{parts} or \emph{rows} of the
partition and if $\lambda _i$ appears $n_i$ times we abbreviate this subsequence to $\lambda _i^{n_i}$. With this in mind if $\lambda = (k^{n_k}, (k-1)^{n_{k-1}}, \ldots, 1^{n_1})$ then we define $z_\lambda = 1^{n_1}1! 2^{n_2}2!\cdots k^{n_k}k!  $.
The $0$ parts of the partition are optional
and we will assume that for $i > \ell(\lambda)$ we have $\lambda_i = 0$.  

    Let $\lambda=(\lambda_1, \lambda_2, \ldots, \lambda_\ell)$ be a partition of $n$.  To form the \emph{diagram} associated with $\lambda$, place a cell at each point in matrix notation $(j, i)$  where $1\leq i \leq \lambda_j$ and $1\leq j \leq \ell$.  We say $\lambda$ has \emph{transpose} $\lambda'$ if the diagram for $\lambda'$ is given by the points $(i,j)$ where $1\leq i \leq \lambda_j$ and $1\leq j \leq \ell$.
We call $\lambda \vdash n \geq 3$  a \emph{hook} if $\lambda=(\lambda_1, 1^m)$ where $\lambda_1 \geq 2$ and $m\geq 1$; if $m=0$, we call $\lambda$ a \emph{one-row shape}, and if $\lambda_1=1$, we call $\lambda$ a \emph{one-column shape}.  We say $\lambda=(\lambda_1,\lambda_2,2^{m_2}, 1^{m_1}) \vdash n \geq4$ is a \emph{double hook} if $\lambda_1 \geq \lambda_2 \geq 2$, $m_2 \geq0$, $m_1 \geq0$.  We say $\lambda$ is a \emph{two-row shape} if $\lambda=(\lambda_1,\lambda_2)$.

\subsection{Symmetric functions and the Kronecker product} The ring of symmetric functions is the graded subring of $\mathbb{Q}[x_1, x_2, \ldots]$ given by
$$\Lambda :=\mathbb{Q}[p_1, p_2, \ldots]$$where $p_i=x_1^i+x_2^i+\cdots$ are the elementary power sum symmetric functions. For $\lambda=(\lambda_1, \lambda_2, \ldots, \lambda_\ell)$ we define $p_\lambda := p_{\lambda _1}p_{\lambda _2}\cdots p_{\lambda _{\ell}}$. The interested reader
should consult a reference such as \cite{Mac} for more details of the structure of this ring.
It is straightforward to see that  $\{ p_\lambda \} _{\lambda \vdash n\geq 0}$ forms a basis for $\Lambda$. This basis is orthogonal to itself with respect to the scalar product on $\Lambda$:
\begin{equation*}
\langle p_\lambda , p_\mu \rangle = z_\lambda\delta _{\lambda \mu}.
\end{equation*}
However, our focus for this paper will be the basis of $\Lambda$ known as the basis of \emph{Schur functions}, $\{ s_\lambda \} _{\lambda \vdash n\geq 0}$, which is the orthonormal basis under this scalar product:
$$\langle s_\lambda , s_\mu \rangle = \delta _{\lambda \mu},$$
and its behaviour under the Kronecker product.

The \emph{Kronecker product} is the operation on symmetric functions
\begin{equation}\label{krondef}\frac{p_\lambda}{z_\lambda} \ast \frac{p_\mu}{z_\mu} 
= \delta_{\lambda\mu} \frac{p_\lambda}{z_\lambda}\end{equation}
that in terms of the Schur functions becomes 
$$s_\mu \ast s_\nu = \sum_{\lambda \vdash |\mu|} \kroncoeff_{\mu\nu\lambda} s_\lambda .$$
It transpires that the \emph{Kronecker coefficients} $\kroncoeff_{\mu\nu\lambda}$
encode the inner tensor product of symmetric group representations.
That is, if we denote the irreducible $S_n$ module indexed by a partition
$\lambda$ by $M^\lambda$, and $M^\mu \otimes M^\nu$ represents the
tensor of two modules with the diagonal action, then the module 
decomposes as
$$M^\mu \otimes M^\nu 
\simeq \bigoplus_\lambda  (M^\lambda)^{\oplus \kroncoeff_{\mu\nu\lambda}}.$$

The Kronecker coefficients also encode
the decomposition of $GL_{nm}$ polynomial 
representations to $GL_n \otimes GL_m$ representations
$$Res^{GL_{mn}}_{GL_n \otimes GL_m}(V^\lambda ) 
\simeq \bigoplus_{\mu, \nu} (V^\mu \otimes V^\nu)^{\oplus \kroncoeff_{\mu\nu\lambda} }.$$
It easily follows from  \eqref{krondef} and the linearity of the product that
these coefficients satisfy the symmetries
$$\kroncoeff_{\mu\nu\lambda}
=\kroncoeff_{\nu\mu\lambda}
=\kroncoeff_{\mu\lambda\nu}
=\kroncoeff_{\mu ' \nu ' \lambda},$$
and
$$\kroncoeff_{\lambda\mu(n)} = \kroncoeff_{\lambda\mu'(1^n)} =
\begin{cases}
1&\hbox{ if } \lambda = \mu\\
0 &\hbox{ otherwise}
\end{cases}
$$
that we will use extensively in what follows.

We will use some symmetric function identities in the remaining sections.  Recall that for
a symmetric function $f$,
the symmetric function operator $f^\perp$ (read `f perp') 
is defined to be the operator that is dual to
multiplication with respect to the scalar product.  That is,
\begin{equation}
\left< f^\perp g, h \right>  = \left< g, f \cdot h \right>~.
\end{equation}
The perp operator can also be defined linearly by
$$s_\lambda^\perp s_\mu = s_{\mu\slash \lambda} =
\sum_{\nu \vdash |\mu| - |\lambda|} c^{\mu}_{\lambda\nu} s_\nu$$
where the $c^{\mu}_{\lambda\nu}$ are the Littlewood-Richardson coefficients.
We will make use of the following well known relation, which connects the
internal and external products,
\begin{equation}\label{multperpkron}
\langle s_\lambda f, g \ast h \rangle = 
\sum_{\mu,\nu \vdash |\lambda|} \kroncoeff_{\lambda\mu\nu} \langle
f, (s_\mu^\perp g) \ast (s_\nu^\perp h) \rangle~.
\end{equation}

From these two identities and use of the Littlewood-Richardson rule we derive the following lemma.
\begin{lemma} \label{lem:recurrence}
If $\ell(\lambda) > 4$, then $\kroncoeff_{(d,d)(a,b)\lambda} = 0$.  Otherwise
it satisfies the following recurrences.
If $\ell(\lambda) =4$, then
\begin{equation}\label{fourcol}\left< s_{(d,d)} \ast s_{(d+k,d-k)}, s_\lambda \right>
= \left< s_{(d-2,d-2)} \ast s_{(d+k-2,d-k-2)}, s_{\lambda - (1^4)} \right>~.
\end{equation}
If $\ell(\lambda) =3$, then
\begin{align} \label{threecol}
\left< s_{(d,d)} \ast s_{(d+k,d-k)}, s_\lambda \right>
= &\left< s_{(d-1,d-1)} \ast s_{(d+k-1,d-k-1)},  s_{(1)} s_{\lambda - (1^3)} \right>\\
\nonumber&- \left< s_{(d-2,d-2)} \ast s_{(d+k-2,d-k-2)}, s_{(1)}^\perp s_{\lambda-(1^3)} \right>~.
\end{align}
If $\ell(\lambda) =2$, then
\begin{equation}\label{twocol}
\left< s_{(d,d)} \ast s_{(d+k,d-k)}, s_\lambda \right> = \begin{cases}
1&\hbox{ if }k \equiv \lambda_2~(mod~2)\hbox{ and }\lambda_2 \geq k\\
0&\hbox{ otherwise. }
\end{cases}
\end{equation}
\end{lemma}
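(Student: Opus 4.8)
The plan is to derive the vanishing statement and all three recurrences from a single ``column-peeling'' specialization of \eqref{multperpkron}. Taking $\lambda=(1^r)$ in \eqref{multperpkron} and using the symmetry of the Kronecker coefficients together with $\kroncoeff_{\mu\nu(1^r)}=\delta_{\nu,\mu'}$, every term on the right except $\nu=\mu'$ drops out, so that
\[
\langle s_{(1^r)} f,\, g \ast h\rangle \;=\; \sum_{\mu \vdash r} \big\langle f,\, (s_\mu^\perp g)\ast(s_{\mu'}^\perp h)\big\rangle .
\]
Specializing $g=s_{(d,d)}$ and $h=s_{(d+k,d-k)}$, the factors $s_\mu^\perp g = s_{(d,d)/\mu}$ and $s_{\mu'}^\perp h = s_{(d+k,d-k)/\mu'}$ vanish unless both $\mu$ and $\mu'$ have at most two rows, i.e.\ unless $\mu$ fits inside a $2\times 2$ box. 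This box bound is the combinatorial engine of the lemma.

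For the vanishing claim I would argue by contradiction. Since $s_{(d,d)}\ast s_{(d+k,d-k)}$ has finitely many Schur components, if some $\lambda$ with $\ell(\lambda)\geq 5$ had nonzero coefficient I could choose one of maximal length. Applying the peeling identity with $r=\ell(\lambda)\geq 5$ and $f=s_{\lambda-(1^r)}$, the right-hand side is empty because no partition of $r\geq 5$ fits in a $2\times 2$ box. On the left, $s_{(1^r)}s_{\lambda-(1^r)}$ is a sum of Schur functions obtained by adding a vertical $r$-strip, and $\lambda$ is the \emph{unique} term of length $r$ (every other term has strictly greater length); by maximality all those other terms pair to zero, forcing the coefficient of $s_\lambda$ to be $0$. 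With the length bound established, \eqref{fourcol} is immediate: for $\ell(\lambda)=4$ only $\mu=(2,2)$ survives the box bound, giving $s_{(2,2)}^\perp s_{(d,d)}=s_{(d-2,d-2)}$ and $s_{(2,2)}^\perp s_{(d+k,d-k)}=s_{(d+k-2,d-k-2)}$, while the only length-$\leq 4$ term of $s_{(1^4)}s_{\lambda-(1^4)}$ is $s_\lambda$ itself, all longer terms being annihilated by Part~1.

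The three-row recurrence \eqref{threecol} is the step I expect to be the main obstacle, since its right-hand side is an inclusion--exclusion rather than a single peel. My plan is to verify it by reducing both terms. Applying \eqref{multperpkron} with $(1)$ in place of $\lambda$, the first term $\langle s_{(d-1,d-1)}\ast s_{(d+k-1,d-k-1)},\, s_{(1)}s_{\lambda-(1^3)}\rangle$ equals $\langle s_{\lambda-(1^3)},\,(s_{(1)}^\perp s_{(d-1,d-1)})\ast(s_{(1)}^\perp s_{(d+k-1,d-k-1)})\rangle$, which is precisely the right-hand side produced by the $r=3$ peel of $\langle s_{(1^3)}s_{\lambda-(1^3)},\, s_{(d,d)}\ast s_{(d+k,d-k)}\rangle$ (only $\mu=(2,1)$ survives). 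By the length bound, that left-hand side is $\langle s_{(d,d)}\ast s_{(d+k,d-k)}, s_\lambda\rangle$ plus a sum over the length-$4$ partitions $\nu$ occurring in $s_{(1^3)}s_{\lambda-(1^3)}$. Using \eqref{fourcol} on each such $\nu$ rewrites this correction as $\langle s_{(d-2,d-2)}\ast s_{(d+k-2,d-k-2)},\, \sum_\nu s_{\nu-(1^4)}\rangle$, and a short Pieri computation shows that the shapes $\nu-(1^4)$ range over exactly the single-box removals of $\lambda-(1^3)$, so the correction equals the subtracted term $\langle s_{(d-2,d-2)}\ast s_{(d+k-2,d-k-2)},\, s_{(1)}^\perp s_{\lambda-(1^3)}\rangle$; rearranging yields \eqref{threecol}. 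The delicate points will be the boundary cases (for instance $\lambda_3=1$, or $\nu$ failing to be a partition), which I must check kill the matching terms on both sides simultaneously.

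Finally, \eqref{twocol} is a genuine base case, and I would prove it directly in the complete homogeneous basis. Writing $s_{(\lambda_1,\lambda_2)}=h_{\lambda_1}h_{\lambda_2}-h_{\lambda_1+1}h_{\lambda_2-1}$ reduces the problem to evaluating $\langle s_{(d,d)}\ast s_{(d+k,d-k)},\, h_a h_b\rangle$. Peeling the one-row shape $(a)$, for which $\kroncoeff_{(a)\mu\nu}=\delta_{\mu\nu}$ by symmetry of the stated special values, and then using $\langle h_b, s_\alpha\ast s_\beta\rangle=\kroncoeff_{\alpha\beta(b)}=\delta_{\alpha\beta}$ extended bilinearly, turns the pairing into $\sum_{\mu}\langle s_{(d,d)/\mu},\, s_{(d+k,d-k)/\mu}\rangle$, a sum of Hall inner products of two-row skew Schur functions. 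Evaluating these with the two-row Littlewood--Richardson rule and forming the alternating Jacobi--Trudi combination should collapse everything to the stated condition $k\equiv\lambda_2\ (\mathrm{mod}\ 2)$ and $\lambda_2\geq k$. I expect the only subtlety here to be tracking which skew components coincide and verifying the cancellation in the alternating sum.
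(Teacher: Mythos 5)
Your proposal is correct, and for the two recurrences \eqref{fourcol} and \eqref{threecol} it runs on the same engine as the paper: specialize \eqref{multperpkron} to a column $(1^r)$, note that $s_\mu^\perp s_{(d,d)}$ and $s_{\mu'}^\perp s_{(d+k,d-k)}$ force $\mu$ into a $2\times 2$ box, and use the Pieri rule to isolate $s_\lambda$. Your handling of the length-$3$ case --- invoking the already-established \eqref{fourcol} on the length-$4$ Pieri terms and matching them bijectively with the single-box removals of $\lambda-(1^3)$ --- is a reorganization of the paper's combined identity $s_{(1^3)}s_{\lambda-(1^3)} - s_{(1^4)}s_{(1)}^\perp s_{\lambda-(1^3)} = s_\lambda + (\hbox{terms of length} > 4)$; in fact the correspondence $\nu \mapsto \nu-(1^4)$ is clean and none of the boundary pathologies you worry about arise, though you should state explicitly the Littlewood--Richardson identity $s_{(2,1)}^\perp s_{(a,b)} = s_{(1)}^\perp s_{(a-1,b-1)}$, which your ``$r=3$ peel'' step uses silently and which the paper proves. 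Where you genuinely diverge is at the two ends. For $\ell(\lambda)>4$ the paper argues directly: by symmetry and Jacobi--Trudi, $\kroncoeff_{(d,d)(a,b)\lambda}$ is a difference of terms $\langle s_{(r)}s_{(s)}, s_{(a,b)}\ast s_\lambda\rangle = \sum_{\mu\vdash r}\langle s_{(a,b)/\mu}, s_{\lambda/\mu}\rangle$, each of which vanishes because the two skew expansions are supported on partitions of incompatible lengths; your extremal argument (pick a nonzero coefficient of maximal length and peel a full column) avoids Jacobi--Trudi but needs finiteness plus maximality to kill the longer Pieri terms --- both are sound, and yours is more self-contained relative to \eqref{multperpkron} alone. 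For the base case \eqref{twocol} the paper simply cites \cite[Theorem 2.2]{GWXZ1}, whereas you propose to prove it; your plan does close, since $(d,d)$ is a rectangle so $s_{(d,d)/\mu}$ is a single Schur function of the complementary shape, whence $\langle s_{(d,d)}\ast s_{(d+k,d-k)}, h_a h_b\rangle$ counts two-row partitions $\mu\vdash a$ with $\mu_1\le d$, $\mu_2\le d-k$, $\mu_1-\mu_2\ge k$, and the Jacobi--Trudi difference of two such counts telescopes to the stated $0$ or $1$. The net result is a proof identical to the paper's where it matters, and somewhat more self-contained at the two boundary cases.
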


\begin{proof}
For \eqref{twocol} we refer the reader to \cite[Theorem 2.2]{GWXZ1}.  The proof
there is similar to the proofs of \eqref{fourcol} and \eqref{threecol} that we provide here (in fact,
 \eqref{fourcol} and \eqref{threecol} with $k=0$ appear in that reference).

If $\ell(\lambda)>4$, $\kroncoeff_{(d,d)(a,b)\lambda}$ is the difference
of $\langle s_{(d)} s_{(d)}, s_{(a,b)} \ast s_\lambda \rangle$ and
   $\langle s_{(d+1)} s_{(d-1)}, s_{(a,b)} \ast s_\lambda \rangle$.
By \eqref{multperpkron}, $\langle s_{(r)} s_{(s)}, s_{(a,b)} \ast s_\lambda \rangle
= \sum_{\mu \vdash r} \langle s_{(a,b) \slash \mu}, s_{\lambda\slash\mu} \rangle$.
The Littlewood-Richardson rule says that the terms in the expansion  
of $s_{(a,b) \slash \mu}$
in the Schur basis will have length at most $2$ and this $s_{(a,b)  
\slash \mu}$ will be zero unless the
length of $\mu$ is less than or equal to 2.  By consequence, the terms
in the expansion of
$s_{\lambda\slash\mu}$ in the Schur basis will be indexed by  
partitions of length greater than two,
therefore this expression will always be $0$.

Assume that $\ell(\lambda) = 4$.
By the Pieri rule we have that $s_{(1^4)} s_{\lambda - (1^4)} = s_\lambda + $
terms of the form $s_\gamma$ where $\ell(\gamma) > 4$.  By consequence,
\begin{align*}\langle s_{(d,d)} \ast s_{(d+k,d-k)}, s_\lambda \rangle &=
\langle s_{(d,d)} \ast s_{(d+k,d-k)}, s_{(1^4)} s_{\lambda - (1^4)} \rangle\\
&= \sum_{\mu \vdash 4} \langle (s_\mu^\perp s_{(d,d)}) \ast (s_{\mu'}^\perp s_{(d+k,d-k)}), 
s_{\lambda - (1^4)} \rangle ~.
\end{align*}
Every term in this sum is $0$ unless both $\mu$ and $\mu'$ have length less than or equal to $2$.  
The only term for which this is true is 
$\mu = (2,2)$ and $s_{(2,2)}^\perp( s_{(a,b)} ) = s_{(a-2,b-2)}$, hence \eqref{fourcol} holds.

Assume that $\ell(\lambda) = 3$.  Although there are cases to check, it follows again
from the Pieri rule that $s_{(1^3)} s_{\lambda - (1^3)} - s_{(1^4)} s_{(1)}^\perp s_{\lambda - (1^3)}
= s_\lambda +$ terms involving $s_\gamma$ where $\ell(\gamma)> 4$.  Therefore,
\begin{align*}
\langle s_{(d,d)} \ast s_{(d+k,d-k)}, s_\lambda \rangle &=
\langle s_{(d,d)} \ast s_{(d+k,d-k)}, s_{(1^3)} s_{\lambda - (1^3)} - s_{(1^4)} s_{(1)}^\perp s_{\lambda - (1^3)} \rangle\\
& = \sum_{\mu \vdash 3} \langle (s_\mu^\perp s_{(d,d)}) \ast (s_{\mu'}^\perp s_{(d+k,d-k)}), s_{\lambda - (1^3)} \rangle \\
&\hskip .3in- \langle s_{(d-2,d-2)} \ast s_{(d+k-2,d-k-2)}, s_{(1)}^\perp s_{\lambda - (1^3)} \rangle~.
\end{align*}
Again, in the sum the only terms that are not equal to $0$ are those that have
the length of both $\mu$ and $\mu'$ less than or equal to $2$ and in this case
the only such partition is $\mu = (2,1)$.  By the Littlewood-Richardson rule we have that
$s_{(2,1)}^\perp( s_{(a,b)} ) = s_{(1)}^\perp( s_{(a-1,b-1)} )$, hence this last expression is
equal to
\begin{align*}
\langle &(s_{(1)}^\perp s_{(d-1,d-1)}) \ast (s_{(1)}^\perp s_{(d+k-1,d-k-1)}), s_{\lambda - (1^3)} \rangle
- \langle s_{(d-2,d-2)} \ast s_{(d+k-2,d-k-2)}, s_{(1)}^\perp s_{\lambda - (1^3)} \rangle\\
&= \langle s_{(d-1,d-1)} \ast  s_{(d+k-1,d-k-1)}), s_{(1)} s_{\lambda - (1^3)} \rangle
- \langle s_{(d-2,d-2)} \ast s_{(d+k-2,d-k-2)}, s_{(1)}^\perp s_{\lambda - (1^3)} \rangle~.
\end{align*}
\end{proof}

\comment{
\subsection{Characteristic functions} }
To express our main results we will use the characteristic of a boolean valued proposition. If $R$ is a proposition then we denote the propositional characteristic (or indicator) function of $R$ by
        \begin{eqnarray*}
            \charf{ R }=  \left\{
                \begin{array}{ll}
                    1 & \textrm{if proposition R is true}
                    \\ 0 & \textrm{otherwise.}
                \end{array}
            \right.
        \end{eqnarray*}
\comment{

These characteristic functions have the following basic properties.
\begin{lemma} \label{propfunc}
Let $Q$ and $R$ be propositions, and $\sim \! R$ denote the negation of $R$.
  
    {\small
    \begin{tabular}{lcl} 
        1. $\Big. \charf{ R   }\charf{ \! \sim \! R  }=0$. \Big. & \hspace{1cm}&
        4. $\charf{ R \textrm{ or } Q  }
            = \charf{ R   } + \charf{ Q   }
                - \charf{ R   }\charf{ Q   }.$ \\
        2. $\Big. \charf{ R   }+\charf{ \! \sim \! R  }=1.\Big. $ &&
        5. If $R \Rightarrow Q$, then $\charf{ R   }=\charf{ R   }
            \charf{ Q   } \neq \charf{ Q  }$. \\
        3. $\Big. \charf{ R \textrm{ and } Q  }
            = \charf{ R   }\charf{ Q   }.\Big. $ &&
        6. If $R \Leftrightarrow Q$, then $\charf{ R   }=\charf{ R   }
            \charf{ Q   }=\charf{ Q   }.  $
    \end{tabular}}
\end{lemma}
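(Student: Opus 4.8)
The plan is to observe that each of the six identities is an equality between quantities that take only the values $0$ and $1$, so every part can be settled by a finite case analysis on the truth values of the propositions involved. First I would record the convention coming straight from the definition: $\charf{R} = 1$ exactly when $R$ is true and $\charf{R} = 0$ otherwise, and likewise for $\sim R$, whose truth value is always opposite to that of $R$. With this in hand each statement reduces to checking at most four cases.

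For parts 1 and 2 there are only two cases. If $R$ is true then $(\charf{R},\charf{\sim R}) = (1,0)$, and if $R$ is false then $(\charf{R},\charf{\sim R}) = (0,1)$; in either case the product is $0$ and the sum is $1$, giving parts 1 and 2. For part 3, the proposition $R \text{ and } Q$ is true precisely when both $R$ and $Q$ are true, which is exactly the condition $\charf{R}\charf{Q} = 1$, while each of the other three combinations of truth values makes both sides $0$. Part 4 can be checked directly over the four cases for $(R,Q)$, or deduced from parts 2 and 3 using De Morgan's law: writing $R \text{ or } Q$ as $\sim(\sim R \text{ and } \sim Q)$ gives $\charf{R \text{ or } Q} = 1 - \charf{\sim R}\charf{\sim Q} = 1 - (1-\charf{R})(1-\charf{Q})$, which expands to the claimed expression.

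For part 5 the hypothesis $R \Rightarrow Q$ removes the case in which $R$ is true and $Q$ is false, leaving three cases, and in each of them $\charf{R}$ and $\charf{R}\charf{Q}$ agree (both equal $1$ when $R$ holds and both equal $0$ when $R$ fails), which is the identity $\charf{R} = \charf{R}\charf{Q}$. Part 6 then follows by applying part 5 in both directions, since $R \Leftrightarrow Q$ means both $R \Rightarrow Q$ and $Q \Rightarrow R$, forcing $\charf{R} = \charf{R}\charf{Q} = \charf{Q}$.

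There is essentially no obstacle; the one point needing a word of care is the trailing $\charf{R}\charf{Q} \neq \charf{Q}$ in part 5. Read pointwise this fails when $R$ holds, so it must be understood as an assertion about these indicators \emph{as functions} of the underlying object (here, a partition): a proper implication $R \Rightarrow Q$, for which some object satisfies $Q$ but not $R$, makes $\charf{R}\charf{Q}$ and $\charf{Q}$ differ at that object and hence distinct as functions, whereas the equivalence of part 6 makes all three expressions the same function. I would state this reading explicitly so that parts 5 and 6 cleanly separate the proper-implication and equivalence cases.
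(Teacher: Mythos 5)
Your proof is correct. There is nothing in the paper to compare it against: the lemma is stated as a list of ``basic properties'' with no proof whatsoever, and in fact the entire passage containing it sits inside the paper's \texttt{comment} macro, which the preamble redefines to discard its argument, so the lemma never even appears in the compiled document. Your four-case truth-table verification (with De Morgan's law for part 4, and part 5 applied in both directions to get part 6) is exactly the routine checking the authors silently rely on.

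The one substantive point concerns part 5, and you handle it correctly: read as a statement about two fixed truth values, $\charf{R}=\charf{R}\charf{Q}\neq\charf{Q}$ is simply false when $R$ and $Q$ are both true, so the trailing inequality can only be meant for indicators viewed as functions of an underlying object (a partition, an integer) under a proper implication, where some object satisfies $Q$ but not $R$ and the two functions differ there. Making that reading explicit repairs a genuine imprecision in the statement rather than introducing one; the equality $\charf{R}=\charf{R}\charf{Q}$, which is the part the paper's (commented-out) computations actually use when multiplying summands by additional indicator factors, holds pointwise and your case analysis establishes it.
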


Characteristic functions also give us some useful ways to write $\left\lceil \frac{t}{2} \right\rceil$.
\begin{lemma} \label{ceil lemma}
    Let $t$ be a non-negative integer.  Then
    $$         \left\lceil \frac{t}{2} \right\rceil  = \frac{t + \charf{ t \textrm{ odd}  }}{2}
         = \left\lceil \frac{t\! +\! 1}{2} \right\rceil
            - \charf{ t \textrm{ even}  }
         = \sum_{ \stackrel{1 \leq \, a  }{ a \; odd}}
                \charf{ a \leq t }.
    $$
\end{lemma}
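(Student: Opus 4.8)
The plan is to prove all three equalities by a single direct case analysis on the parity of $t$, which is the natural approach since $\lceil t/2 \rceil$ and each of the three right-hand expressions are governed entirely by whether $t$ is even or odd. First I would write $t = 2m$ when $t$ is even and $t = 2m+1$ when $t$ is odd, for a non-negative integer $m$. Note that the value $t = 0$ is simply the $m=0$ instance of the even branch, so no separate boundary argument is needed, and the assumption that $t$ is a non-negative integer guarantees $m \geq 0$ throughout.

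In the even case $t = 2m$, I would record the four data $\lceil t/2 \rceil = m$, $\charf{t \textrm{ odd}} = 0$, $\charf{t \textrm{ even}} = 1$, and then verify each expression in turn: the first gives $\frac{2m + 0}{2} = m$; the second gives $\lceil (2m+1)/2 \rceil - 1 = (m+1) - 1 = m$; and the third I would handle below. In the odd case $t = 2m+1$, I would instead use $\lceil t/2 \rceil = m+1$, $\charf{t \textrm{ odd}} = 1$, $\charf{t \textrm{ even}} = 0$, and check that the first expression gives $\frac{(2m+1) + 1}{2} = m+1$ and the second gives $\lceil (2m+2)/2 \rceil - 0 = m+1$. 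In both parities every expression then agrees with $\lceil t/2 \rceil$.

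For the third expression I would interpret $\sum_{1 \le a,\, a \text{ odd}} \charf{a \le t}$ as simply counting the odd positive integers not exceeding $t$. When $t = 2m$ these are $1, 3, \ldots, 2m-1$, which is exactly $m$ values; when $t = 2m+1$ they are $1, 3, \ldots, 2m+1$, which is exactly $m+1$ values. In each parity this count matches $\lceil t/2 \rceil$ computed above, completing the chain of equalities. I expect essentially no obstacle here: the statement is an elementary identity, and the only points requiring any care are keeping the two distinct characteristic functions $\charf{t \textrm{ odd}}$ and $\charf{t \textrm{ even}}$ straight across the two cases, and carrying out the count of odd integers in the final sum correctly at the endpoints. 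Alternatively, one could prove the first equality by the above casework and then deduce the second from the elementary relation $\lceil (t+1)/2 \rceil = \lceil t/2 \rceil + \charf{t \textrm{ even}}$, but the uniform two-case verification is the most transparent route.
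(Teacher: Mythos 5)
Your proof is correct in every detail: the parity split $t=2m$ versus $t=2m+1$ handles all four expressions, the endpoint $t=0$ is correctly absorbed into the even case as an empty sum, and the count of odd integers $1,3,\ldots$ up to $t$ matches $\lceil t/2\rceil$ in both cases. For comparison, the paper states this lemma with no proof at all (it is treated as an elementary fact, and indeed the lemma sits in a portion of the source that is suppressed in the compiled version), so your direct case analysis is exactly the sort of routine verification the authors left to the reader, and there is no alternative argument in the paper to contrast it with.
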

}

\begin{section}{The Kronecker product $s_{(d,d)} \ast s_{(d+k,d-k)}$} \label{sec:tworow}

Let $P$ indicate the set of partitions with four even parts or four odd parts and let
$\overline{P}$ be the set of partitions with 2 even parts and 2 odd parts (the
complement of $P$ in the set of partitions with at most 4 parts).

We let $\gamma P$ represent the set of
partitions $\lambda$ of $2d$ (the value of $d$ will be implicit in the
left hand side of the expression)
such that $\lambda - \gamma \in P$.  We also allow
$( \gamma \uplus \alpha) P =  \gamma P \cup \alpha P$.  Under all cases that we will
consider the partitions in $\gamma P$ and $\alpha P$ are disjoint.

\begin{theorem}\label{cleanest}
Let $\lambda$ be a partition of $2d$,
\begin{align} \label{cleaneq}
\left< s_{(d,d)} \ast s_{(d+k,d-k)}, s_\lambda \right> =
\sum_{i=0}^k \charf{ \lambda  \in (k{+}i,k,i) P }+ 
\sum_{i=1}^k \charf{ \lambda  \in (k{+}i{+}1,k{+}1,i) P}~.
\end{align}
\end{theorem}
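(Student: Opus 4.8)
The plan is to prove the formula by induction on $d$ (for each fixed $k$), using the recurrences of Lemma~\ref{lem:recurrence} to reduce the computation of $\langle s_{(d,d)} \ast s_{(d+k,d-k)}, s_\lambda\rangle$ to cases with fewer rows or smaller $d$. The key observation is that the recurrences split according to $\ell(\lambda)$, so I would verify that the right-hand side of \eqref{cleaneq} satisfies exactly the same recurrences. Let me think carefully about how this works.
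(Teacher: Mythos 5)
Your strategy is the right one, and it is in fact exactly the strategy of the paper's own proof: induction on $d$, with Lemma~\ref{lem:recurrence} supplying the recurrences and the argument split according to $\ell(\lambda)$. But what you have written is only the plan; the proof consists of actually verifying that the right-hand side of \eqref{cleaneq} satisfies those recurrences, and none of that verification is present. Concretely, you would need: (i) the base cases --- two of them, $d=k$ and $d=k+1$, since \eqref{fourcol} and \eqref{threecol} reduce $d$ by $2$ as well as by $1$ --- plus the case $\ell(\lambda)=2$, which is handled directly by \eqref{twocol} rather than by induction; (ii) the case $\ell(\lambda)=4$, which is quick because $\lambda-\gamma\in P$ if and only if $\lambda-(1^4)-\gamma\in P$ for every $\gamma$ of length at most $3$ appearing in the formula; and (iii) the case $\ell(\lambda)=3$, which is where essentially all of the work lies and which your outline does not engage at all.

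For (iii), expanding $s_{(1)}s_{\lambda-(1^3)}$ and $s_{(1)}^{\perp}s_{\lambda-(1^3)}$ and applying the induction hypothesis produces indicator terms $\charf{\lambda-\delta\in\gamma P}$ for seven shifts $\delta$ (the paper's sets $C_1$ and $C_2$), each summed over the $2k+1$ compositions $\gamma$ occurring in \eqref{cleaneq}. One must then (a) justify that degenerate shifts (when $\lambda$ has repeated parts or $\lambda_3=1$, so that $\lambda-\delta$ is not a partition) contribute $0$ and can be carried along formally; (b) cancel most terms pairwise via identities such as $\lambda-(2,2,2)-(k{+}i,k,i)=\lambda-(1,1,2)-(k{+}i{+}1,k{+}1,i)$; and (c) dispose of the surviving boundary terms using facts special to $\ell(\lambda)=3$, for instance $\charf{\lambda\in(k{+}2,k{+}2,2)P}=\charf{\lambda\in(k,k)P}$ and the equivalence $\lambda\in(r,s,s{+}1)P\Leftrightarrow\lambda\in(r{+}2,s{+}2,s{+}1)P$. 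Without these steps, the assertion that ``the right-hand side satisfies the same recurrences'' is precisely the content of the theorem, not a proof of it; so as it stands the proposal has a genuine gap --- the correct skeleton, but no body.
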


Because of the notation we have introduced, Theorem
\ref{cleanest} can easily be restated  in the following corollary.

\begin{corollary}\label{rugsum}
For $k \geq 0$, Theorem \ref{cleanest} is equivalent to:
if $k$ is odd then
\begin{align*}
s_{(d,d)} \ast s_{(d+k,d-k)} = 
\rug{ ((k,k) &\uplus (k+1,k,1) \uplus (k+2,k+1,1))P } \\+ &\sum_{i=1}^{(k-1)/2} \rug{ ((k+2i,k,2i) \uplus 
(k+2i+1,k+1,2i) \uplus\\
&(k+2i+1,k,2i+1) \uplus (k+2i+2,k+1,2i+1))P}
\end{align*}
and if $k$ is even then
\begin{align*}
s_{(d,d)} \ast s_{(d+k,d-k)} = 
\rug{ (k,k)P } + \sum_{i=1}^{k/2} \rug{ &((k+2i-1,k,2i-1) \uplus 
(k+2i,k+1,2i-1) \\&\uplus (k+2i,k,2i) \uplus (k+2i+1,k+1,2i))P}~.
\end{align*}
As a consequence, the coefficients of $s_{(d,d)} \ast s_{(d+k,d-k)}$ will all
be less than or equal to $\lfloor k/2 \rfloor + 1$.  
\end{corollary}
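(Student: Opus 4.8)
The plan is to show that the two displayed formulas are nothing more than a regrouping of the $2k+1$ indicator terms on the right-hand side of Theorem~\ref{cleanest}, together with the fact that within each group the sets $\gamma P$ are pairwise disjoint, so that each group is a genuine (coefficient $0$ or $1$) rug. Write $A_i = (k+i,k,i)$ for $0 \le i \le k$ and $B_i = (k+i+1,k+1,i)$ for $1 \le i \le k$, so that \eqref{cleaneq} reads $\langle s_{(d,d)} \ast s_{(d+k,d-k)}, s_\lambda\rangle = \sum_{i=0}^k \charf{\lambda \in A_i P} + \sum_{i=1}^k \charf{\lambda \in B_i P}$. First I would carry out the purely clerical check that the groupings in the statement reindex exactly this collection: for $k$ even the blocks are $\{A_0\}$ and $\{A_{2i-1}, B_{2i-1}, A_{2i}, B_{2i}\}$ for $1 \le i \le k/2$, while for $k$ odd they are $\{A_0, A_1, B_1\}$ and $\{A_{2i}, B_{2i}, A_{2i+1}, B_{2i+1}\}$ for $1 \le i \le (k-1)/2$; in either case every $A_i$ and every $B_i$ occurs in exactly one block and the number of blocks is $\lfloor k/2\rfloor + 1$.

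The heart of the argument is the disjointness of the sets inside a single block, and here I would reduce everything to parities. Since $P$ consists of the partitions whose four parts (padding with $0$) are all even or all odd, the condition $\lambda \in \gamma P$, i.e.\ $\lambda - \gamma \in P$, forces the parity vector $(\lambda_1,\lambda_2,\lambda_3,\lambda_4) \bmod 2$ to equal either $\gamma \bmod 2$ (the all-even case) or its bitwise complement $\overline{\gamma \bmod 2}$ (the all-odd case). Thus $\gamma P$ and $\alpha P$ are disjoint whenever $\{\gamma \bmod 2,\ \overline{\gamma \bmod 2}\} \cap \{\alpha \bmod 2,\ \overline{\alpha \bmod 2}\} = \emptyset$. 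I would then tabulate, for each block, the shift vectors modulo $2$ — remembering that the fourth coordinate is always $0$ — and check that the associated parity vectors together with their complements are distinct. For instance, when $k$ is even the generic block $\{A_{2i-1}, B_{2i-1}, A_{2i}, B_{2i}\}$ gives parity vectors $(1,0,1,0),(0,1,1,0),(0,0,0,0),(1,1,0,0)$, whose four complementary pairs exhaust eight distinct vectors of $\{0,1\}^4$; the $k$-odd blocks are entirely analogous. Hence the sets in a block have pairwise disjoint parity supports and are therefore pairwise disjoint.

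With disjointness in hand the conclusion is immediate. Each $\rug{(\gamma_1 \uplus \cdots \uplus \gamma_m)P}$ is then a genuine disjoint union, so the coefficient of any $s_\lambda$ in it equals $\charf{\lambda \in \gamma_1 P} + \cdots + \charf{\lambda \in \gamma_m P} \le 1$; summing these block contributions recovers exactly the right-hand side of \eqref{cleaneq}, which establishes the claimed equivalence, and since there are $\lfloor k/2 \rfloor + 1$ blocks the coefficient of every $s_\lambda$ is at most $\lfloor k/2 \rfloor + 1$.

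I expect the only real obstacle to be the parity bookkeeping in the middle step: one must treat the shift vectors consistently as length-four vectors and carry through both parities of $k$, and one must be sure that no overlap can sneak in through the partition (weakly decreasing, nonnegative) constraints on $\lambda - \gamma$. The latter worry is dispelled by noting that those constraints only shrink each set $\gamma P$, and so cannot create a common element where the parity argument already forbids one; the parity computation is therefore both necessary and sufficient for the disjointness.
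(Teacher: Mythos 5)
Your proposal is correct and follows essentially the same route as the paper: the paper treats the corollary as an immediate regrouping of the $2k+1$ indicator terms of Theorem~\ref{cleanest} into $\lfloor k/2\rfloor+1$ blocks, with the within-block disjointness of the sets $\gamma P$ asserted rather than proved. Your reindexing of the blocks matches the paper's grouping exactly, and your parity-vector argument simply makes explicit the disjointness that the paper leaves implicit, so there is nothing to correct.
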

\begin{remark}
Note that the upper bound on the coefficients that appear in the expressions
$s_{(d,d)} \ast s_{(d+k,d-k)}$ are sharp in the sense that for sufficiently large $d$,
there is a coefficient that will be equal to $\lfloor k/2 \rfloor + 1$.
\end{remark}
\begin{remark}
We note that this regrouping of the rugs is not unique but it is useful because there are partitions
that will fall in the intersection of each of these sets.  This set of rugs is
also not unique in that it is possible to describe other collections of sets of
partitions (e.g. see \eqref{eq:ke2}).
\end{remark}

We note that for $k=2$, the expression
stated in the introduction does not exactly follow this decomposition but it does follow
from some manipulation.

\begin{corollary} \label{cor:easycases}  For $d \geq 1$,
\begin{equation}\label{eq:ke0}s_{(d,d)} \ast s_{(d,d)} = \rug{ P } \end{equation}
\begin{equation}\label{eq:ke1}s_{(d,d)} \ast s_{(d+1,d-1)} = \rug{ \overline{P}}\end{equation}
and for $d \geq 2$,
\begin{equation}\label{eq:ke2}s_{(d,d)} \ast s_{(d+2,d-2)} = \rug{ P \cap \hbox{\rm no three parts are equal}} +
\rug{\hbox{\rm distinct partitions} }~.\end{equation}
\end{corollary}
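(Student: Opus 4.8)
The plan is to derive all three identities directly from Theorem~\ref{cleanest} by setting $k=0,1,2$ and simplifying the resulting sum of characteristic functions; no further representation theory beyond the theorem is needed. The one structural fact I would isolate first is a parity observation: since $\la \vdash 2d$ has even size, the number of odd parts among $(\la_1,\la_2,\la_3,\la_4)$ (padding with zeros) is even, hence $0$, $2$, or $4$. Thus $P$ (partitions with $0$ or $4$ odd parts) and $\overline{P}$ (exactly $2$ odd parts) genuinely partition the partitions of $2d$ of length at most $4$, which is what lets me read the right-hand sides as complementary pieces. (For $\ell(\la)>4$ both sides vanish, by Lemma~\ref{lem:recurrence} on the left and by the range of the notation $\gamma P$ on the right.)

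The second preliminary I would record is a workable criterion for $\la \in \gamma P$: writing $\la-\gamma$ for the vector difference, membership holds exactly when (i) the parities match, so that every entry of $\la-\gamma$ is even ($\la_j \equiv \gamma_j$ for all $j$) or every entry is odd ($\la_j \not\equiv \gamma_j$ for all $j$), and (ii) $\la-\gamma$ is weakly decreasing with non-negative entries, i.e.\ $\la_j-\la_{j+1}\geq \gamma_j-\gamma_{j+1}$ for each $j$ and $\la_4\geq \gamma_4$. The useful point is that every $\gamma$ appearing below has $\gamma_4=0$ and adjacent entries differing by at most $2$; given (i), the inequality in (ii) is then automatic unless $\gamma_j-\gamma_{j+1}=2$ (forcing $\la_j,\la_{j+1}$ of equal parity), in which case it sharpens to the strict inequality $\la_j>\la_{j+1}$. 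So each indicator $\charf{\la \in \gamma P}$ reduces to a parity condition on $\la$ together with a short list of strict inequalities that detect equal consecutive parts.

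For $k=0$ the theorem reduces to $\charf{\la\in(0,0,0)P}=\charf{\la\in P}$, which is \eqref{eq:ke0}. For $k=1$ it reduces to $\charf{\la\in(1,1,0)P}+\charf{\la\in(2,1,1)P}+\charf{\la\in(3,2,1)P}$; here I would check, via the criterion, that each summand forces $\la\in\overline{P}$, that the three $\gamma$ account (through their two parity branches) for all six parity patterns of $\overline{P}$ exactly once, and that in each case no adjacent gap equals $2$, so condition (ii) is automatic. Hence the sum equals $\charf{\la\in\overline{P}}$, giving \eqref{eq:ke1}.

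The case $k=2$ is the substantive one and is where the only real obstacle lies. The theorem gives the five terms $\charf{\la\in(2,2,0)P}$, $\charf{\la\in(3,2,1)P}$, $\charf{\la\in(4,2,2)P}$, $\charf{\la\in(4,3,1)P}$, $\charf{\la\in(5,3,2)P}$, which I would split by the parity of $\gamma$: the all-even shifts $(2,2,0)$ and $(4,2,2)$ are matched only by $\la\in P$, while $(3,2,1)$, $(4,3,1)$, $(5,3,2)$ force $\la\in\overline{P}$. For $\la\in\overline{P}$, exactly one of these three matches the parity of $\la$, and for it condition (ii) reduces precisely to all parts of $\la$ being distinct; so on $\overline{P}$ the total is $\charf{\la\text{ has distinct parts}}$. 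For $\la\in P$, condition (i) is automatic, and (ii) turns $\charf{\la\in(2,2,0)P}$ into $\charf{\la_2>\la_3}$ and $\charf{\la\in(4,2,2)P}$ into $\charf{\la_1>\la_2\text{ and }\la_3>\la_4}$; a short case analysis on the equal-part pattern of $\la$ shows their sum equals $\charf{\text{no three parts equal}}+\charf{\la\text{ distinct}}$. Combining the two cases with the disjointness of $P$ and $\overline{P}$ yields \eqref{eq:ke2}. The delicate bookkeeping is exactly here: it is the genuine overlap of the two indicators on $P$ (both equal $1$ precisely on the four-distinct-part members of $P$) that produces the coefficient $2$, and the boundary cases in which some $\la_j=0$ must be handled with care to confirm that the parity constraints still force the claimed strict inequalities.
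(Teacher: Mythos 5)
Your proposal is correct and takes essentially the same route as the paper: both specialize Theorem~\ref{cleanest} to $k=0,1,2$ and simplify the resulting characteristic functions, with the identical parity analysis handling $k=1$. For $k=2$, your direct case analysis on $P$ (where the two indicators $\charf{\lambda \in (2,2)P}$ and $\charf{\lambda \in (4,2,2)P}$ are simultaneously $1$ exactly on distinct partitions) is just the inclusion--exclusion identity the paper implements by adding and subtracting $\charf{\lambda \in (6,4,2)P}$, using $(2,2)P \cap (4,2,2)P = (6,4,2)P$.
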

\begin{proof}  Note that  \eqref{eq:ke0} is just a restatement of Corollary \ref{rugsum}
in the case that $k=0$ and is \cite[Theorem I.1]{GWXZ1}.

First, by Theorem \ref{cleanest} we note that 
$$\kroncoeff_{(d,d)(d+1,d-1)\lambda} =
\charf{ \lambda \in (1,1)P} + \charf{ \lambda \in (2,1,1)P} + \charf{ \lambda \in (3,2,1)P}~.$$
If $\lambda \in P$, then $\lambda - (1,1)$, $\lambda - (2,1,1)$, $\lambda - (3,2,1)$ are all
not in $P$ so each of the terms in that expression are $0$.
If $\lambda$ is a partition with two even parts and two odd parts (that is, $\lambda \in
\overline{P}$)
then either $\lambda_1 \equiv
\lambda_2$ and $\lambda_3 \equiv \lambda_4~(mod~2)$ or $\lambda_1 \equiv
\lambda_3$ and $\lambda_2 \equiv \lambda_4~(mod~2)$ or $\lambda_2 \equiv
\lambda_3$ and $\lambda_1 \equiv \lambda_4~(mod~2)$.  In each of these three cases,
exactly one of the expressions $\charf{ \lambda \in (1,1)P}$, 
$\charf{ \lambda \in (2,1,1)P}$ or $\charf{ \lambda \in (3,2,1)P}$ will be $1$ and the other
two will be zero.  Therefore,
$$\sum_{\lambda \vdash 2d} \kroncoeff_{(d,d)(d+1,d-1)\lambda} s_\lambda = 
\rug{ \overline{P} }~.$$

We also have by Theorem \ref{cleanest} that
\begin{align*}C_{(d,d)(d+2,d-2)\lambda} = &\charf{ \lambda \in (2,2)P}
+\charf{ \lambda \in (4,2,2)P}-\charf{ \lambda \in (6,4,2)P}\\
&+\charf{ \lambda \in (3,2,1)P}
+\charf{ \lambda \in (4,3,1)P}+\charf{ \lambda \in (5,3,2)P}+\charf{ \lambda \in (6,4,2)P}~.
\end{align*}

Any distinct partition that is in $P$ is also in $(6,4,2)P$.  Every distinct
partition that is in $\overline{P}$ will have two odd parts and two even parts and
will be in one of $(3,2,1)P$, 
$(4,3,1)P$, or $ (5,3,2)P$ depending on which of $\lambda_2, \lambda_1$ or $\lambda_3$
is equal to $\lambda_4$ modulo 2 (respectively).  Therefore, we have
\begin{equation}
\rug{\hbox{ distinct partitions }} = \rug{ ((3,2,1) \uplus (4,3,1) \uplus (5,3,2) \uplus (6,4,2)) P}~.
\end{equation}

If $\lambda \in (2,2)P \cap (4,2,2)P$, then $\lambda_2 \geq \lambda_3+2$ because $\lambda \in (2,2)P$,
and $\lambda_1 \geq \lambda_2 + 2$
  and $\lambda_3 \geq \lambda_4+2$ because $\lambda \in (4,2,2)P$,
 so $\lambda \in (6,4,2)P$.  Conversely, one verifies that in fact
 $(2,2) P \cap (4,2,2) P = (6,4,2) P$, hence
 $$\rug{ (2,2)P \cup (4,2,2) P } = \rug{(2,2)P} + \rug{ (4,2,2) P } - \rug{ (6,4,2) P}~.$$
If $\lambda \in P$ does not have three equal parts, then either $\lambda_2 > \lambda_3$ or 
$\lambda_1 > \lambda_2$ and $\lambda_3 > \lambda_4$.  Therefore, $\lambda \in (2,2)P \cup
(4,2,2)P$ and  hence $(2,2)P \cup (4,2,2) P = P \cap \hbox{ no three parts are equal }$.
\end{proof}

\begin{proof} (of Theorem \ref{cleanest})
Our proof proceeds by induction on the value of $d$
and uses the Lemma \ref{lem:recurrence}.  We will consider
two base cases because \eqref{threecol} and \eqref{fourcol} give
recurrences for two smaller values
of $d$.  The exception for this is of course that $\lambda$ is a partition of length $2$
since it is easily verified that the two sides of  \eqref{cleaneq} agree since the
only term on the right hand side of the equation that can be non-zero is $\charf{ \lambda \in 
(k,k)P}$.

When $d=k$ and we have that the left hand side of \eqref{cleaneq}
is $\left< s_{(k,k)} \ast s_{(2k)}, s_\lambda \right>$, which is $1$ if $\lambda = (k,k)$
and $0$ otherwise.  On the right hand side of \eqref{cleaneq} we have
that $\charf{ \lambda  \in (k,k)P }$ is $1$ if and only if $\lambda = (k,k)$
and all other terms are $0$ and hence the two expressions agree.

If $d=k+1$, then $s_{(k+1,k+1)} \ast s_{(2k-1,1)} = s_{(k+1,k,1)} + s_{(k+2,k)}$.  Notice
that the only partitions $\lambda$ of $2k+2$ such that the indicator functions on the
right hand side of \eqref{cleaneq} can be satisfied are $\charf{ \lambda  \in (k,k)P }$
when $\lambda = (k{+}2,k)$ and $\charf{ \lambda  \in (k{+}1,k,1)P }$ when $\lambda = (k{+}1,k,1)$.
All others must be $0$ because the partitions that are subtracted off are larger than $2k{+}2$.

Now assume that \eqref{cleaneq} holds for all values strictly smaller than $d$.  If 
$\ell(\lambda) = 4$, then $\lambda - \gamma \in P$ if and only if $\lambda - \gamma - (1^4) \in P$
for all partitions $\gamma$ of length less than or equal to $3$ so
\begin{align*}
\left< s_{(d,d)} \ast s_{(d+k,d-k)}, s_\lambda \right>
&= \left< s_{(d-2,d-2)} \ast s_{(d+k-2,d-k-2)}, s_{\lambda - (1^4)} \right>\\
&= \sum_{i=0}^k \charf{ \lambda{-}(1^4) \in (k{+}i,k,i)P }+ \sum_{i=1}^k \charf{ \lambda{-}(1^4) \in 
(k{+}i{+}1,k{+}1,i)P }\\
&= \sum_{i=0}^k \charf{ \lambda  \in (k{+}i,k,i)P }+ 
\sum_{i=1}^k \charf{ \lambda  \in (k{+}i{+}1,k{+}1,i)P }~.
\end{align*}

So we can now assume that $\ell(\lambda) = 3$.  
By  \eqref{threecol} we need to consider the coefficients of the form
$\left< s_{(d,d)} 
\ast s_{(d+k,d-k)}, s_\mu \right>$ where
$s_\mu$ appears in the expansion of 
$s_{(1)} s_{\lambda - (1^3)}$ 
or $s_{(1)}^\perp s_{\lambda-(1^3)}$.
If $\lambda$ has three distinct parts and $\lambda_3 \geq 2$
then $\mu  = \lambda - \delta$ where 
$\delta \in \{  (1,1,0), (1,0,1), (0, 1, 1),$ 
$(1,1,1,-1), (2,1,1), (1,2,1), (1,1,2) \}$
and we can assume by induction that these expand into terms
of the form $\pm\charf{ \lambda - \delta - \gamma \in P }$ where $\gamma$ is a
partition.  However, if $\lambda$ is not distinct or
$\lambda_3 = 1$, then for some $\delta$ in the set,  
$\lambda {-} \delta$ will not be a partition and 
$\charf{ \lambda {-} \delta  \in \gamma P}$ will be $0$ 
and we can add these terms to our formulas so that we can treat
the argument uniformly and not have to consider different 
possible $\lambda$.

One obvious reduction we can make to treat the expressions more
uniformly is to note that $\charf{ \lambda {-} (1,1,1,{-}1)  \in \gamma P} = 
\charf{ \lambda {-} (2,2,2)  \in \gamma P}$.

Let $C_1 = \{  (1,1,0), (1,0,1), (0, 1, 1), (2,2,2)\}$ and
$C_2 = \{  (2,1,1), (1,2,1), (1,1,2)\}$.  By the induction hypothesis  and 
\eqref{threecol} we have
that $\left< s_{(d,d)} \ast s_{(d+k,d-k)}, s_\lambda \right>$ is equal to
\begin{align*}
&\sum_{\delta \in C_1}
\Big( \sum_{i=0}^k \charf{ \lambda {-} \delta  \in (k{+}i,k,i)P }+ 
 \sum_{i=1}^k \charf{ \lambda {-} \delta  \in (k{+}i{+}1,k{+}1,i)P } \Big)\\
&- \sum_{\delta \in C_2}\Big( \sum_{i=0}^k \charf{ \lambda {-} \delta  \in (k{+}i,k,i)P }+ 
\sum_{i=1}^k \charf{ \lambda {-} \delta  \in (k{+}i{+}1,k{+}1,i)P } \Big)~.
\end{align*}

We notice that $\lambda - (2,2,2) - (k{+}i,k,i) = \lambda - (1,1,2) - (k{+}i{+}1,k{+}1,i)$
and $\lambda - (2,1,1) - (k{+}i,k,i) = \lambda - (1,0,1) - (k{+}i{+}1,k{+}1,i)$ and
$\lambda - (1,2,1) - (k{+}i,k,i) = \lambda - (0,1,1) - (k{+}i{+}1,k{+}1,i)$ so the corresponding
terms always cancel.  With this reduction, we are left with the terms
\begin{align*}
&\sum_{\delta \in C_3}
 \sum_{i=0}^k \charf{ \lambda {-} \delta  \in (k{+}i,k,i)P }
+ \sum_{\delta \in C_4}\sum_{i=1}^k \charf{ \lambda {-} \delta  \in (k{+}i{+}1,k{+}1,i)P } \\
&- \sum_{i=0}^k \charf{ \lambda {-} (1,1,2)  \in (k{+}i,k,i)P}
- \sum_{\delta \in C_5} \sum_{i=1}^k \charf{ \lambda {-} \delta  \in (k{+}i{+}1,k{+}1,i)P } \\
&- \charf{ \lambda {-} (1,2,1)  \in (k,k)P } 
- \charf{ \lambda {-} (2,1,1)  \in (k,k)P }
+ \charf{ \lambda{-}(2,2,2) \in (k,k)P }
\end{align*}
where $C_3 = \{ (0,1,1), (1,0,1), (1,1,0) \}$, $C_4 = \{ (1,1,0), (2,2,2) \}$, $C_5 = \{ (2,1,1), (1,2,1) \}$.

Next we notice that 
$\lambda - (1,1,2) - (k{+}i,k,i) = \lambda - (0,1,1) - (k{+}i+1,k,i{+}1)$,
$\lambda - (2,1,1) - (k{+}i{+}1,k{+}1,i) = \lambda - (1,1,0) - (k{+}i{+}2,k{+}1,i{+}1)$, and 
$\lambda - (2,2,2) - (k{+}i{+}1,k{+}1,i) = \lambda - (1,2,1) - (k{+}i{+}2,k{+}1,i{+}1)$.  Then by canceling
these terms and joining the compositions that are being subtracted off in the sum, these
sums reduce to the following expression.


\begin{align*}
&\sum_{i=0}^k \charf{ \lambda  \in (k{+}i{+}1,k,i{+}1)P }+
 \sum_{i=0}^k \charf{ \lambda  \in (k{+}i{+}1,k{+}1,i)P }\\
&+\charf{ \lambda  \in (k,k{+}1,1) P}
+\charf{ \lambda  \in (k{+}3,k{+}2,1) P} \\
&+\charf{ \lambda  \in (2k{+}3,k{+}3,k{+}2)P}
+ \charf{ \lambda\in (k{+}2,k{+}2,2) P }\\
&-\charf{ \lambda  \in (2k{+}1,k{+}1,k{+}2) P}
- \charf{ \lambda  \in (2k{+}3,k{+}2,k{+}1) P}\\
&- \charf{ \lambda  \in (k{+}3,k{+}3,2)P }
- \charf{ \lambda  \in  (k{+}1,k{+}2,1)P} \\
&- \charf{ \lambda  \in (k{+}2,k{+}1,1) P}~.
\end{align*}

Since $\ell(\lambda)=3$, 
if $\lambda - (a,b) \in P$, then $\lambda_1-a \geq \lambda_2-b \geq \lambda_3 \geq 2$, 
which is true if and only if $\lambda_1-a-2 \geq \lambda_2-b-2 \geq \lambda_3-2 \geq 0$.  
In particular,
$\charf{ \lambda \in (k{+}2,k{+}2,2) P } = \charf{ \lambda \in (k,k) P}$ and 
$\charf{ \lambda \in (k{+}3,k{+}3,2) P } = \charf{ \lambda  \in (k{+}1,k{+}1)P }$.

By verifying a few conditions it is easy to check that $\lambda  \in (r,s,s+1)P$ if
and only if $\lambda \in (r+2,s+2,s+1)P $ and similarly $\lambda  \in (s,s+1,r)P$
if and only if $\lambda  \in (s+2,s+1,r) P$.  With this relationship, we have the equivalence
of 
$\charf{ \lambda  \in (k,k{+}1,1) P} = \charf{ \lambda  \in (k{+}2,k{+}1,1) P}$,
$\charf{ \lambda  \in (k{+}1,k{+}2,1) P } = \charf{ \lambda  \in (k{+}3,k{+}2,1) P}$,
$\charf{ \lambda  \in (2k{+}1,k{+}1,k{+}2) P } = \charf{ \lambda  \in (2k{+}3,k{+}3,k{+}2) P}$,
and
$ \charf{ \lambda  \in (2k{+}1,k,k{+}1)P } = \charf{ \lambda \in (2k{+}3,k{+}2,k{+}1)P }$.
Each of these appear in the expression above.  After we cancel these terms the expression
reduces to
\begin{align*}
&\sum_{i=0}^{k-1} \charf{ \lambda  \in (k{+}i{+}1,k,i{+}1) P}+
 \sum_{i=1}^k \charf{ \lambda \in (k{+}i{+}1,k{+}1,i) P }+ \charf{ \lambda \in (k,k) P}~.
\end{align*}

This concludes the proof by induction on $d$ since we know the identity holds for each partition
$\lambda$ of length $2, 3$ or $4$.
\end{proof}
\end{section}

\begin{section}{The Kronecker product $s_{(d,d)} \ast s_{(2d-\! k,1^k)}$}\label{sec:hookexpr}

We could not visit the problem of the Kronecker product with $s_{(d,d)}$
without considering formulas that can be derived from previously known results
for Kronecker products with a two-row Schur function \cite{BOrellana2, RemmelWhitehead, Rosas}.
Extracting a completely positive formula is somewhat of a challenge, but possible
in this case because we have chosen to restrict our attention to a particular two-row shape.  Consider
the following result of Rosas \cite{Rosas}.

    \begin{proposition} \label{Rosas1}\cite[Theorem 4]{Rosas}
        Let $\mu, \nu$,\ and $\lambda$ be partitions of n, where $\mu=(\mu_1,\mu_2)$ 
        is a two-row shape, and $\nu=(\nu_2,1^{\nu_1})$ is a hook.  
        \begin{enumerate} \label{hook form}
            \item If $\lambda$ is a one-row shape, then 
            $\kroncoeff_{\mu\nu\lambda}\! = \!(\!( \mu \! \!= \!\! \nu \! )\!)$.
            \item If $\lambda=(\lambda_1,1^m)$ is a hook, then
                $$\kroncoeff_{\mu\nu\lambda}=\charf{ \mu_2 \! - \! 1 \leq m \leq \mu_1  \! }
                    \charf{ m \! = \! \nu_1  \! }
                   + \charf{ 2\mu_2 \leq m \! + \! \nu_1 \!+ \! 1 \leq 2\mu_1  \! }
                    \charf{ |m \! - \! \nu_1| \leq 1  \! }.$$

            \item If $\lambda=(\lambda_1,\lambda_2,2^{m_2},1^{m_1})$ is a double hook with $\lambda_1 \!-\! \lambda_2 \leq m_1$, then
                \begin{eqnarray*}
                   \kroncoeff_{\mu\nu\lambda} &=&  \charf{ \lambda_2 \leq \mu_2 \! - \! m_2 \leq \lambda_1 \! }
                        \charf{ 0 \leq \nu_1 \! - \! m_1 \! - \! 2m_2 \leq 3 \! }\\
                   &&  \charf{ \lambda_2 \leq \mu_2 \! - \! m_2 \! - \! 1\leq \lambda_1 \! }
                        \charf{ 1 \leq \nu_1 \! - \! m_1 \! - \! 2m_2 \leq 2 \! }\\
                   && \charf{ \lambda_2 \leq \mu_2 \! - \! m_2 \! + \! 1 \leq \lambda_1 \! }
                        \charf{ 1 \leq \nu_1 \! - \! m_1 \! - \! 2m_2 \leq 2 \! }\\
                   && - \charf{ \lambda_2 \! + \! m_2 \! + \! m_1 \! =\! \mu_2}
                        \charf{ 1 \leq \nu_1 \! - \! m_1 \! - \! 2m_2 \leq 2 \! }.
                \end{eqnarray*}
            \item If $\lambda$ is not contained in a double hook, then $\kroncoeff_{\mu\nu\lambda}=0$.
        \end{enumerate}
    \end{proposition}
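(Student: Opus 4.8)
The plan is to compute the product $s_\mu \ast s_\nu$ outright and read off the coefficient of $s_\lambda$, taking full advantage of the simple shapes of both factors. By the symmetry $\kroncoeff_{\mu\nu\lambda}=\kroncoeff_{\mu\lambda\nu}$ it suffices to find $\langle s_\mu \ast s_\nu, s_\lambda\rangle$. Since $\mu=(\mu_1,\mu_2)$ is a two-row shape, the Jacobi--Trudi identity gives $s_\mu = h_{\mu_1}h_{\mu_2} - h_{\mu_1+1}h_{\mu_2-1}$, where $h_n=s_{(n)}$, so everything reduces to understanding products $(h_a h_b)\ast s_\nu$. Because $s_{(a)}=h_a$ is the identity for $\ast$ in degree $a$ (the special case $\kroncoeff_{\lambda\mu(n)}=\charf{\lambda=\mu}$), the coproduct form of \eqref{multperpkron} yields the standard reduction
\begin{equation*}
(h_a h_b)\ast s_\nu = \sum_{\substack{\alpha\subseteq\nu\\ |\alpha|=a}} s_\alpha\, s_{\nu\slash\alpha},
\end{equation*}
so that $\kroncoeff_{\mu\nu\lambda}$ becomes an alternating sum of Littlewood--Richardson coefficients.

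The decisive simplification comes from $\nu=(\nu_2,1^{\nu_1})$ being a hook. Every $\alpha\subseteq\nu$ is again a hook $\alpha=(c,1^e)$ with $c\le\nu_2$ and $e\le\nu_1$, and the complementary skew $\nu\slash\alpha$ is the disjoint union of a horizontal strip of length $\nu_2-c$ and a vertical strip of length $\nu_1-e$, whence $s_{\nu\slash\alpha}=h_{\nu_2-c}\,e_{\nu_1-e}$ with $e_n=s_{(1^n)}$. Thus each summand is $s_{(c,1^e)}\,h_{\nu_2-c}\,e_{\nu_1-e}$, a hook multiplied by a single row and a single column, whose Schur expansion is given explicitly by two applications of the Pieri rule. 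Since adjoining a horizontal strip and then a vertical strip to a hook can only produce shapes contained in a double hook, every $s_\lambda$ that occurs has $\lambda$ contained in a double hook. This proves part (4) at once and restricts the remaining analysis to $\lambda$ a one-row shape, a hook, or a double hook.

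It then remains to collect, for each of these three shapes, the signed contributions from the two Jacobi--Trudi terms and verify that they equal the stated indicator expressions. The one-row case is immediate from $\kroncoeff_{\mu\nu(n)}=\charf{\mu=\nu}$, so no Pieri computation is needed there. For the hook and double-hook cases one tracks exactly which Pieri additions can reach the prescribed $\lambda$: the arm $\lambda_1$, the leg governed by $m_1$, and the count $m_2$ of length-two rows each impose a linear constraint, which is the source of the inequalities such as $\lambda_2\le\mu_2-m_2\le\lambda_1$ and $0\le\nu_1-m_1-2m_2\le 3$. The negative term $-h_{\mu_1+1}h_{\mu_2-1}$ is what generates the cancellations that reduce the naive count to $0$ or $1$ and produces the final subtracted characteristic function in part (3).

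The hard part will be precisely this bookkeeping: matching a signed sum of Pieri/Littlewood--Richardson contributions against the explicit characteristic-function formulas while correctly handling the degenerate boundary cases --- when $\alpha$ collapses to a single row ($e=0$) or single column ($c=0$), when the two strips of $\nu\slash\alpha$ abut at the corner, and when the target double hook is itself degenerate (for instance $m_2=0$ or $\lambda_1=\lambda_2$). The hypothesis $\lambda_1-\lambda_2\le m_1$ in part (3) is exactly the condition guaranteeing that a vertical strip long enough to build the leg of $\lambda$ is available, and keeping this containment condition under control throughout the sign analysis is the delicate point of the argument.
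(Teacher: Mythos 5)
First, a point of reference: the paper does not prove this proposition at all --- it is quoted as \cite[Theorem 4]{Rosas}, so there is no internal proof to compare against, and your argument has to stand on its own as a proof of Rosas's theorem. Your setup is correct as far as it goes: the Jacobi--Trudi expansion $s_\mu = h_{\mu_1}h_{\mu_2}-h_{\mu_1+1}h_{\mu_2-1}$, the identity $(h_a h_b)\ast s_\nu=\sum_{\alpha\subseteq\nu,\,|\alpha|=a}s_\alpha s_{\nu/\alpha}$ (which does follow from \eqref{multperpkron} together with $h_n$ being the Kronecker identity in degree $n$), and the observation that for a hook $\nu$ every $\alpha\subseteq\nu$ is a hook with $s_{\nu/\alpha}=h_{\nu_2-c}\,e_{\nu_1-e}$, are all valid. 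The deduction that every shape occurring in $s_{(c,1^e)}h_{\nu_2-c}e_{\nu_1-e}$ lies inside a double hook is also sound, so parts (1) and (4) are genuinely established by what you wrote.

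The gap is that parts (2) and (3) --- which are the entire substance of the proposition --- are never derived; you describe the computation ("one tracks exactly which Pieri additions can reach the prescribed $\lambda$", "the hard part will be precisely this bookkeeping") but do not perform it. This bookkeeping is not a routine afterthought. For a fixed double hook $\lambda$, each Jacobi--Trudi term contributes a count of pairs (hook $\alpha\subseteq\nu$, ways of adding a horizontal strip of size $\nu_2-c$ and a vertical strip of size $\nu_1-e$ to $\alpha$ to reach $\lambda$); these counts are typically larger than $1$, and the stated $0/1$-type answer, the subtracted term $-\charf{\lambda_2+m_2+m_1=\mu_2}\charf{1\le\nu_1-m_1-2m_2\le2}$, and the precise role of the hypothesis $\lambda_1-\lambda_2\le m_1$ emerge only after the cancellation between the $h_{\mu_1}h_{\mu_2}$ and $h_{\mu_1+1}h_{\mu_2-1}$ contributions is carried out explicitly --- say by parametrizing the ways to reach $\lambda$ and exhibiting an injection between positive and negative contributions whose unmatched terms are exactly the stated indicator functions. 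Until that is done, the characteristic-function formulas are asserted rather than proved, so what you have is a correct strategy together with proofs of the two easy parts, not a proof of the proposition.
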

    
\begin{remark} In the case that $\lambda$ is a double hook and
$\lambda_1 - \lambda_2>m_1$, then this formula applies by using the
transpose symmetry of the Kronecker coefficients
$\kroncoeff_{\mu\nu\lambda} = \kroncoeff_{\mu\nu'\lambda'}$.
\end{remark}

From this result we can arrive at a useful recursive formula.

\comment{
\begin{remark}\label{rem:transposesymmetry} We see from Theorem~\ref{Rosas1} part 1 that $\kroncoeff_{\mu  \nu\: (n) }=0$ unless $\nu=\mu$, so if $\lambda=(1,1,\ldots,1)$ then $\lambda'=(n)$ and
$$ \kroncoeff_{\mu\nu\lambda}=\kroncoeff_{\mu \nu' \: (n)}=\!(\!( \mu \! \!= \!\! \nu'  )\!).$$\end{remark}

We now aply apply this and Theorem~\ref{Rosas1}  to the case when $\mu=(d,d)$, and $\nu=(2d-k,1^k)$.

\begin{proposition} \label{ddcase}
    Let $\lambda$, $\mu$, and $\nu$ be partitions of $2d$, where $1\leq d$ and $0 \leq k \leq 2d  - \! 1$, $\nu=(2d\! -\! k,1^k)$, and $\mu=(d,d)$.
    \begin{enumerate}
        \item
            If $\lambda=(\lambda_1,1^m)$ is a hook, then
            $$\kroncoeff_{\mu\nu\lambda}=\charf{ d \! - \! 1 \! \leq \! m \! \leq d }
                \charf{ d \! - \! 1 \! \leq \! k \! \leq d }.$$
        \item
            Let $\lambda=(\lambda_1,\lambda_2,2^{m_2},1^{m_1})$.  If $\lambda$ is a double hook  or $\nu$ is a one-row shape or a one-column shape, then
        \begin{eqnarray*}
            \kroncoeff_{\mu\nu\lambda} & =& \charf{ 1 \! \leq \! k \! - \! m_1 \! - \! 2m_2 \! \leq \! 2  } \!
            \rug{ \charf{d \!  = \! \lambda_1\! +\! m_2 }
            \charf{1 \! \leq \!  m_1 }
            + \charf{d \! \pm \! 1 \! = \! \lambda_1\! +\! m_2 } } \\
            && +  \charf{ k \! - \! m_1 \! - \! 2m_2 \! = \! 0 \textrm{ or } 3 }
            \charf{d\! =\! \lambda_1\! +\! m_2 }.
        \end{eqnarray*}
        \item If $\lambda$ is a one-row shape, one-column shape, or is not contained in a double hook, then $\kroncoeff_{\mu\nu\lambda}=0$.
    \end{enumerate}
\end{proposition}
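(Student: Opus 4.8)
The plan is to derive all three parts by specializing Proposition~\ref{Rosas1}, substituting $\mu_1=\mu_2=d$ and $\nu_2=2d-k$, $\nu_1=k$, and then simplifying the resulting indicator expressions using the single relation coming from $\lambda\vdash 2d$. Writing $\lambda=(\lambda_1,\lambda_2,2^{m_2},1^{m_1})$, the constraint $|\lambda|=2d$ reads $\lambda_1+\lambda_2+2m_2+m_1=2d$, equivalently
\[
\mu_2-m_2 \;=\; d-m_2 \;=\; \frac{\lambda_1+\lambda_2+m_1}{2}.
\]
This identity is the engine of the argument: it pins the quantity $\mu_2-m_2$ appearing in Rosas' range conditions to a half-integer average of $\lambda_1,\lambda_2,m_1$, forcing most of her inequalities to collapse to equalities. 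Throughout I abbreviate $t=k-m_1-2m_2=\nu_1-m_1-2m_2$.

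For part (1) I would insert $\mu_1=\mu_2=d$ and $\nu_1=k$ into the hook case Proposition~\ref{Rosas1}(2). The first indicator product becomes $\charf{d-1\leq m\leq d}\charf{m=k}$ and the second becomes $\charf{m+k=2d-1}\charf{|m-k|\leq1}$; since $m+k=2d-1$ is odd these two products have disjoint support, supported respectively on $\{(d{-}1,d{-}1),(d,d)\}$ and $\{(d,d{-}1),(d{-}1,d)\}$. Their union is exactly $\{d-1,d\}\times\{d-1,d\}$, so the sum equals $\charf{d-1\leq m\leq d}\charf{d-1\leq k\leq d}$, as claimed.

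Part (2) is the heart of the proof and where I expect the real work to lie. For a double hook with $\lambda_1-\lambda_2\leq m_1$, Proposition~\ref{Rosas1}(3) applies directly; setting $\epsilon=m_1-(\lambda_1-\lambda_2)\geq0$ (necessarily even), the displayed relation gives $\mu_2-m_2=\lambda_1+\epsilon/2\geq\lambda_1$. Consequently $\charf{\lambda_2\leq\mu_2-m_2\leq\lambda_1}$ reduces to $\charf{\mu_2-m_2=\lambda_1}=\charf{d=\lambda_1+m_2}$, the shifted indicator $\charf{\lambda_2\leq\mu_2-m_2-1\leq\lambda_1}$ reduces to $\charf{d=\lambda_1+m_2}\charf{m_1\geq1}+\charf{d-1=\lambda_1+m_2}$, the third range indicator $\charf{\lambda_2\leq\mu_2-m_2+1\leq\lambda_1}$ vanishes identically (its argument exceeds $\lambda_1$), and the subtracted term is $\charf{d=\lambda_1+m_2}$ times $\charf{1\leq t\leq2}$. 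Bookkeeping these against $\charf{0\leq t\leq3}$ and $\charf{1\leq t\leq2}$, and using $\charf{0\leq t\leq3}-\charf{1\leq t\leq2}=\charf{t=0\text{ or }3}$, collapses everything to the claimed expression except that it yields only $\charf{d-1=\lambda_1+m_2}$ in place of the full $\charf{d\pm1=\lambda_1+m_2}$. The missing summand $\charf{d+1=\lambda_1+m_2}$ is supplied by the complementary case $\lambda_1-\lambda_2>m_1$, which I would treat via the transpose symmetry $\kroncoeff_{\mu\nu\lambda}=\kroncoeff_{\mu\nu'\lambda'}$: one checks $\lambda'$ is again a double hook satisfying $\lambda'_1-\lambda'_2\leq m'_1$, that $t\mapsto 3-t$, and that $\charf{d=\lambda'_1+m'_2}=\charf{d=\lambda_1+m_2}$ and $\charf{d-1=\lambda'_1+m'_2}=\charf{d+1=\lambda_1+m_2}$ by the same partition identity. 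The decisive observation gluing the two regimes into one formula is that each ``extra'' indicator is identically zero where it is not produced: $\charf{d+1=\lambda_1+m_2}=0$ whenever $\lambda_1-\lambda_2\leq m_1$, while $\charf{d=\lambda_1+m_2}$ and $\charf{d-1=\lambda_1+m_2}$ both vanish whenever $\lambda_1-\lambda_2>m_1$. Finally the degenerate shapes $\nu=(2d)$ (i.e.\ $k=0$) and $\nu=(1^{2d})$ (i.e.\ $k=2d-1$), for which Rosas' hook hypothesis fails, I would dispatch directly from $\kroncoeff_{\mu\nu\lambda}=\charf{\lambda=(d,d)}$ and $\kroncoeff_{\mu\nu\lambda}=\charf{\lambda=(2^d)}$ (both from the $(n)$- and $(1^n)$-symmetries), verifying that the right-hand side of the formula collapses to the same.

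Part (3) then follows immediately: ``not contained in a double hook'' is Proposition~\ref{Rosas1}(4), while the one-row and one-column cases for $\lambda$ come from $\kroncoeff_{\mu\nu(n)}=\charf{\mu=\nu}$ and $\kroncoeff_{\mu\nu(1^n)}=\charf{\mu=\nu'}$ together with the fact that $(d,d)$ is (for $d\geq2$) neither a hook nor the transpose of one. The main obstacle throughout is the part~(2) bookkeeping: correctly collapsing Rosas' three overlapping range indicators and one correction term through the single partition relation, and verifying that the direct and transposed regimes patch together into the one symmetric expression without double counting.
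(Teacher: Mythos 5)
Your proposal is correct and follows essentially the same route as the paper's own proof: specializing Proposition~\ref{Rosas1} with $\mu_1=\mu_2=d$, $\nu_1=k$, handling the degenerate $\nu=(2d)$ and $\nu=(1^{2d})$ cases directly, splitting the double-hook case into the regimes $\lambda_1-\lambda_2\leq m_1$ (direct substitution) and $\lambda_1-\lambda_2>m_1$ (transpose symmetry $\kroncoeff_{\mu\nu\lambda}=\kroncoeff_{\mu\nu'\lambda'}$), and disposing of part (3) via the $(n)$/$(1^n)$ symmetries. Your $\epsilon$-parametrization and the observation that each ``extra'' indicator vanishes identically outside its own regime are just cleaner bookkeeping for the same cancellations the paper carries out term by term.
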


\begin{remark}\label{rem:tworowrosas} In Proposition~\ref{ddcase} part 2 when $\nu$ is a one-row shape or one-column shape, $\lambda=(\lambda_1,\lambda_2,2^{m_2},1^{m_1})$ need not be a double hook, that is if $\lambda_2=1$, then $m_2=0$, and if $\lambda_2=0$, then $m_2=m_1=0$.\end{remark}

\begin{proof} Let $1\leq k \leq 2d\!-\!2$ and $2\leq d$.  Then $\nu$ is a hook, so we apply Theorem \ref{Rosas1} to the three cases outlined in Proposition ~\ref{ddcase}.
    \begin{enumerate}
        \item Let $\lambda=(\lambda_1,1^m)$ be a hook.  From Theorem \ref{Rosas1} part 2 with $\mu_1=\mu_2=d$ and $\nu_1=k$, we have that
        \begin{eqnarray*}
        \kroncoeff_{\mu\nu\lambda} &=& \Big. \charf{ d\! -\! 1 \!\leq \! k \!\leq\! d } \!
            \charf{ k\! =\! m }
            + \charf{ 2d\! = k \! + \! m \! + \! 1 }
             \charf{ |k\! -\! m|\! \leq \! 1 } \Big.
\\
         &=& \Big. \charf{ d\! -\! 1 \!\leq \! k \!\leq\! d } \!
            \charf{ k\! =\! m }
            +  \charf{ 2d\!=\! 2m \! + \! (k\!-\!m)\!+\!1 } \!\!\Big. \!\!
                \sum_{ -1 \leq a \leq 1 } \!\!\!\! \charf{ k\! -\! m\!= \! a }
\\
        &=& \Big[ \! \charf{ d\! -\! 1 \! = \! k } \! + \! \charf{ d \! = \! k }\! \Big] \!
            \charf{ k\! =\! m }
            + \charf{ k\! -\! m\!= \! 0 } \! \charf{ 2d\!=\! 2m \! + \! 1 }\\
        &&  + \Big. \charf{ k\! -\! m\!= \! 1 } \! \charf{ 2d\!=\! 2m \! + \! 2 }
            + \charf{ k\! -\! m\!= \! - \! 1 } \! \charf{ 2d\!=\! 2m } \Big.
\\
        &=& \Big. \charf{ d\! -\! 1 \! = \! k } \! \charf{ d\! -\! 1\! =\! m }
            + \! \charf{ d \! = \! k } \! \charf{ d\! =\! m } \Big.\\
         && + \charf{ k\! = \! d } \! \charf{ d\! - \! 1\! =\! m }
            + \charf{ k\! = \! d \! - \! 1 } \! \charf{ d\!=\! m }
\\
        & =&\bigg. \Big[ \charf{ d \! = \! m } \! +  \! \charf{ d \! - \! 1 \! = \! m }\Big] \!\bigg.
            \Big[ \charf{ d \! = \! k } \! +  \! \charf{ d \! - \! 1 \! = \! k }\Big].
        \end{eqnarray*}

        \item We begin by showing the formula from Theorem \ref{Rosas1} part 2 gives the required result when $\nu$ is a one-row shape or one-column shape, and then show it when $\lambda$ is a double hook.

        Let $k=0$ and $1\leq d$. Then $\nu=(2d)$ is a one-row shape, so from Remark~\ref{rem:transposesymmetry} and the symmetry of Kronecker coefficients, $$\kroncoeff_{\mu\, (2d) \, \lambda} = \kroncoeff_{\mu \lambda \, (2d) }=\charf{ \lambda \!=\! \mu \!=\! (d,d) }.$$  We also have that $\charf{ 1 \! \leq \! 0 \! - \! m_1 \! - \! 2m_2 \! \leq \! 2  }=0$, so the formula from part 2 of the proposition gives that
        \begin{eqnarray*}
            \kroncoeff_{\mu\, (2d) \, \lambda}&=&\charf{ 0 \! - \! m_1 \! - \! 2m_2 \! = \! 0 \textrm{ or } 3 \! }
                \charf{\lambda_1\! +\! m_2\! =\! d }\\
            &=& \charf{  m_1 \! + \! 2m_2 \! = \! 0  }\charf{ d \! =\! \lambda_1 \! =\! \lambda_2 }\\
            &=& \charf{ \lambda \!=\! \mu \!=\! (d,d) }.
        \end{eqnarray*}

        Now, let $k=2d\!-\!1$, so $\nu=(1^{2d})$ is a one-column shape.  From Remark~\ref{rem:transposesymmetry}, we have that
          $$\kroncoeff_{\mu\, (1^{2d}) \, \lambda} = \kroncoeff_{\mu \lambda \, (1^{2d})}=\charf{ \lambda \!=\! \mu'\!=\!(2^d) }.$$  Also, the formula in part 2 is unchanged when calculating $\kroncoeff_{\mu\nu'\lambda'}$ for transposed $\nu $ and $\lambda$, so we use the above case when $k=0$ to show that
          $$\kroncoeff_{\mu\, (1^{2d}) \, \lambda} = \kroncoeff_{\mu\, (2d) \, \lambda'}=\charf{ \lambda' \!=\! \mu\!=\!(d,d) }=\charf{ \lambda \!=\! (2^d) }.$$

        Let $\lambda=(\lambda_1,\lambda_2,2^{m_2},1^{m_1})$ be a double hook.
        We consider two cases: $\lambda_1 \! - \! \lambda_2 \leq m_1$ and $\lambda_1 \! - \! \lambda_2 > m_1$.

        \begin{enumerate}
            \item[Case 1]  $\lambda_1 \! - \! \lambda_2 \leq m_1$: From Theorem \ref{Rosas1} part 3 with $\mu_1\! =\! \mu_2\! =\!d$ and $\nu_1\! =\!k$, we have that
                \begin{eqnarray*}
                   \kroncoeff_{\mu\nu\lambda} &=&  \charf{ \lambda_2 \leq d \! - \! m_2 \leq \lambda_1 \! }
                        \charf{ 0 \leq k \! - \! m_1 \! - \! 2m_2 \leq 3 \! }\\
                   && + \charf{ \lambda_2 \leq d \! - \! m_2 \! - \! 1\leq \lambda_1 \! }
                        \charf{ 1 \leq k \! - \! m_1 \! - \! 2m_2 \leq 2 \! }\\
                   && + \charf{ \lambda_2 \leq d \! - \! m_2 \! + \! 1 \leq \lambda_1 \! }
                        \charf{ 1 \leq k \! - \! m_1 \! - \! 2m_2 \leq 2 \! }\\
                   && - \charf{ \lambda_2 \! + \! m_2 \! + \! m_1 \! =\! d}
                        \charf{ 1 \leq k \! - \! m_1 \! - \! 2m_2 \leq 2 \! }.\\
                    & =& \charf{ 1 \! \leq \! k \! - \! m_1 \! - \! 2m_2 \! \leq \! 2 \! } \!
                    \left[ \!  - \charf{ \lambda_2 \! + \! m_2 \! + \! m_1 \! =\! d}\! +
                    \!\! \!  \sum_{-1\leq a\leq 1 }\!\!\! \charf{\lambda_2\! \leq \! d\! -\! m_2\! +\! a\! \leq \! \lambda_1\! } \! \right]
\\
          && +  \charf{ k \! - \! m_1 \! - \! 2m_2 \! = \! 0 \textrm{ or } 3 \! }
            \charf{ \lambda_2 \leq d \! - \! m_2 \leq \lambda_1 \! }. \!
        \end{eqnarray*}
Now, consider the following characteristic function
        $$\begin{array}{rll}
                \Big. \charf{ d\! +\! a \! \leq \! \lambda_1 \! + \! m_2 } \Big.
                  & = \charf{\lambda_2\! +\! m_2\! +\! m_1 \! \leq \! d\! -\! a }
                    & ( \textrm{as }\lambda\vdash2d)
                \\
                 & = \Big. \charf{\lambda_1\! +\! m_2\! \leq \! d\! -\! a } \Big.
                    & (\textrm{as }\lambda_1 \! - \! \lambda_2 \! \leq \! m_1)
                \\
                 & = \Big. \charf{\lambda_1\! +\! m_2\! \leq \! d\! -\! a } \Big.
                        \charf{ d\! +\! a \! \leq \! \lambda_1 \! + \! m_2 }
                \\
                 & = \Big.\charf{d \! +\! a \! \leq \! \lambda_1\! +\! m_2\! \leq \! d\! -\! a } \Big. .
        \end{array}$$
We use this to simplify the sum in the formula $\kroncoeff_{\mu\nu\lambda}$ above by using the identity
$$\charf{\lambda_2\! \leq \! d\! -\! m_2\! +\! a\! \leq \! \lambda_1\! } =
    \charf{\lambda_2 \! +\! m_2\! \! \leq \! d\!  +\! a }
            \charf{ d\! +\! a\! \leq \! \lambda_1\!  +\! m_2 }.$$
If $ a > 0$, then the inequality is impossible to satisfy, so $$\charf{d \! +\! a \! \leq \! \lambda_1\! +\! m_2\! \leq \! d\! -\! a }=0,$$ and so the $a\! =\! 1$ term in the sum is zero.
For the $ a=0$ term, since $\lambda\vdash 2d$ and $\lambda_2 \leq \lambda_1$ by the definition of partitions,
$$\charf{\lambda_2\! + \! m_2 \leq \! d }\charf{d\!\leq \!\lambda_1\!    +\! m_2\!\leq \! d}=
    \charf{\lambda_1\! +\! m_2\! =\! d }=\charf{ \lambda_2 \! + \! m_2 \! + \! m_1 \! =\! d}.$$

Applying these to $\kroncoeff_{\mu\nu\lambda}$, we get that
     \begin{eqnarray*}
        \kroncoeff_{\mu\nu\lambda} & =& \charf{ 1 \! \leq \! k \! - \! m_1 \! - \! 2m_2 \! \leq \! 2  } \!
            \Big[  \charf{ \lambda_2 \! + \! m_2 \! + \! m_1 \! =\! d}
            - \charf{ \lambda_2 \! + \! m_2 \! + \! m_1 \! =\! d } \! \Big]\\
        && +\Big. \charf{ 1 \! \leq \! k \! - \! m_1 \! - \! 2m_2 \! \leq \! 2  } \!
            \charf{d \! -\! 1 \! \leq \! \lambda_1\! +\! m_2\! \leq \! d\! +\! 1 }
            \charf{\lambda_2\! + \! m_2 \leq \! d\!-\! 1 } \Big.\\
          && +  \charf{ k \! - \! m_1 \! - \! 2m_2 \! = \! 0 \textrm{ or } 3  }
            \charf{\lambda_1\! +\! m_2\! =\! d }
                \charf{\lambda_2\! + \! m_2 \leq \! d } \\
        &=&  \charf{ 1 \! \leq \! k \! - \! m_1 \! - \! 2m_2 \! \leq \! 2  } \!
        \left( \begin{array}{l}
            \Big. \charf{d \! -\! 1 \! = \! \lambda_1\! +\! m_2 } \Big.
                \charf{\lambda_2\! + \! m_2 \leq \! d\!-\! 1 }\\
            + \Big. \charf{d \!  = \! \lambda_1\! +\! m_2 } \Big.
                \charf{\lambda_2\! + \! m_2 \leq \! d\!-\! 1 }\\
            + \Big. \charf{d \! +\! 1 \! = \! \lambda_1\! +\! m_2 } \Big.
                \charf{\lambda_2\! + \! m_2 \leq \! d\!-\! 1 }
        \end{array}\right) \\
          && +  \charf{ k \! - \! m_1 \! - \! 2m_2 \! = \! 0 \textrm{ or } 3 \! }
            \charf{\lambda_1\! +\! m_2\! =\! d }.
     \end{eqnarray*}
To simplify the large bracket, we have the following relations.
    $$\begin{array}{rll}
            \charf{d \! -\! 1 \! = \! \lambda_1\! +\! m_2 }
              & = \charf{d \! -\! 1 \! = \! \lambda_1\! +\! m_2 } \!
                \charf{\lambda_2\! + \! m_2 \leq \! d\!-\! 1 }. & ( \textrm{as } \lambda_2 \leq \lambda_1)
    \end{array}$$
    $$\begin{array}{rll}
            \charf{d \! +\! 1 \! = \! \lambda_1\! +\! m_2 }
              & = \Big. \charf{d \! -\! 1 \! = \! \lambda_2\! +\! m_2 \! + \! m_1 } \Big. & ( \textrm{as } \lambda \vdash 2d) \\
              & = \Big. \charf{d \! -\! 1 \! = \! \lambda_2\! +\! m_2 \! + \! m_1 }
                    \charf{\lambda_2\! + \! m_2 \leq \! d\!-\! 1 } \Big.  & ( \textrm{as } 0 \leq m_1) \\
              & = \Big. \charf{d \! +\! 1 \! = \! \lambda_1\! +\! m_2 }
                \charf{\lambda_2\! + \! m_2 \leq \! d\!-\! 1 }. \Big.
    \end{array}$$
    $$\begin{array}{rll}
            \charf{d \! = \! \lambda_1\! +\! m_2 } \! \charf{1 \! \leq \!  m_1 }
              & = \Big. \charf{d \!  = \! \lambda_2\! +\! m_2 \! + \! m_1 } \! \charf{1 \! \leq \!  m_1 } \\
              & =\Big. \charf{d \!  = \! \lambda_2\! +\! m_2 \! + \! m_1 } \!
                \charf{1 \! \leq \!  d \! - \! \lambda_2\! - \! m_2 } \Big. \\
              & = \Big. \charf{d \! = \! \lambda_1\! +\! m_2 } \!
                    \charf{\lambda_2\! + \! m_2 \! \leq \!  d \! - \! 1 } \Big..
    \end{array}$$
Thus,
\begin{eqnarray*}
    \kroncoeff_{\mu\nu\lambda} & =& \charf{ 1 \! \leq \! k \! - \! m_1 \! - \! 2m_2 \! \leq \! 2 \! } \!
        \Big[ \charf{d \!  = \! \lambda_1\! +\! m_2 }
        \charf{1 \! \leq \!  m_1 }
        + \charf{d \! \pm \! 1 \! = \! \lambda_1\! +\! m_2 } \Big] \\
        && +  \charf{ k \! - \! m_1 \! - \! 2m_2 \! = \! 0 \textrm{ or } 3 \! } \charf{\lambda_1\! +\! m_2\! =\! d }.
\end{eqnarray*}

\item[Case 2] $\lambda_1 \! - \! \lambda_2 > m_1$: Now let $\lambda=(\lambda_1,\lambda_2,2^{m_2},1^{m_1})$ be a double hook with $ m_1 < \lambda_1-\lambda_2 $, and $\nu=(2d \! -\! k,1^k)$.  Using the symmetry properties of the Kronecker coefficient, we have that $\kroncoeff_{\mu\nu\lambda}=\kroncoeff_{\mu\nu'\lambda'}$, where
    \begin{eqnarray*}
         && \nu'=(2d \! -\! k',1^{k'})=(k\! +\! 1,1^{2d - k - 1}),  \\
         && \lambda'=(\lambda_1',\lambda_2',2^{m_2'},1^{m_1'}) = (m_2\! +\! m_1\! +\! 2,m_2\! +\! 2,2^{\lambda_2 - 2},1^{\lambda_1 - \lambda_2}).
    \end{eqnarray*}
    $\kroncoeff_{\mu\nu'\lambda'}$ is given by the formula for Case 1, as $\nu'$ is still a hook, and
    $$\lambda_1'\!  - \! \lambda_2'\! = \! (m_2\! +\! m_1\! +\! 2) \! - \! (m_2\! +\! 2) \! =\! m_1  <  \lambda_1\! -\! \lambda_2 \! = \! m_1'.$$
    To use Case 1, we first simplify the following expressions using the fact $\lambda \vdash 2d$ repeatedly:
    \begin{eqnarray*}
        k' \! - \! m_1' \! - \! 2m_2' & =& (2d\! -\! k\! -\! 1) \! - \! (\lambda_1\! -\! \lambda_2) \! - \! 2(\lambda_2 \! - \! 2) \\
            &=& 2d \! - \! \lambda_1\! -\! \lambda_2\! -\! k \! +\! 3 \\
            &=& \Big. 2m_2 \! + \! m_1\! -\! k \! +\! 3 \Big. \\
       \Big.  \lambda_1' \! +\! m_2' &=& (m_2\! +\! m_1\! +\! 2) \! + \! (\lambda_2 \! - \! 2) \Big. \\
            &=& \lambda_2 \!+ \! m_2\! +\! m_1,
    \end{eqnarray*}
    and then use the above expressions in identities for each of the characteristic functions in $\kroncoeff_{\mu\nu'\lambda'}$:
    \begin{eqnarray*}
        \charf{ 1 \! \leq \! k' \! - \! m_1' \! - \! 2m_2' \! \leq \! 2 \! } &=&
            \charf{ 1\! - \! 3 \! \leq \! - \!(k \! - \! m_1 \! - \! 2m_2)  \! \leq \! 2\! -\! 3 \! } \\
          &= & \Big. \charf{ 1 \! \leq \! k \! - \! m_1 \! - \! 2m_2 \! \leq \! 2 \! },\Big. \\
        \Big. \charf{ k' \! - \! m_1' \! - \! 2m_2' \! = \! 0 \textrm{ or } 3 \! } &=&
            \charf{ -\! (k \! - \! m_1 \! - \! 2m_2) \! = \! -3 \textrm{ or } 0 \! } \\
          &=& \charf{ k \! - \! m_1 \! - \! 2m_2 \! = \! 0 \textrm{ or } 3 \! }, \Big.
    \end{eqnarray*}
    \begin{eqnarray*}
        \charf{d \! \pm \! 1 \! = \! \lambda_1' \! +\! m_2' } &=&
            \charf{d \! + \! 1 \! = \! \lambda_2 \!+ \! m_2\! +\! m_1 }
            + \charf{d \! - \! 1 \! = \! \lambda_2 \!+ \! m_2\! +\! m_1 } \\
          &=& \charf{d \! - \! 1 \! = \! \lambda_1 \!+ \! m_2 }
            + \charf{d \! + \! 1 \! = \! \lambda_1 \!+ \! m_2 }\\
          &=& \charf{d \! \pm \! 1 \! = \! \lambda_1 \! +\! m_2 },
    \end{eqnarray*}
    \begin{eqnarray*}
        \charf{d \!  = \! \lambda_1' \! +\! m_2' } \! \charf{1 \! \leq \!  m_1' }
          &=& \charf{d \! = \! \lambda_2 \!+ \! m_2\! +\! m_1 }\!
            \charf{1 \! \leq \! \lambda_1\! -\! \lambda_2 }\\
          &=& \charf{d \! = \! \lambda_1 \!+ \! m_2 } \!
            \charf{\lambda_1 \!+ \! m_2 \! = \! \lambda_2 \!+ \! m_2\! +\! m_1 } \!
            \charf{1 \! \leq \! \lambda_1\! -\! \lambda_2 }\\
          &=& \charf{d \! = \! \lambda_1 \!+ \! m_2 } \!
            \charf{\lambda_1 \! - \! \lambda_2 \!=\! m_1 } \!
            \charf{1 \! \leq \! m_1 }\\
          &=& \charf{d \! = \! \lambda_1 \!+ \! m_2 } \!
            \charf{1 \! \leq \! m_1 }.
    \end{eqnarray*}
Finally, we see that the above expressions for $\lambda'$ are identical to those for $\lambda$, so $\kroncoeff_{\mu\nu\lambda}$ is given by the formula from Case 1 for any $\lambda_1-\lambda_2$.
\end{enumerate}
        \item
            If $\lambda$ is a one-row shape or a one-column shape, then
            $\kroncoeff_{\mu\nu\lambda}=\charf{ \mu\!=\!\nu }$ or $\charf{ \mu\!=\!\nu' }$.  Now, $\nu$ and $\nu'$ are both hooks, so $2\leq d$, as hooks are partitions of $3$ or more.  Hence, $\mu=(d,d)$ has  $\mu_2 \geq 2$, so $\mu$ cannot be a hook, and $\charf{ \mu\!=\!\nu }= \charf{ \mu\!=\!\nu' }=0$.
            If $\lambda$ is not contained in a double hook, then from Theorem \ref{Rosas1}, $\kroncoeff_{\mu\nu\lambda}=0. $
\end{enumerate}
\end{proof}

\begin{corollary}\label{cor:multfree} The Kronecker product $s_{(d,d)} \ast s_{(2d-\! k,1^k)}$ is multiplicity free, that is, in
    $$s_{(d,d)}\ast s_{(2d-\! k,1^k)} = \sum_{ \lambda \vdash 2d} \kroncoeff_{\mu\nu\lambda} s_\lambda,$$
 we have    $\kroncoeff_{\mu\nu\lambda} = 1$ or $0$.
\end{corollary}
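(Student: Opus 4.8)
The plan is to read the result straight off the closed formulas for $\kroncoeff_{\mu\nu\lambda}$ recorded in Proposition~\ref{ddcase}, checking in each of its three cases that the stated expression can only take the values $0$ or $1$. Since $\mu=(d,d)$ and $\nu=(2d\!-\!k,1^k)$ are precisely the partitions indexing the factors of $s_{(d,d)}\ast s_{(2d-k,1^k)}$, this shows that every coefficient in the Schur expansion is Boolean, which is exactly the assertion that the product is \emph{multiplicity free}.

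I would first dispose of the two easy cases. If $\lambda$ is a one-row shape, a one-column shape, or is not contained in a double hook, then part~(3) of Proposition~\ref{ddcase} gives $\kroncoeff_{\mu\nu\lambda}=0$, which lies in $\{0,1\}$. If $\lambda=(\lambda_1,1^m)$ is a hook, then part~(1) expresses $\kroncoeff_{\mu\nu\lambda}$ as the product $\charf{d\!-\!1\leq m\leq d}\,\charf{d\!-\!1\leq k\leq d}$ of two propositional characteristic functions; each factor lies in $\{0,1\}$, hence so does the product.

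The substance is the double-hook case, part~(2), where
\[
\kroncoeff_{\mu\nu\lambda} = \charf{1\leq k\!-\!m_1\!-\!2m_2\leq 2}\,\big[\charf{d=\lambda_1\!+\!m_2}\charf{1\leq m_1}+\charf{d\pm1=\lambda_1\!+\!m_2}\big] + \charf{k\!-\!m_1\!-\!2m_2=0\textrm{ or }3}\charf{d=\lambda_1\!+\!m_2},
\]
with $\charf{d\pm1=\lambda_1\!+\!m_2}$ abbreviating $\charf{d\!+\!1=\lambda_1\!+\!m_2}+\charf{d\!-\!1=\lambda_1\!+\!m_2}$. Here I would record two observations. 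First, the bracketed factor is itself $0$ or $1$: its three summands are governed by the conditions $\lambda_1\!+\!m_2=d$, $\lambda_1\!+\!m_2=d\!+\!1$, and $\lambda_1\!+\!m_2=d\!-\!1$ (the first further restricted by $\charf{1\leq m_1}$), and a single integer $\lambda_1\!+\!m_2$ can equal at most one of $d\!-\!1,d,d\!+\!1$, so at most one summand is nonzero. Consequently the first term of $\kroncoeff_{\mu\nu\lambda}$, being $\charf{1\leq k\!-\!m_1\!-\!2m_2\leq 2}$ times this bracket, lies in $\{0,1\}$, and the second term is manifestly a product of two indicators and so also lies in $\{0,1\}$. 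Second, these two terms are never simultaneously positive, because the first forces $k\!-\!m_1\!-\!2m_2\in\{1,2\}$ while the second forces $k\!-\!m_1\!-\!2m_2\in\{0,3\}$, and these sets are disjoint. A term in $\{0,1\}$ added to a term in $\{0,1\}$ that cannot both be positive yields a value in $\{0,1\}$, which finishes this case.

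I do not expect a genuine obstacle: once Proposition~\ref{ddcase} is available, the corollary reduces to a finite inspection. The only point demanding care is the bookkeeping in part~(2), namely stating explicitly the two disjointness claims (among the three summands in the bracket, determined by the value of $\lambda_1\!+\!m_2$, and between the two main terms, determined by the value of $k\!-\!m_1\!-\!2m_2$). With these stated, no cancellation or inequality estimate is needed, and the coefficient is seen to be Boolean directly.
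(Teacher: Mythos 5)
Your proposal is correct and takes essentially the same route as the paper's own proof: both read the corollary directly off Proposition~\ref{ddcase}, disposing of the hook, one-row, one-column, and non-double-hook cases at once, and then noting that in the double-hook formula the terms are characteristic functions of mutually exclusive propositions (the three summands in the bracket exclude one another via the value of $\lambda_1+m_2$, and the two main terms exclude one another via the value of $k-m_1-2m_2$), so the coefficient is always $0$ or $1$. The paper states this disjointness more tersely; your write-up just makes the same observation explicit.
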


\begin{proof}This is clear from Proposition~\ref{ddcase} when $\lambda$ is a hook, a one-row shape, or a one-column shape.  If $\lambda$ is a double hook, then we note that the terms of the formula for double hooks in Proposition \ref{ddcase} are given by characteristic functions with mutually exclusive propositions, so no more than one term can equal $1$ for a given double hook $\lambda$. \end{proof}
}

\begin{theorem} \label{the:hookform}Let $d \geq 3$ and $0\leq k \leq 2d\!-\!1$.
Define the map
\begin{eqnarray*}
    \Big. \phi: \{ \lambda \vdash 2d \} &\rightarrow& \{ \lambda \vdash 2(d\! +\! 1) \} \Big.  \\
   \lambda =(\lambda_1,\lambda_2,\ldots,\lambda_k) &\mapsto& (\lambda_1 \! +\! 1,\lambda_2,\ldots,\lambda_k,1).
\end{eqnarray*}
Then
\begin{align}
s_{(d+1,d+1)} \ast &s_{(2d-k+1,1^{k+1})} = 
s_{(d-h_{k-1}+1,d-h_{k-1},2^{h_{k-1}},1)}+s_{(d+1-r_k,d+1-r_k,2^{r_k})}\label{eq:hookform}\\
&+s_{(d-h_{k-2},d-h_{k-2},2^{h_{k-2}},1^2)}+s_{(d+2-h_{k},d-h_{k},2^{h_{k}})}
+ \sum_{ \lambda \vdash \, 2d} \kroncoeff_{(d,d)(2d-k,1^{k})\lambda}s_{\phi(\lambda)},\nonumber
\end{align}
with $h_{n}= \left\lfloor \frac{n}{2} \right\rfloor$ and $r_k= h_{k-1} + (k~mod~2)$.  On condition
that the Schur functions in  \eqref{eq:hookform} are not indexed by partitions, the terms
are assumed to be $0$.
\end{theorem}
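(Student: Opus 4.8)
The plan is to verify \eqref{eq:hookform} coefficient by coefficient, comparing the explicit indicator-function descriptions of the two families of Kronecker coefficients that occur. Specializing Proposition~\ref{Rosas1} to $\mu=(d,d)$ and the hook $\nu=(2d-k,1^k)$ (and using the transpose symmetry $\kroncoeff_{\mu\nu\lambda}=\kroncoeff_{\mu\nu'\lambda'}$ to remove the restriction $\lambda_1-\lambda_2\le m_1$ in part~3) expresses $\kroncoeff_{(d,d)(2d-k,1^k)\lambda}$, for a double hook $\lambda=(\lambda_1,\lambda_2,2^{m_2},1^{m_1})\vdash2d$, as a sum of products of characteristic functions whose propositions are mutually exclusive. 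In particular each such coefficient is multiplicity free, so both sides of \eqref{eq:hookform} are honest sums of distinct Schur functions and it suffices to match supports. Throughout I would write $\mu=(\mu_1,\mu_2,2^{n_2},1^{n_1})\vdash2(d{+}1)$ for the varying index on the left, and note that $\phi(\lambda)=(\lambda_1{+}1,\lambda_2,2^{m_2},1^{m_1+1})$, so $\phi$ raises $\lambda_1$ and $m_1$ each by one while fixing $\lambda_2$ and $m_2$.

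The first step is to show that $\phi$ is almost an isomorphism of the two coefficient formulas. Applying the same specialization at level $d{+}1$ with the hook $(2d{-}k{+}1,1^{k+1})$, one sees that the controlling quantity $\nu_1-m_1-2m_2$ is preserved (both $\nu_1$ and $m_1$ grow by one) and that the condition $\lambda_1+m_2=d$ becomes $\mu_1+n_2=d{+}1$; the only genuine change is that a hypothesis $m_1\ge1$ in the lower formula is weakened to $m_1{+}1\ge1$, which always holds. A direct comparison then gives
\[
\kroncoeff_{(d+1,d+1)(2d-k+1,1^{k+1})\,\phi(\lambda)}
=\kroncoeff_{(d,d)(2d-k,1^k)\,\lambda}
+\charf{1\le k-2m_2\le2}\,\charf{m_1=0}\,\charf{\lambda_1+m_2=d}.
\]
Using $\lambda\vdash2d$ the discrepancy forces $\lambda_1=\lambda_2=d-m_2$ and then, by the parity of $k$, $m_2=h_{k-1}$; hence it is supported on the single $\lambda_0=(d-h_{k-1},d-h_{k-1},2^{h_{k-1}})$, at which the lower coefficient itself vanishes, and $\phi(\lambda_0)$ is exactly the first listed term of \eqref{eq:hookform}. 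Since $\phi$ is injective with image $\{\mu:\mu_1>\mu_2\text{ and }\mu\text{ has a part equal to }1\}$, this already matches the coefficient of $s_\mu$ on both sides for every $\mu$ in that image.

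It remains, and this is the main obstacle, to enumerate the support of $s_{(d+1,d+1)}\ast s_{(2d-k+1,1^{k+1})}$ lying outside $\operatorname{im}\phi$, namely those $\mu$ with $n_1=0$ or with $\mu_1=\mu_2$. Setting $s=\mu_1+n_2$ and $t=(k{+}1)-n_1-2n_2$ and imposing $|\mu|=2(d{+}1)$, the level-$(d{+}1)$ formula collapses: when $n_1=0$ the value $s=d{+}1$ forces $\mu_1=\mu_2$ and, through $t\in\{0,3\}$, the unique $n_2=r_k$, giving the second term; the value $s=d{+}2$ forces, through $t\in\{1,2\}$, the unique $n_2=h_k$, giving the fourth term; and $s=d$ is infeasible. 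When $\mu_1=\mu_2$ with $n_1\ge1$, the size constraint gives $s=d{+}1-n_1/2$, so only $n_1=2$ survives, and $t\in\{1,2\}$ then pins $n_2=h_{k-2}$, giving the third term. In each case the parity of $k$ selects which of the two admissible values of $t$ occurs, so exactly one shape is produced.

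Finally I would dispatch the degenerate cases: for hooks $\lambda=(\lambda_1,1^m)$ one has $\phi(\lambda)=(\lambda_1{+}1,1^{m+1})$ and the two coefficient formulas agree with no discrepancy, while one-row and one-column $\mu$ contribute nothing because $(d{+}1,d{+}1)$ is not a hook and equals neither $\nu$ nor $\nu'$. The hypothesis $d\ge3$ guarantees that the four listed shapes are genuine, pairwise distinct partitions, so assembling the cases above yields equality of the coefficient of $s_\mu$ on the two sides for every $\mu\vdash2(d{+}1)$, which is the assertion. The real work is concentrated in the out-of-image enumeration, where every case bifurcates according to the parity of $k$ and one must confirm that precisely the three stated double hooks survive.
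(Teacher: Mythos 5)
Your proposal is correct and follows essentially the same route as the paper's proof: both specialize Rosas' formula (Proposition~\ref{Rosas1}) to $\mu=(d,d)$, compare coefficients at levels $d$ and $d+1$ through the map $\phi$, identify the single discrepancy on the image of $\phi$ as the first listed term, and enumerate the partitions outside the image to produce the remaining three terms. The only differences are organizational --- you derive a uniform, transpose-symmetrized specialization of Rosas' formula up front (and invoke its multiplicity-freeness) rather than handling the transpose case by case, and you split the out-of-image partitions by $n_1=0$ versus $\mu_1=\mu_2$ with $n_1\geq 1$ instead of the paper's split on whether $\gamma_1-\gamma_2\leq b_1$ --- neither of which constitutes a genuinely different argument.
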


\begin{proof}
Using Proposition \ref{Rosas1} (2), if $\lambda$ and $\gamma = \phi(\lambda)$ are hooks
we calculate that $\kroncoeff_{(d+1,d+1)(2d+1-k,1^{k+1})\gamma}- \kroncoeff_{(d,d)(2d-k,1^k)\lambda}
=0$.  We assume then that $\gamma = \phi(\lambda) =
(\lambda_1+1,\lambda_2,2^{a_2},1^{a_1})$ for
some $\lambda$ and calculate that
\begin{align*}
\kroncoeff_{(d+1,d+1)(2d+1-k,1^{k+1})\gamma}\! - \!\kroncoeff_{(d,d)(2d-k,1^k)\lambda} =
\charf{&\lambda_2\!-\!1\!=\!d\!-\!a_2\!\leq\!\lambda_1} 
\charf{a_1\!+\!2a_2\!\leq\!k\!\leq\!a_1\!+\!2a_2\!+\!3}\\+
\charf{&\lambda_2\!-\!1\!=\!d\!-\!a_2-1\!\leq\!\lambda_1} 
\charf{ a_1\!+\!2a_2+1\!\leq k\!\leq\!a_1\!+\!2a_2+2}\\+
\charf{&\lambda_2\!-\!1\!=\!d\!-\!a_2\!+\!1\!\leq\!\lambda_1} 
\charf{ a_1\!+\!2a_2\!+\!1\!\leq\!k\!\leq\! a_1\!+\!2a_2+2}~.
\end{align*}
This expression is equal to $1$ if and only if
$\gamma = (d-h_{k-1}+1,d-h_{k-1},2^{h_{k-1}},1)$ and it is equal to $0$ for all
other partitions $\gamma = \phi(\lambda)$.  This holds if $\lambda_1-\lambda_2 \leq a_1$
and $\kroncoeff_{(d+1,d+1)(2d+1-k,1^{k+1})\gamma}\! - \!
\kroncoeff_{(d,d)(2d-k,1^k)\lambda}=0$ otherwise (recall that
Proposition \ref{Rosas1} (3) holds only if $\lambda_1 - \lambda_2 \leq a_1$).

Now assume that $\gamma = (\gamma_1, \gamma_2, 2^{b_2},1^{b_1})$ 
is not in the image of $\phi$, then either $\gamma_1=\gamma_2$ or $b_1 = 0$.
If $\gamma_1 -\gamma_2 \leq b_1$, then we conclude that $\gamma_1 = \gamma_2$ and
we can apply Proposition \ref{Rosas1} (3).
We leave it to the reader to calculate directly $\kroncoeff_{(d+1,d+1)(2d+1-k,1^{k+1})\gamma}$
from Proposition \ref{Rosas1}
in this case and conclude that it is equal to $1$ (and $0$ otherwise) if and only if
$\gamma = (d-h_{k-2}, d - h_{k-2}, 2^{h_{k-2}}, 1^2)$ or $\gamma = (d+1-r_k,d+1-r_k, 2^{r_k})$.

Finally the remaining case to consider is when $\gamma$ is not in the image of $\phi$ and
$\gamma_1 -\gamma_2>b_1$.  We let
$\gamma' = (\alpha_1, \alpha_2, 2^{c_2}, 1^{c_1})$ where $c_1>0$ and $\alpha_1 = \alpha_2$
and again leave the detail of calculating $\kroncoeff_{(d+1,d+1)(k+2,1^{2d-k})\gamma'}$ to
the reader.  This coefficient is equal to $1$ (and $0$ otherwise) if and only if
$\gamma' = (h_k+2, h_k+2, 2^{d-2-h_k},1^2)$ and $\gamma = (d+2-h_k,d-h_k,2^{h_k})$.

These calculations show that
$$s_{(d+1,d+1)}\ast s_{(2d-k+1,1^{k+1})} \!- \!\sum_{ \lambda \vdash \, 2d} \kroncoeff_{(d,d)(2d-k,1^{k})\lambda}s_{\phi(\lambda)}$$
consists of precisely the $4$ terms stated in \eqref{eq:hookform} if they exist. 
\end{proof}

\comment{By iterating this recursive formula we arrive at the following direct
expression for the Kronecker product of $s_{(d,d)}$ with a Schur function
indexed by a hook.

\begin{corollary} \label{explicitforhooks}
For $d \geq 2$ and $0 \leq k < 2d$,
\begin{align*}
s_{(d,d)} \ast s_{(2d-k,1^k)} &= 
\sum_{r=0}^{min(k,2d-4-k)} s_{(d+m_{k-r}-h_{k-r},d-r-h_{k-r+1},2^{h_{k-r}},1^r)}\\
&~~~+ \sum_{r=0}^{min(k-2,2d-3-k)} s_{(d+1-h_{k-r}, d-r-h_{k-r}, 2^{h_{k-r-2}},1^{r+1})} \\
&~~~+ \sum_{r=0}^{min(k-2,2d-3-k)} s_{(d+2-h_{k-r+1}, d-r-1+m_{k-r} - h_{k-r-2}, 2^{h_{k-r-2}},1^r)}\\
&~~~+ \sum_{r=0}^{min(k-3,2d-2-k)} s_{(d-h_{k-r-1}, d-r-h_{k-r-1}, 2^{h_{k-r-3}},1^{r+2})}\\
&~~~+ \charf{d=k~or~k+1} s_{(d,1^d)}
+ \charf{d=k+1} s_{(d+1,1^{d-1})}\\
&~~~+ \charf{d\leq k \leq 2d-1} s_{(2d+1-k,2^{k-d},1^{2d-1-k})}
\end{align*}
where $h_k = \lfloor \frac{k}{2} \rfloor$ and $m_k = k~mod~2~.$
\end{corollary}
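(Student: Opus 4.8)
The plan is to derive the closed form by induction on $d$, running the recursion of Theorem~\ref{the:hookform} in the contracting direction: reading that theorem with $d$ replaced by $d-1$ and $k$ by $k-1$ expresses the product $s_{(d,d)} \ast s_{(2d-k,1^k)}$ in terms of the smaller product $s_{(d-1,d-1)} \ast s_{(2(d-1)-(k-1),1^{k-1})}$. Concretely,
\begin{align*}
s_{(d,d)} \ast s_{(2d-k,1^k)} = \sum_{\lambda \vdash 2(d-1)} \kroncoeff_{(d-1,d-1)(2(d-1)-(k-1),1^{k-1})\lambda}\, s_{\phi(\lambda)} + E,
\end{align*}
where $E$ collects the four distinguished summands of \eqref{eq:hookform}. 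By the induction hypothesis the coefficient sum is the claimed closed form at $(d-1,k-1)$, so after applying $\phi$ termwise it remains to reconcile the result with the formula at $(d,k)$.

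The base cases are the one-row case $k=0$, where $s_{(d,d)} \ast s_{(2d)} = s_{(d,d)}$ and one checks every index range in the corollary is empty except for the single surviving term $s_{(d,d)}$; the one-column case $k=2d-1$, where $s_{(d,d)} \ast s_{(1^{2d})} = s_{(2^d)}$ is produced solely by $\charf{d \le k \le 2d-1}\, s_{(2d+1-k,2^{k-d},1^{2d-1-k})}$; and the finitely many small $d$ (namely $d\in\{2,3\}$) forced by the hypothesis $d\ge 3$ of Theorem~\ref{the:hookform}, which must be verified directly. Since the contraction $(d,k)\mapsto(d-1,k-1)$ preserves $d-k$ and lowers both parameters, every admissible pair descends either to $k=0$ or to small $d$, so these suffice; alternatively the cases $k\ge d$ may be folded onto $k\le d-1$ via the transpose symmetry $\kroncoeff_{(d,d)(2d-k,1^k)\lambda}=\kroncoeff_{(d,d)(k+1,1^{2d-1-k})\lambda'}$.

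First I would analyze the termwise action of $\phi$. Because $\phi(\mu_1,\mu_2,2^{a_2},1^{a_1})=(\mu_1+1,\mu_2,2^{a_2},1^{a_1+1})$ increments both the first part and the exponent of $1$, I expect that after the reindexing $r\mapsto r+1$ the image of each of the four sums at $(d-1,k-1)$ is exactly the corresponding sum at $(d,k)$ with its $r=0$ term deleted. The upper limits then match on the nose, since $\min(k-1,2(d-1)-4-(k-1))+1=\min(k,2d-4-k)$ and likewise for the three remaining ranges; the interior identifications of parts use the floor identities $h_n=h_{n-2}+1$ and $h_{n+1}=h_n+\charf{n\text{ odd}}$. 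The four deleted $r=0$ terms must then be supplied by the four summands of $E$: for example a split on the parity of $k$ identifies $s_{(d-h_{k-1}+1,\,d-h_{k-1},\,2^{h_{k-1}},1)}$ with the $r=0$ term of the second sum at $(d,k)$, and analogously for the other three.

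The hard part will be precisely this matching of the four $E$-terms together with the degenerate boundary contributions. The floor and parity data $h_{k-r}$, $h_{k-r+1}$, $m_{k-r}$, and $r_k=h_{k-1}+(k\bmod 2)$ shift non-uniformly as $r$ and $k$ change parity, so each identification of an $E$-term with the right $r=0$ summand needs separate even- and odd-$k$ analyses. In addition, the terms governed by $\charf{d=k\text{ or }k+1}$, $\charf{d=k+1}$, and $\charf{d\le k\le 2d-1}$ appear exactly where a double hook collapses to a hook or where the two arguments of a $\min$ cross; since $\phi$ carries hooks to hooks, I must verify that the hook images produced by $\phi$ together with these characteristic-function terms account for every boundary case with no overlap and no omission. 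Checking this coincidence at all four boundaries in both parities is the bulk of the work, whereas the interiors of the four sums are handled uniformly by the reindexing above.
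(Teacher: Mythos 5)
Your proposal follows essentially the same route as the paper's own argument: induction on $d$ using Theorem~\ref{the:hookform} read in the contracting direction (with $d\mapsto d-1$, $k\mapsto k-1$), base cases $k=0$, $k=2d-1$, and small $d$ checked directly, and the observation that $\phi$ applied to the four sums at $(d-1,k-1)$ reproduces the four sums at $(d,k)$ with their $r=0$ terms deleted, those being supplied exactly by the four distinguished summands of \eqref{eq:hookform} after the even/odd-$k$ floor-function reconciliation (the paper handles this via the preliminary identity merging the $r_{k-1}$ and $h_{k-1}$ terms). The plan is sound and matches the paper's proof in structure and in all essential details.
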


\begin{proof}
First we note that 
\begin{align*}
s_{(d-r_{k-1},d-r_{k-1},2^{r_{k-1}})}&+s_{(d+1-h_{k-1},d-1-h_{k-1},2^{h_{k-1}})}\\
&= s_{(d+m_{k}-h_{k},d-h_{k+1},2^{h_{k}})} + 
s_{(d+2-h_{k+1}, d-1+m_{k} - h_{k-2}, 2^{h_{k-2}})}~.
\end{align*}
This can easily be verified by using the definition of $r_k = h_{k-1} + m_k$ and
checking that for $k$ even and then for $k$ odd that this equality holds.

First the base case, when $k=0$ and $d \geq 2$ then the first sum has exactly $1$ term
$s_{(d,d)}$ and for $k=2d-1$ only the last term contributes to the expression and
it is $s_{(2^d)}$.  Both these are consistent with the formulas for $s_{(d,d)} \ast s_{(2d)}$ and
$s_{(d,d)} \ast s_{(1^{2d})}$.  If $d= 2$, the base cases may easily be checked 
separately for each $k=0,1,2,3$.
Now the proof proceeds by induction following the expressions from Theorem \ref{the:hookform}
and the expression for $s_{(d-1,d-1)} \ast s_{(2d-k-1,1^{k-1})}$.

\begin{align*}
s_{(d,d)} \ast s_{(2d-k,1^k)} = 
&s_{(d+m_{k}-h_{k},d-h_{k+1},2^{h_{k}})} 
+s_{(d-1-h_{k-2}+1,d-1-h_{k-2},2^{h_{k-2}},1)}\\
&+ s_{(d+2-h_{k+1}, d-1+m_{k} - h_{k-2}, 2^{h_{k-2}})}
+s_{(d-1-h_{k-3},d-1-h_{k-3},2^{h_{k-3}},1^2)}\\
&+ \sum_{ \lambda \vdash \, 2d} \kroncoeff_{(d,d)(2d-k,1^{k})\lambda}s_{\phi(\lambda)}\\
=&s_{(d+m_{k}-h_{k},d-h_{k+1},2^{h_{k}})} 
+s_{(d-1-h_{k-2}+1,d-1-h_{k-2},2^{h_{k-2}},1)}\\
&+ s_{(d+2-h_{k+1}, d-1+m_{k} - h_{k-2}, 2^{h_{k-2}})}
+s_{(d-1-h_{k-3},d-1-h_{k-3},2^{h_{k-3}},1^2)}\\
&+\sum_{r=1}^{min(k,2d-4-k)} s_{(d+m_{k-r}-h_{k-r},d-r-h_{k-r+1},2^{h_{k-r}},1^{r})}\\
&+ \sum_{r=1}^{min(k-2,2d-3-k)} s_{(d+1-h_{k-r}, d-r-h_{k-r}, 2^{h_{k-r-2}},1^{r+1})} \\
&+ \sum_{r=1}^{min(k-2,2d-3-k)} s_{(d+2-h_{k-r+1}, d-r-1+m_{k-r} - h_{k-r-2}, 2^{h_{k-r-2}},1^{r})}\\
&+ \sum_{r=1}^{min(k-3,2d-2-k)} s_{(d-h_{k-r-1}, d-r-h_{k-r-1}, 2^{h_{k-r-3}},1^{r+1})}\\
&+ \charf{d=k~or~k+1} s_{(d,1^{d})}
+ \charf{d=k+1} s_{(d+1,1^{d-1})}\\
&+ \charf{d\leq k \leq 2d-2} s_{(2d+1-k,2^{k-d},1^{2d-1-k})}
\end{align*}
The first four terms in this expression are the $r=0$ terms in the four sums
and expression holds by induction.
\end{proof}}

This theorem can be used along with an induction argument to
derive a non-recursive formula for the Kronecker product
$s_{(d,d)} \ast s_{(2d-k,1^k)}$.  We choose to not include this
result here for lack of an application of this formula.

\begin{subsection}{Stability of Kronecker coefficients}

We now use Theorem~\ref{the:hookform} to observe the precise stabilization of the Kronecker coefficients $\kroncoeff_{(d+k, d+k)(d+2k+1, 1^{d-1})\lambda}$.

\begin{corollary}
Let $d\geq 1$,  $\lambda = (\lambda _1, \lambda _2, \lambda_3, \ldots ,\lambda _\ell)\vdash 2d$ and $\tilde{\lambda } = (\lambda _1 +k, \lambda _2 +k, \lambda_3, \ldots ,\lambda _\ell)\vdash 2d$. Then for $k\geq 0$
$$\kroncoeff_{(d,d)(d+1, 1^{d-1})\lambda}=\kroncoeff_{(d+k, d+k)(d+2k+1, 1^{d-1})\tilde{\lambda}}.$$However, for $k<0$ there exists $\lambda$ for which
$$\kroncoeff_{(d,d)(d+1, 1^{d-1})\lambda}\neq\kroncoeff_{(d+k, d+k)(d+2k+1, 1^{d-1})\tilde{\lambda}}.$$
\end{corollary}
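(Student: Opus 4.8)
The plan is to reduce both coefficients to the closed form for a two-row shape times a hook given in Proposition~\ref{Rosas1}---the same data packaged recursively in Theorem~\ref{the:hookform}---and then to observe that this closed form is invariant under the substitution that produces $\tilde\lambda$. Writing $\mu=(d,d)$ and $\nu=(d+1,1^{d-1})=(\nu_2,1^{\nu_1})$ with leg $\nu_1=d-1$, the passage to $(d+k,d+k)$, $(d+2k+1,1^{d-1})$, $\tilde\lambda$ changes $\mu$ only through $\mu_2=d\mapsto d+k$, changes the hook only in its arm $\nu_2\mapsto\nu_2+2k$ while fixing its leg $\nu_1=d-1$, and sends $\lambda\mapsto\tilde\lambda$ by adding $k$ to each of the first two rows, so that $\lambda_1-\lambda_2$ and the multiplicities $m_1,m_2$ of $1$- and $2$-rows are all preserved.

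The heart of the argument is the case in which $\lambda=(\lambda_1,\lambda_2,2^{m_2},1^{m_1})$ is a double hook with $\lambda_1-\lambda_2\leq m_1$, where I apply Proposition~\ref{Rosas1}(3) directly. The key observation is that every characteristic function occurring there sees $\mu$ only through $\mu_2$ and sees $\nu$ only through $\nu_1$: the terms compare $\mu_2-m_2$ and $\mu_2-m_2\pm1$ against the interval $[\lambda_2,\lambda_1]$, test the value of $\nu_1-m_1-2m_2$, and test the equality $\lambda_2+m_2+m_1=\mu_2$. Under the simultaneous shift $\mu_2\mapsto\mu_2+k$, $\lambda_1\mapsto\lambda_1+k$, $\lambda_2\mapsto\lambda_2+k$ with $\nu_1,m_1,m_2$ held fixed, both endpoints of every inequality move by $k$ and both sides of the equality move by $k$, so each term is unchanged and hence $\kroncoeff_{\mu\nu\lambda}=\kroncoeff_{(d+k,d+k)(d+2k+1,1^{d-1})\tilde\lambda}$.

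It then remains to treat the degenerate shapes. When $\lambda$ is a one-row shape, a one-column shape, or is not contained in a double hook I use parts (1), (2) and (4) of Proposition~\ref{Rosas1}, checking that $\tilde\lambda$ lands in a regime with the same value: a hook $\lambda$ becomes a double hook $\tilde\lambda$ for $k\geq1$, handled by the double-hook analysis, while a one-row $\lambda$ yields a coefficient that is $0$ before and after the shift. The one genuinely delicate subcase is $\lambda_1-\lambda_2>m_1$, where Proposition~\ref{Rosas1}(3) does not apply and one must pass to the transpose via $\kroncoeff_{\mu\nu\lambda}=\kroncoeff_{\mu\nu'\lambda'}$, as in the remark following Proposition~\ref{Rosas1}. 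Under transposition the shift of the top two rows of $\lambda$ turns into a shift of the number of $2$-columns of $\lambda'$ together with a change in the leg of $\nu'$, so I would re-verify invariance of the Case-1 formula in these transposed coordinates; I expect this transpose bookkeeping, rather than any single computation, to be the main obstacle.

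Finally, for $k<0$ a single example defeats the equality: taking $\lambda=(d,1^d)$, Proposition~\ref{Rosas1}(2) gives $\kroncoeff_{(d,d)(d+1,1^{d-1})(d,1^d)}=1$, while for $k=-1$ the prescription yields $\tilde\lambda=(d-1,0,1^{d-1})$, which is not a partition, so the right-hand coefficient is $0$. This exhibits the asymmetry and explains it: adding to the first two rows ($k\geq0$) never turns a partition into a non-partition nor moves it out of its regime in Proposition~\ref{Rosas1}, whereas subtracting can do both.
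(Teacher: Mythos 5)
Your route is genuinely different from the paper's: the paper never touches Proposition~\ref{Rosas1} for this corollary, but instead iterates the recursion of Theorem~\ref{the:hookform} from the base cases \eqref{eq:deq1} and \eqref{eq:deq2} to produce the explicit expansion \eqref{eq:deq3}, in which $k$ enters only as a shift of the first two rows of every indexing partition; the $k<0$ claim is then handled by pushing the vanishing of a single diagonal coefficient through the same recursion. Your idea of reading stability directly off the shift-invariance of Rosas's closed formula is attractive, and your central computation is correct in the regime where it applies: when $\lambda$ and $\tilde\lambda$ are both double hooks with $\lambda_1-\lambda_2\leq m_1$, every test in Proposition~\ref{Rosas1}(3) depends only on $\mu_2-m_2$, on $\nu_1-m_1-2m_2$, and on quantities that shift uniformly by $k$, so the coefficient is unchanged.

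However, there are genuine gaps. The transpose case is not deferrable bookkeeping; it is where the decisive partitions live. For instance $\lambda=(d+1,1^{d-1})$ carries coefficient $1$ on the left, and $\tilde\lambda=(d+k+1,k+1,1^{d-2})$ has $\tilde\lambda_1-\tilde\lambda_2=d>d-2=\tilde m_1$, so your main case never sees it. Moreover your claim that a hook $\lambda$ becoming a double hook $\tilde\lambda$ is ``handled by the double-hook analysis'' is wrong as reasoning: in that situation the two sides are computed by \emph{different} clauses of Proposition~\ref{Rosas1} (part (2) for $\lambda$, part (3) or its transpose for $\tilde\lambda$), so there is no single formula whose invariance you can cite --- both values must be computed and compared. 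The repair is concrete and should be carried out: with $\lambda'=(m_1+m_2+2,\,m_2+2,\,2^{\lambda_2-2},\,1^{\lambda_1-\lambda_2})$ and $\nu'=(d,1^{d})$, the shift acts in transposed coordinates by $\mu_2\mapsto\mu_2+k$, $\nu_1'\mapsto\nu_1'+2k$, $m_2'\mapsto m_2'+k$ with $\lambda_1',\lambda_2',m_1'$ fixed, and then $\mu_2-m_2'$ and $\nu_1'-m_1'-2m_2'$ are again unchanged while the equality test $\lambda_2'+m_2'+m_1'=\mu_2$ shifts by $k$ on both sides, so part (3) is invariant here too; the finitely many crossing cases (one-row and hook $\lambda$, and shapes with $\tilde\lambda_2\leq 1$) then require the short direct evaluations you skipped. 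Finally, your $k<0$ example establishes the inequality only under the unstated convention that $\kroncoeff_{\mu\nu\tilde\lambda}=0$ when $\tilde\lambda$ fails to be a partition, whereas the statement itself posits that $\tilde\lambda$ is a partition; you must make that convention explicit, and in fact it is unavoidable: for $k=-1$, $d=3$ the two products are $s_{(4,2)}+s_{(4,1,1)}+s_{(3,2,1)}+s_{(3,1,1,1)}+s_{(2,2,2)}$ and $s_{(3,1)}+s_{(2,1,1)}$, and one checks that the two coefficients agree for \emph{every} $\lambda$ whose $\tilde\lambda$ is an honest partition, so no non-degenerate counterexample exists there.
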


\begin{proof}
For $d=1$ we recall the well known result that for $k\geq 0$
\begin{equation}\label{eq:deq1}
s_{(k+1, k+1)}\ast s_{(2k+2)}= s_{(k+1, k+1)}.
\end{equation}
For $d=2$ we deduce immediately from \cite[Theorem 4.8]{BKron2} that for $k\geq 0$
\begin{equation}
\label{eq:deq2}
s_{(k+2, k+2)}\ast s_{(2k+3, 1)}= s_{(k+3, k+1)} + s_{(k+2, k+1, 1)}.
\end{equation}
For $d\geq 3$ repeated application of Theorem~\ref{the:hookform} to \eqref{eq:deq2} yields for $k\geq 0$
\begin{equation}\label{eq:deq3}\end{equation}
\begin{eqnarray*}
s_{(d+k, d+k)}\ast s_{(d+2k+1, 1^{d-1})}&=& s_{(d+1+k, 2+k, 1^{d-3})} + s_{(d+1+k, 1+k, 1^{d-2})}\\
&+& s_{(d+k, 2+k, 1^{d-2})} + s_{(d+k, 1+k, 1^{d-1})}\\
&+& s_{(d-1+k, 2+k, 2, 1^{d-3})}\\
&+&\sum _{m=0} ^{\lfloor \frac{d}{2} \rfloor -2} \left(s_{\phi ^{2-2m-2} (m+4+k, m+3+k, 2^m, 1)}\right.\\
&+& s_{\phi ^{2-2m-2}(m+4+k, m+4+k, 2^m)}\\
&+& s_{\phi ^{2-2m-2}(m+3+k, m+3+k, 2^m, 1^2) } \\
&+&\left. s_{\phi ^{2-2m-2} (m+4+k, m+2+k, 2^{m+ 1})}\right) \\
&+&\sum _{m=0} ^{\lfloor \frac{d}{2} \rfloor -2} \left( s_{\phi ^{2-2m-3}(m+4+k, m+3+k, 2^{m+1}, 1) }\right.\\
&+& s_{\phi ^{2-2m-3} (m+3+k, m+3+k, 2^{m+2})}\\
&+&  s_{\phi ^{2-2m-3}(m+4+k, m+4+k, 2^m, 1^2)}  \\
&+&  \left.s_{\phi ^{2-2m-3} (m+5+k, m+3+k, 2^{m+ 1})}\right) ~.
\end{eqnarray*}
For $k<0$ note that $\kroncoeff_{(-k, -k)(-2k)(-2k)}=0$ by \eqref{eq:deq1} and hence $\kroncoeff_{(-k, -k)(1^{-2k})(1^{-2k})}=0$ by the symmetry of Kronecker coefficients. Hence, by applying Theorem~\ref{the:hookform} $d+2k$ times we have $\kroncoeff_{(d+k, d+k)(d+2k+1, 1^{d-1})(d+2k+1, 1^{d-1})}=0$. However, from \eqref{eq:deq1}, \eqref{eq:deq2}, \eqref{eq:deq3} we see $\kroncoeff_{(d, d)(d+1, 1^{d-1})(d+1, 1^{d-1})}=1$, and the result follows.
\end{proof}
\end{subsection}

\end{section}

\comment{
\begin{proof}The image of $\phi$ is the set Im$\phi= \{ \phi(\lambda) | \lambda \vdash 2d \}$, and we can write
$$s_{(d+1,d+1)}\ast s_{(2d+1-k,1^{k+1})}= \sum_{ \zeta \in \textrm{Im}\phi} \kroncoeff_{(d+1,d+1) \phi(\nu) \zeta} \; s_\zeta + \sum_{ \zeta \not{\in} \textrm{Im}\phi} \kroncoeff_{(d+1,d+1) \phi(\nu) \zeta} \; s_\zeta.$$
We treat both sums separately, starting with the first.

Let $\zeta \in \textrm{Im}\phi$.  Then there is a partition $\lambda \vdash 2d$ such that $\phi(\lambda) = \zeta = (\lambda_1 \! +\! 1, \lambda_2,\ldots,\lambda_l,1)$. We have three cases to consider.  First, when $\lambda=(\lambda_1,1^m)$, $\phi(\lambda)=(\lambda_1\! +\! 1,1^{m + 1})$, so Proposition \ref{ddcase} gives that
\begin{eqnarray*}
    \kroncoeff_{(d+1,d+1) (2d+1-k,1^{k+1}) (\lambda_1\! +\! 1,1^{m + 1})}
    & =&\charf{ d\! +\! 1 \! - \! 1 \! \leq \! m \! +\! 1 \! \leq d\! + \! 1 }
                \charf{ d\! + \! 1 \! - \! 1 \! \leq \! k\! + \! 1 \! \leq d\! + \! 1 }\\
        &=& \Big. \charf{ d\! - \! 1 \! \leq \! m\leq d} \Big.
                \charf{ d \! - \! 1 \! \leq \! k \! \leq d } \\
        &=&  \kroncoeff_{(d,d) (2d-k,1^{k}) (\lambda_1,1^{m})}.
\end{eqnarray*}

Second, when $\lambda=(\lambda_1,\lambda_2,2^{m_2},1^{m_1})$ is a double hook, Proposition \ref{ddcase} gives that
\begin{eqnarray*}
    &&\kroncoeff_{(d+1,d+1) (2d+1-k,1^{k+1}) (\lambda_1+1,\lambda_2,2^{m_2},1^{m_1+1})}\\& =&
        \charf{ 1 \! \leq \! k \! + \! 1 \! - \! m_1 \! - \! 1 \! - \! 2m_2 \! \leq \! 2  } \!
        \left[ \begin{array}{l}
            \charf{d \! + \! 1 \!  = \! \lambda_1 \! + \! 1 \! +\! m_2 }
            \charf{1 \! \leq \!  m_1\! + \! 1 }  \\
            \Big. + \charf{d\! + \! 1 \! \pm \! 1 \! = \! \lambda_1\! + \! 1 \! +\! m_2 } \Big.
        \end{array} \right] \\
        && +  \charf{ k \! + \! 1 \! - \! m_1\! - \! 1 \! - \! 2m_2 \! = \! 0 \textrm{ or } 3  }
            \charf{d \! + \! 1 =\lambda_1\! + \! 1 \! +\! m_2 } \\
    &=& \charf{ 1 \! \leq \! k \! - \! m_1 \! - \! 2m_2 \! \leq \! 2  } \!
        \Big[ \charf{d \!  = \! \lambda_1\! +\! m_2 }
        + \charf{d \! \pm \! 1 \! = \! \lambda_1\! +\! m_2 } \Big] \\
        && +  \charf{ k \! - \! m_1 \! - \! 2m_2 \! = \! 0 \textrm{ or } 3  }
        \charf{d\! +\! 1 \! =\! \lambda_1 \! +\! 1\! +\! m_2 } \\
    &=& \Big. \kroncoeff_{(d,d)(2d-k,1^{k})\lambda} + \charf{ 1 \! \leq \! k  \! - \! 2m_2 \! \leq \! 2  } \!
        \charf{d   = \! \lambda_1 \! +\! m_2 } \! \charf{m_1\! = \! 0 } \Big. \\
    &=& \Big. \kroncoeff_{(d,d)(2d-k,1^{k})\lambda} + \charf{ m_2 \! = \! m_{k-1}  } \!
        \charf{\lambda_1 \! =\! d \! - \! m_{k-1} } \! \charf{m_1\! = \! 0 } \Big.\\
    &=& \Big. \kroncoeff_{(d,d)(2d-k,1^{k})(\lambda_1,\lambda_2,2^{m_2},1^{m_1})} + \charf{ \phi((\lambda_1,\lambda_2,2^{m_2},1^{m_1}))=\zeta_a } \Big..
\end{eqnarray*}
Finally, when $\lambda$ is not contained in a double hook, $\phi(\lambda)$ is also not contained in a double hook, so by Proposition \ref{ddcase},
$$ \kroncoeff_{(d+1,d+1) (2d+1-k,1^{k+1}) \phi(\lambda)} = 0 = 
\kroncoeff_{(d,d)(2d-k,1^{k})\lambda}.$$
Putting these three cases together, we have that
$$\sum_{ \zeta \in \textrm{Im}\phi} \kroncoeff_{(d+1,d+1) (2d+1-k,1^{k+1}) \zeta} \; s_\zeta = \, s_{\zeta_a} + \sum_{ \lambda \vdash 2d} \kroncoeff_{(d,d) (2d-k,1^{k}) \lambda} \; s_{\phi(\lambda)}.$$

We now examine the case when $\zeta \vdash 2d\!+\!2$ is not in the image of $\phi$; we need only consider $\zeta$ when it is a hook or double hook, as these are the only partitions for which $\kroncoeff_{(d+1,d+1) (2d+1-k,1^{k+1}) \, \zeta}$ might be non-zero.  If $\zeta=(\zeta_1, 1^{n_1})$ is a hook and $ \zeta \notin \textrm{Im} \phi$, then $(\zeta_1\!-\!1, 1^{n_1 -1})$ is not a partition, that is, $\zeta_1 =1$ or $n_1=0$.  Hence, $\zeta =(1^{2d+2})$ or $(2d\!+\!2)$, and from Proposition \ref{ddcase} part 3, we see that $\kroncoeff_{(d+1,d+1) (2d+1-k,1^{k+1}) \, \zeta}  = 0$ in either case for all $d$.

If $\zeta = (\zeta_1, \zeta_2, 2^{n_2}, 1^{n_1})$ is a double hook and $\zeta \notin \textrm{Im}\phi$, then $\zeta=(\zeta_1 \! -\! 1, \zeta_2, 2^{n_2}, 1^{n_1 - 1 })$ cannot be a partition, that is, $\zeta_1=\zeta_2$ or $n_1\! =\! 0$.  If $\zeta_1=\zeta_2$, then
$$\charf{d \! +\! 1 \! +\! a\! = \! \zeta_1\! +\! n_2 }=
    \charf{ \zeta_1\! +\! n_2 -\!a = d \! +\! 1  =  \zeta_2\! +\! n_2 \! +\! n_1 \! +\! a}=
    \charf{d \! +\! 1 \! +\! a\! = \! \zeta_1\! +\! n_2 }\charf{ -\!\!2a\! = \! n_1 }.$$
Combining the formula for $\kroncoeff_{(d+1,d+1) (2d+1-k,1^{k+1}) \, \zeta}$ from Proposition \ref{ddcase}, and the above, we get that
    \begin{eqnarray*}
        &&\kroncoeff_{(d+1,d+1) (2d+1-k,1^{k+1}) \, \zeta}\\
         & =& \charf{ 1 \! \leq \! k\! + \! 1\!- \! n_1 \! - \! 2n_2 \! \leq \! 2 \! } \!
            \Big[ \charf{d \! +\! 1 = \! \zeta_1\! +\! n_2 }\charf{ 0 \! = \! n_1 }
            \charf{1 \! \leq \!  n_1 }
            + \charf{d \! +\! 1 \! \pm \! 1 \! = \! \zeta_1\! +\! n_2 }\charf{ \mp\!2\! = \! n_1 } \Big] \\
            && +  \Big. \charf{ k \! + \! 1\! - \! n_1 \! - \! 2n_2 \! = \! 0 \textrm{ or } 3 \! }
            \charf{d\! +\! 1 \!=\! \zeta_1\! +\! n_2 }\charf{ 0 \! = \! n_1 } \Big.\\
        &=& \Big. \charf{ 1 \! \leq \! k\! - \! 1 - \! 2n_2 \! \leq \! 2 \! }
            \charf{d \! = \! \zeta_1\! +\! n_2 }\charf{ 2\! = \! n_1 } \Big.
            +  \charf{ k \! + \! 1 \! - \! 2n_2 \! = \! 0 \textrm{ or } 3 \! }
            \charf{d\! +\! 1 \!=\! \zeta_1\! +\! n_2 }\charf{ 0 \! = \! n_1 }\\
        &=& \Big. \charf{ n_2 \! = \! m_{k-2} \! }
            \charf{ \zeta_1\! = \! d  \!-\! m_{k-2} }\charf{ n_1 \! = \! 2} \Big.
           +  \charf{ n_2 \! =  m_e }
            \charf{\zeta_1  \!=\! d\! +\! 1 \! -\! m_e }\charf{n_1 \! =  0 }\\
        &=& \Big. \charf{ \zeta = \zeta_c } \Big. + \charf{ \zeta = \zeta_b }.
    \end{eqnarray*}
If $n_1\! =\! 0$, then
$$ \Big. \charf{d \! +\! 1 \! +\! a\! = \! \zeta_1\! +\! n_2 } \Big.=
    \charf{ \zeta_1\! +\! n_2 -\!a = d \! +\! 1  =  \zeta_2\! +\! n_2 \! +\! a}
    = \Big. \charf{d \! +\! 1 \! +\! a\! = \! \zeta_1\! +\! n_2 }
        \charf{ \zeta_1\! =  \zeta_2\! + 2a} \Big..$$
Combining the above with Proposition \ref{ddcase} gives
    \begin{eqnarray*}
       && \kroncoeff_{(d+1,d+1) (2d+1-k,1^{k+1}) \, \zeta}\\ & =& \charf{ 1 \! \leq \! k\! + \! 1\! - \! 2n_2 \! \leq \! 2 \! } \!
             \charf{d \! +\! 1 \! \pm \! 1 \! = \! \zeta_1\! +\! n_2 }
             \charf{ \zeta_1\! =  \zeta_2\! \pm 2} \\
            && +  \Big. \charf{ k \! + \! 1\! - \! 2n_2 \! = \! 0 \textrm{ or } 3 \! }
            \charf{d\! +\! 1 \!=\! \zeta_1\! +\! n_2 }\charf{ \zeta_1\! =  \zeta_2} \Big.
            \\
        &=& \Big. \charf{ 1 \! \leq \! k\! + \! 1 - \! 2n_2 \! \leq \! 2 \! }
            \charf{d \!+\! 2\! = \! \zeta_1\! +\! n_2 }\charf{ \zeta_1\! =  \zeta_2\! + \! 2} \Big.\\
         &&   +  \charf{ k \! + \! 1 \! - \! 2n_2 \! = \! 0 \textrm{ or } 3 \! }
            \charf{d\! +\! 1 \!=\! \zeta_1\! +\! n_2 }\charf{ \zeta_1\! =  \zeta_2}\\
        &=& \Big. \charf{ n_2 \! = \! m_k \! }
            \charf{ \zeta_1\! = \! d  \!+\! 2 \! -\! m_k }\charf{ \zeta_1\! =  \zeta_2\! + \! 2}
           +  \charf{ n_2 \! =  m_e }            \Big.
            \charf{\zeta_1  \!=\! d\! +\! 1 \! -\! m_e }\charf{ \zeta_1\! =  \zeta_2}\\
        &=& \Big. \charf{ \zeta = \zeta_d } \Big. + \charf{ \zeta = \zeta_b }.
    \end{eqnarray*}
These two formulas for $\kroncoeff_{(d+1,d+1) (2d+1-k,1^{k+1}) \, \zeta}$ show that
$$\sum_{ \zeta \not{\in} \textrm{Im}\phi} \kroncoeff_{(d+1,d+1) (2d+1-k,1^{k+1}) \zeta} \; s_\zeta = s_{\zeta_b}+s_{\zeta_c}+s_{\zeta_d},$$
which, when combined with the sum for $\textrm{Im}\phi$, completes the proof. \end{proof}

}

\comment{   
\begin{section}{The Kronecker product $s_{(d,d)}\ast s_{(d+ k,d- k)}$} \label{two row case}

In this section, we present a theorem that fully describes the product $s_{(d,d)}\ast s_{(d+\! k,d-\! k)}$ in terms of what we have called rugs.  We begin with the definition of a rug.

\begin{definition}\label{schur set def}
    Let $X$ be a set of partitions of $2d$, then we define
    $$\rug{X}:=\sum_{\lambda \in X}s_\lambda$$
    to be a rug, and call $X$ the  index set of the rug $\rug{X}$.
If $X$ and $Y$ are disjoint sets of partitions, then $$\rug{X\uplus Y}=\rug{X} +\rug{Y},$$
where $\uplus$ denotes  disjoint union. \end{definition}

In the Kronecker product of two two-row shapes, the coefficient $\kroncoeff_{\mu\nu\lambda}$ is zero if $\lambda$ has more than 4 parts by \cite[Theorem 1.6]{Dvir}, so we take $$ \lambda = (\lambda_1,\lambda_2,\lambda_3, \lambda_4) $$ where some parts may be zero, say $\lambda$ is a \emph{four part partition} and restrict our attention to such partitions from now on.

We say that the differences $\lambda_1-\lambda_2$, $\lambda_2-\lambda_3$, and $\lambda_3-\lambda_4$ are \emph{gaps}, and the differences $ \lambda_1-\lambda_3$, and $\lambda_2-\lambda_4 $ are \emph{double gaps}.   For each four part partitions $\lambda$, we define $g_\lambda$ to be the the size of the smallest gap and $d_\lambda$ the size of the smallest double gap, that is,
$$\begin{array}{l}
    \Big. g_\lambda  =min\{\lambda_1-\lambda_2, \lambda_2-\lambda_3, \lambda_3-\lambda_4 \},  \Big. \\
     d_\lambda = min \{ \lambda_1-\lambda_3, \lambda_2-\lambda_4 \}.
\end{array}$$

Let the \emph{gap set}, $G_i$, be the set of four part partitions such that all their gaps are at least  $i$, and the \emph{double gap set}, $D_i$, be the set where all their double gaps are at least $i$, that is,
\begin{eqnarray*}
    G_i &=& \Big. \{ \lambda : i \leq g_\lambda \} \Big. \\
    D_i &=& \{ \lambda : i \leq d_\lambda \}.
\end{eqnarray*}
It is easy to see that $G_i \subset D_i$, and if $i \leq j$, then $G_j \subseteq G_i$ and $D_j \subseteq D_i$.

Define  $P$ to be the set of partitions that satisfy  $\lambda_1-\lambda_2$, $\lambda_2-\lambda_3$, and $\lambda_3-\lambda_4$ are all even integers.  This requirement means that all the parts of all partitions in $P$ have the same parity, odd or even:
$$P=\{\lambda \ | \ \lambda _1\equiv \lambda _2\equiv \lambda _3 \equiv \lambda _4 \mod 2\}.$$  In the set of all four part partitions of $2d$, $\overline{P}$ is the complement of $P$, and is the set of four part partitions that have at least one odd gap, and so its partitions have two odd and two even parts as all the parts must add to an even number.

Our results also require the definition of three other sets:
    \begin{itemize}
        \item $ X_k = \{ \lambda \textrm{ has all odd gaps} \} \cap
                \{ \lambda \; | \; \lambda_1\!-\!\lambda_2 < k,  \textrm{ or } \lambda_3\!-\!\lambda_4 < k \} \cap
                \{ \lambda \; | \; \lambda_2\!-\!\lambda_3 < k \} $
        \item $ Y_k = \{ \lambda \; | \; \lambda_2\!-\!\lambda_3 \textrm{ even and other gaps odd} \} \cap
                    \{ \lambda \; | \; \lambda_2\!-\!\lambda_3 < k \}$
        \item $Z_k = \{ \lambda \; | \; \lambda_2\!-\!\lambda_3 \textrm{ odd and other gaps even} \} \cap
                    \{ \lambda \; | \; \lambda_1\!-\!\lambda_2<k,  \textrm{ or } \lambda_3\!-\!\lambda_4 < k \}$.
    \end{itemize}

We are now in a position to state the decomposition of $s_{(d,d)}\ast s_{(d+\! k,d-\! k)}$ into rugs.

}\comment{  

\begin{theorem} \label{main theo}  Let $\mu=(d,d)$ and $\nu = (d \! + \! k,d \! - \! k)$.  Then
\begin{eqnarray*}
        s_{\mu}\ast s_{\nu} & = & \sum_{ \stackrel{1 \leq \, a \, \leq k-1  }{ a \, \textrm{ odd} }} \rug{ G_a \! \cap \! D_{a+k} } +
        \left\{  \begin{array}{ll}
            \rug{  D_k \! \cap \! P } + \rug{  D_k \! \cap \! X_k } & \textrm{if k is even} \\ \\
            \rug{  G_k \! \cap \! \overline{P} } + \rug{  D_k \! \cap \! \big( Y_k \uplus Z_k \big) } & \textrm{if k is odd.}
        \end{array} \right.
    \end{eqnarray*}
\end{theorem}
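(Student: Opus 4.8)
The plan is to deduce this statement from Theorem~\ref{cleanest} by checking that the two descriptions assign the same coefficient to each $s_\lambda$; since the coefficient of $s_\lambda$ in a sum of rugs is just the number of index sets containing $\lambda$, everything reduces to a counting identity on the three gaps of $\lambda=(\lambda_1,\lambda_2,\lambda_3,\lambda_4)$. Write $g_1=\lambda_1-\lambda_2$, $g_2=\lambda_2-\lambda_3$, $g_3=\lambda_3-\lambda_4$, so the double gaps are $g_1+g_2$ and $g_2+g_3$; because $|\lambda|=2d$ is even one checks $g_1\equiv g_3\pmod 2$, so the parity data of $\lambda$ is a single pair $(\epsilon_1,\epsilon_2)$ with $\epsilon_1\equiv g_1\equiv g_3$ and $\epsilon_2\equiv g_2$. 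The cases $\ell(\lambda)\le 2$ follow directly from \eqref{twocol}, so I may assume $\lambda$ has three or four parts.

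First I would rewrite the right-hand side of \eqref{cleaneq}. Unwinding the definition of $\gamma P$, the condition $\lambda\in(k{+}i,k,i)P$ is equivalent to $g_1\ge i,\ g_1\equiv i$; $g_2\ge k-i,\ g_2\equiv k-i$; $g_3\ge i,\ g_3\equiv i\pmod 2$, while $\lambda\in(k{+}i{+}1,k{+}1,i)P$ replaces the middle pair by $g_2\ge k+1-i$, $g_2\equiv k+1-i$. The parity constraints force $i\equiv\epsilon_1$, and then only one of the two sums in \eqref{cleaneq} can be nonempty, according to whether $k\equiv\epsilon_1+\epsilon_2$ or $k\equiv\epsilon_1+\epsilon_2+1\pmod 2$. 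In either case the coefficient $\kroncoeff_{(d,d)(d+k,d-k)\lambda}$ equals the number of integers $i\equiv\epsilon_1\pmod 2$ in a single interval with upper endpoint $\min(k,g_1,g_3)$ and lower endpoint $\max(0,k-g_2)$ or $\max(1,k+1-g_2)$ according to the case; this interval count is the form I would carry forward.

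Next I would compute the coefficient assigned by the present statement, which by definition is the number of its index sets containing $\lambda$. Reading off the defining inequalities, $\lambda\in G_a\cap D_{a+k}$ means $\min(g_1,g_2,g_3)\ge a$ together with $g_1+g_2\ge a+k$ and $g_2+g_3\ge a+k$, and the remaining sets ($\rug{D_k\cap P}$, $\rug{G_k\cap\overline P}$, $\rug{D_k\cap X_k}$, $\rug{D_k\cap(Y_k\uplus Z_k)}$) translate similarly into conditions on the $g_i$ and their parities. The verification is then a finite case analysis on the four parity classes $(\epsilon_1,\epsilon_2)$ and on $k\bmod 2$: in each class both the interval count of the previous paragraph and the number of containing index sets become explicit functions of $g_1,g_2,g_3$, and one checks they agree, using $g_1\equiv g_3$ and $|\lambda|=2d$ to resolve membership in $X_k,Y_k,Z_k$. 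Conceptually these index sets are nested, which is why the common value never exceeds $\lfloor k/2\rfloor+1$.

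The main obstacle is exactly this matching of counts: \eqref{cleaneq} is phrased purely through gaps, whereas the present statement mixes gaps ($G_a$) with double gaps ($D_{a+k}$) and the auxiliary sets $X_k,Y_k,Z_k$, so one must show that the double-gap thresholds $g_1+g_2\ge a+k$, $g_2+g_3\ge a+k$ reassemble the interval endpoints $\max(0,k-g_2)$ and $\min(k,g_1,g_3)$, and that $X_k,Y_k,Z_k$ contribute at exactly the endpoint values of $i$---no more, no less---in the cases where one of $g_1,g_3$ drops below $k$. As a self-contained alternative one can bypass Theorem~\ref{cleanest} and argue directly by induction on $d$ via Lemma~\ref{lem:recurrence}: the length-four case is immediate since $\lambda\mapsto\lambda-(1^4)$ fixes every gap, double gap and parity defining $G_i,D_i,P,\overline P,X_k,Y_k,Z_k$; the length-three case is handled by the two-term recurrence \eqref{threecol}, expanding $s_{(1)}s_{\lambda-(1^3)}$ and $s_{(1)}^\perp s_{\lambda-(1^3)}$ and tracking rug memberships exactly as in the proof of Theorem~\ref{cleanest}; and the length-two base case is \eqref{twocol}.
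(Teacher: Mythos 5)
Your proposal is correct in substance, but it takes a genuinely different route from the paper's own proof. The paper does not derive this rug decomposition from Theorem~\ref{cleanest} at all: it first establishes a standalone closed formula for $\kroncoeff_{(d,d)(d+k,d-k)\lambda}$ as a sum of characteristic functions in the statistics $g_\lambda$, $d_\lambda$, $M=\min\{\lambda_1-\lambda_2,\lambda_3-\lambda_4\}$ and $\lambda_2-\lambda_3$ (with parity conditions), extracted from Rosas's lattice-point theorem \cite[Theorem 1]{Rosas} by counting points reachable via north-west/south-west paths inside two rectangles, reflecting the ``negative'' rectangle across a horizontal line, reducing to two line segments, and simplifying ceiling functions; the theorem then follows by observing that each rug's membership indicator $\charf{\lambda\in G_a\cap D_{a+k}}$, $\charf{\lambda\in D_k\cap P}$, etc., is exactly one term of that formula. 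You instead leverage the already-proven Theorem~\ref{cleanest}, translating $\lambda\in(k{+}i,k,i)P$ and $\lambda\in(k{+}i{+}1,k{+}1,i)P$ into gap inequalities and parities, using $g_1\equiv g_3\pmod 2$ (which the paper also exploits) to collapse the coefficient to a count of $i\equiv\epsilon_1$ in a single interval, and then matching that count against the number of rugs containing $\lambda$ by a finite parity case analysis. Your setup is sound --- I checked the interval endpoints $\max(0,k-g_2)$ or $\max(1,k{+}1-g_2)$ and $\min(k,g_1,g_3)$, the mutual exclusivity of the two sums in \eqref{cleaneq}, and the disjointness of $D_k\cap P$ from $D_k\cap X_k$ (and of $G_k\cap\overline{P}$ from $D_k\cap(Y_k\uplus Z_k)$), which gives the bound $\lfloor k/2\rfloor+1$ --- though you should be aware that the concluding case check, while finite and routine, is genuinely where the sets $X_k,Y_k,Z_k$ earn their definitions as endpoint corrections, and a complete write-up must grind through all four parity classes. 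What each approach buys: yours is elementary, reuses proven material, and makes the equivalence of the two descriptions of the product explicit; the paper's is logically independent of Theorem~\ref{cleanest} (so it serves as a second derivation of the coefficients) and its intermediate characteristic-function formula directly yields corollaries such as the identification of the partitions attaining the maximal coefficient. Your fallback suggestion --- reproving the rug formula by induction on $d$ via Lemma~\ref{lem:recurrence} --- would work but essentially replicates the published proof of Theorem~\ref{cleanest} with different bookkeeping, so it adds little over the comparison argument you lead with.
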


The proof of Theorem \ref{main theo} relies heavily on the following proposition, whose proof we postpone until the next section.

    \begin{proposition} \label{form prop}
        Let $ \mu = (d,d)$, $\nu = (d \! + \! k,d \! - \! k)$, and $\lambda = (\lambda_1, \lambda_2, \lambda_3, \lambda_4)$ all be partitions of $2d$.  Let $M= min\{\lambda_1  \! -  \! \lambda_2, \lambda_3  \! -  \! \lambda_4\}$. Then
   \begin{equation}  \label{Formula}  \begin{array}{rcl} 
                \kroncoeff_{\mu\nu\lambda} & = &
                \sum_{ \stackrel{1 \leq \, a \, \leq k-1  }{ a \; odd}}
                \charf{ a  \! \leq g_\lambda  }
                \charf{ a  \! +  \! k \leq d_\lambda }
                \\  & & +  \charf{ k \textrm{ even}   }
                \charf{ k  \! \leq  \! d_\lambda }
                \left( \begin{array}{l}
                    \Big. \charf{ M  \! <  \! k }
                    \charf{ \lambda_2  \! -  \! \lambda_3  \! <  \! k }
                    \charf{ M \textrm{ odd}  }
                    \charf{ \lambda_2  \! -  \! \lambda_3 \textrm{ odd}   } \Big. \\
                  + \Big. \charf{ M \textrm{ even}   }
                    \charf{ \lambda_2  \! -  \! \lambda_3 \textrm{ even}   } \Big.
                \end {array} \right)
                \\
                \\ & & +  \charf{ k \textrm{ odd}   }
                \charf{ k  \! \leq  \! d_\lambda }
                \left( \begin{array}{l}
                    \Big. \charf{ \lambda_2  \! -  \! \lambda_3  \! <  \! k }
                    \charf{ M \textrm{ odd}  }
                    \charf{ \lambda_2  \! -  \! \lambda_3 \textrm{ even}   } \Big.
\\
                     + \Big. \charf{ M  \! <  \! k }
                    \charf{ M \textrm{ even}  }
                    \charf{ \lambda_2  \! -  \! \lambda_3 \textrm{ odd}  } \Big.
\\
                    + \Big. \charf{ k  \! \leq  \! g_\lambda }
                    \charf{ M \textrm{ odd or } \lambda_2  \! -  \! \lambda_3 \textrm{ odd}  } \Big.
                \end{array} \right).\end{array}
        \end{equation}
    \end{proposition}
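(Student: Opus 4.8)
The plan is to derive Proposition~\ref{form prop} from Theorem~\ref{cleanest} by checking that the two expressions for $\kroncoeff_{\mu\nu\lambda}$ coincide as functions of the three gaps $g_1 = \lambda_1 - \lambda_2$, $g_2 = \lambda_2 - \lambda_3$, $g_3 = \lambda_3 - \lambda_4$ of the four-part partition $\lambda$ (both sides vanish once $\ell(\lambda) > 4$, by Lemma~\ref{lem:recurrence}). Write $M = \min(g_1, g_3)$, so that $g_\lambda = \min(M, g_2)$ and $d_\lambda = M + g_2$, and set $\epsilon = g_1 \bmod 2$ and $\epsilon_2 = g_2 \bmod 2$. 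Since $|\lambda| = 4\lambda_4 + 3 g_3 + 2 g_2 + g_1 = 2d$ is even, $g_1 + 3 g_3 \equiv 0 \pmod 2$, so $g_1 \equiv g_3 \pmod 2$; thus $M \equiv g_1 \equiv g_3$ and $\epsilon$ is the common parity of $g_1, g_3, M$.

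First I would expand the indicator functions of Theorem~\ref{cleanest}. Requiring $\lambda - (k{+}i, k, i)$ and $\lambda - (k{+}i{+}1, k{+}1, i)$ to be weakly decreasing, non-negative, and of constant parity turns each summand into a conjunction of inequalities and congruences in the gaps. One finds that the $i$-th term of the first sum survives exactly when $i \equiv \epsilon \pmod 2$, $\max(0, k{-}g_2) \leq i \leq \min(M, k)$ and $g_2 \equiv k - \epsilon \pmod 2$, while the $i$-th term of the second sum survives exactly when $i \equiv \epsilon$, $\max(1, k{+}1{-}g_2) \leq i \leq \min(M, k)$ and $g_2 \equiv k{+}1{-}\epsilon \pmod 2$. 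The two congruence conditions on $g_2$ are incompatible, so for any fixed $\lambda$ at most one of the two sums is nonzero: the first when $d_\lambda \equiv k \pmod 2$ and the second when $d_\lambda \equiv k{+}1$. In either case the surviving sum equals the number of integers $i \equiv \epsilon \pmod 2$ in an explicit interval $[L, \min(M,k)]$, with $L = \max(0, k{-}g_2)$ or $\max(1, k{+}1{-}g_2)$ respectively.

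Next I would bring the right-hand side of Proposition~\ref{form prop} into the same shape. Its leading term is the arithmetic-progression count $\#\{a \text{ odd} : 1 \leq a \leq \min(k{-}1, g_\lambda, d_\lambda - k)\}$, and the bracketed corrections encode, through the parity indicators $\charf{M \text{ odd}}$, $\charf{g_2 \text{ odd}}$ (and their even counterparts) together with the size thresholds $\charf{M < k}$, $\charf{g_2 < k}$, $\charf{k \leq g_\lambda}$, $\charf{k \leq d_\lambda}$, the endpoint adjustments relating this odd-$a$ count to the fixed-parity count of the previous paragraph. The heart of the proof is then the elementary identity between these two counts, which I would verify by cases: first split on the parity of $k$ (this selects the even- or odd-$k$ correction block), then split on the four parity patterns $(\epsilon, \epsilon_2)$ (this selects which sum of Theorem~\ref{cleanest} is active and which inner summands of the correction fire), and within each case compare using $\charf{M < k}$ and $\charf{g_2 < k}$, which are exactly the conditions deciding whether $\min(M,k)$ equals $M$ or $k$ and which of the two alternatives defines $L$.

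The main obstacle is precisely this endpoint-and-parity bookkeeping: converting a count of integers $i \equiv \epsilon \pmod 2$ in an interval into a count of \emph{odd} $a$ forces one to track the parity $\epsilon$ and whether each endpoint is attained with the correct parity, and the correction brackets in Proposition~\ref{form prop} are exactly these boundary terms, so the risk is purely off-by-one errors in finitely many cases. Organizing the verification by $k \bmod 2$ and then by $(\epsilon, \epsilon_2)$ keeps each case to a one-line check, and the extreme cases $k = 0, 1$ already reproduce \eqref{eq:ke0} and \eqref{eq:ke1}. Alternatively, one could prove Proposition~\ref{form prop} directly by induction on $d$ from the recurrences \eqref{fourcol} and \eqref{threecol} of Lemma~\ref{lem:recurrence}, mirroring the proof of Theorem~\ref{cleanest}; the reduction above is shorter because it reuses that theorem.
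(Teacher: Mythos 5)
Your proof is correct, but it takes a genuinely different route from the paper's. You derive the proposition from Theorem \ref{cleanest} by expanding each indicator $\charf{\lambda\in(k{+}i,k,i)P}$ and $\charf{\lambda\in(k{+}i{+}1,k{+}1,i)P}$ in the gap coordinates $g_1,g_2,g_3$: your survival conditions are exactly right, as is the key observation that the two congruence constraints on $g_2$ are incompatible, so the two sums in Theorem \ref{cleanest} are mutually exclusive according to whether $d_\lambda\equiv k$ or $d_\lambda\equiv k+1\pmod 2$; what remains is a finite comparison of two fixed-parity interval counts, which your organization by $k\bmod 2$ and $(\epsilon,\epsilon_2)$ reduces to routine checks. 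The paper proceeds in the opposite direction and never uses Theorem \ref{cleanest}: it starts from Rosas's lattice-point formula $\kroncoeff_{\mu\nu\lambda}=(\Gamma_P-\Gamma_N)(\nu_2,\mu_2+1)$ of \cite[Theorem 1]{Rosas}, reflects the rectangle $N$ across the line $y=\mu_2+1$, observes that the overlap of $P$ with the reflected rectangle cancels so that the coefficient is a difference of reachable-point counts over just two horizontal segments (the top edge of $P$ and the bottom edge of the reflection), evaluates those one-row counts piecewise, obtains a five-case ceiling-function formula for $\kroncoeff_{\mu\nu\lambda}$, and finally converts the ceilings into sums of odd indicators via $\lceil t/2\rceil=\sum_{a\ \text{odd}}\charf{a\le t}$ to assemble the stated expression. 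Two points of comparison. First, a logical caveat you should make explicit: in the paper's intended architecture this proposition is the \emph{input} to the rug decomposition of $s_{(d,d)}\ast s_{(d+k,d-k)}$, so your derivation reverses the flow; it is nonetheless non-circular, because the published proof of Theorem \ref{cleanest} is a self-contained induction on $d$ via Lemma \ref{lem:recurrence} and does not rely on this proposition --- but that sentence is essential to your write-up. Second, what each approach buys: yours is shorter and stays entirely inside the elementary indicator calculus once Theorem \ref{cleanest} is available, while the paper's route is independent of the main theorem and produces along the way explicit closed-form values (ceilings of $(\lambda_2-\lambda_3+M-k)/2$, of $M/2$, etc.) that are useful in their own right, for instance in locating the partitions achieving the maximal coefficient $\lfloor k/2\rfloor+1$.
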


We now proceed with the proof of Theorem~\ref{main theo}.

\begin{proof}
The Kronecker product for $ \mu = (d,d)$, and $\nu = (d \! + \! k,d \! - \! k)$ is given by
$$s_{\mu}\ast s_{\nu} = \sum_{ \lambda \vdash 2d} \kroncoeff_{\mu\nu\lambda} s_\lambda, $$
so the result will be established if we can show that the coefficient of $s_\lambda$ in the rug decomposition given in Theorem \ref{main theo} is equal to $\kroncoeff_{\mu\nu\lambda}$ given by \eqref{Formula}.

We first note the following equation that the gaps of a four part partition $\lambda$ must satisfy
$$ 2d= 4\lambda_4 +3(\lambda_3\!-\!\lambda_4)+2(\lambda_2\!-\!\lambda_3)+(\lambda_1\!-\!\lambda_2).$$
Since the left side is even and $\lambda_3\!-\!\lambda_4$ and $\lambda_1\!-\!\lambda_2$ have odd coefficients, $\lambda _1-\lambda _2$ and $\lambda _3-\lambda _4$ must have the same parity, so if $M=min\{\lambda_1  \! -  \! \lambda_2, \lambda_3  \! -  \! \lambda_4\}$ is even, then $\lambda _1-\lambda _2$ and $\lambda _3-\lambda _4$ are both even, and similarly if $M$ is odd.
\end{proof}}\comment{\begin{proof}(continued) 
Now, let $b$ be a non-negative integer. From the notes and set definitions proceeding Theorem \ref{main theo}, we have     \begin{eqnarray*}
        \charf{ \lambda \in G_b } & =& \Big. \charf{ b  \! \leq g_\lambda  }, \Big.
\\
        \charf{ \lambda \in D_{b} } & = &
            \Big. \charf{ b  \leq d_\lambda }, \Big.
\\
        \charf{ \lambda \in P } & = & \Big. \charf{ M \textrm{ even}   }
                    \charf{ \lambda_2  \! -  \! \lambda_3 \textrm{ even}   }, \Big.
\\
        \charf{ \lambda \in \overline{P} } & = & \Big.
            \charf{ M \textrm{ odd or } \lambda_2  \! -  \! \lambda_3 \textrm{ odd}  }, \Big.
\\
        \charf{ \lambda \in X_k } & = & \Big. \charf{ M  \! <  \! k }
                    \charf{ \lambda_2  \! -  \! \lambda_3  \! <  \! k }
                    \charf{ M \textrm{ odd}  }
                    \charf{ \lambda_2  \! -  \! \lambda_3 \textrm{ odd}   }, \Big.
\\
        \charf{ \lambda \in Y_k } & = & \Big. \charf{ \lambda_2  \! -  \! \lambda_3  \! <  \! k }
                    \charf{ M \textrm{ odd}  }
                    \charf{ \lambda_2  \! -  \! \lambda_3 \textrm{ even}   }, \Big.
\\
        \charf{ \lambda \in Z_k } & = & \Big. \charf{ M  \! <  \! k }
                    \charf{ M \textrm{ even}  }
                    \charf{ \lambda_2  \! -  \! \lambda_3 \textrm{ odd}  }. \Big.
        \end{eqnarray*}

A set intersection is equivalent to multiplying propositional functions, and a disjoint union corresponds to adding them, so every rug appearing in Theorem~\ref{main theo} corresponds to a unique term of \eqref{Formula}.  Since $\lambda$ satisfying a term of \eqref{Formula} and $s_\lambda$ appearing in a rug both add one to the coefficient of $s_\lambda$ in $s_{\mu}\ast s_{\nu}$, a Schur function's coefficient in the rug decomposition of the theorem is equal to $\kroncoeff_{\mu\nu\lambda}$ given by \eqref{Formula}, establishing the result.\end{proof}

\begin{corollary}\label{max coeff}
The maximum coefficient of a Schur function in $s_{(d,d)}\ast s_{(d+k,d-k)}$ is $\left\lfloor \frac{k}{2}\right\rfloor\!+\! 1 $, and it occurs at Schur functions indexed by partitions in the set
$$\left. \begin{array}{ll}
            \Big. P \! \cap \! G_{k-1}  & \textrm{if k is even,} \Big. \\
            \Big. \overline{P} \! \cap \! G_{k-1}  & \textrm{if k is odd.} \Big.
    \end{array}\right. $$
\end{corollary}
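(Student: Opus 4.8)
The plan is to read the coefficient $\kroncoeff_{\mu\nu\lambda}$ directly off the rug decomposition of Theorem~\ref{main theo} (equivalently, off the explicit formula of Proposition~\ref{form prop}) and then to determine exactly when it is largest. With $\mu=(d,d)$ and $\nu=(d+k,d-k)$, Theorem~\ref{main theo} writes $s_\mu\ast s_\nu$ as a sum of rugs indexed by the sets $G_a\cap D_{a+k}$, one for each odd $a$ with $1\le a\le k-1$, together with two further rugs whose index sets are disjoint. Hence the coefficient of $s_\lambda$ equals the number of odd $a\in[1,k-1]$ with $\lambda\in G_a\cap D_{a+k}$, plus the indicator that $\lambda$ lies in one of the two extra index sets. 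Since there are exactly $\lfloor k/2\rfloor$ odd values of $a$ in $[1,k-1]$ and the two extra sets are disjoint, this at once yields the upper bound $\kroncoeff_{\mu\nu\lambda}\le\lfloor k/2\rfloor+1$.

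First I would identify when the counting summand is maximal. Because $G_a\supseteq G_{a'}$ and $D_{a+k}\supseteq D_{a'+k}$ whenever $a\le a'$, the sets $G_a\cap D_{a+k}$ are nested, so $\lambda$ lies in all $\lfloor k/2\rfloor$ of them exactly when it lies in the one for the largest odd $a\le k-1$: that is, $\lambda\in G_{k-1}\cap D_{2k-1}$ when $k$ is even, and $\lambda\in G_{k-2}\cap D_{2k-2}$ when $k$ is odd. Thus $\kroncoeff_{\mu\nu\lambda}=\lfloor k/2\rfloor+1$ precisely when this membership holds \emph{and} $\lambda$ lies in one of the two extra index sets, and it remains to simplify this conjunction to the single condition claimed.

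For $k$ even the extra sets are $D_k\cap P$ and $D_k\cap X_k$. If $\lambda\in P$ then all gaps are even, so membership in $G_{k-1}$ forces all gaps $\ge k$ and the double-gap bounds $D_{2k-1}$, $D_k$ follow automatically; this gives precisely $P\cap G_{k-1}$. If instead $\lambda\in X_k$ then its middle gap and one outer gap are both $<k$, hence equal $k-1$ under $G_{k-1}$, which caps some double gap at $2k-2$ and contradicts $D_{2k-1}$; so $X_k$ cannot coexist with a maximal counting summand, leaving exactly $P\cap G_{k-1}$. For $k$ odd the extra sets are $G_k\cap\overline{P}$ and $D_k\cap(Y_k\uplus Z_k)$. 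Here membership in $Y_k$ or $Z_k$ pins one gap of each parity to $k-1$ via $G_{k-2}$, and forces the remaining odd-parity gap either to equal $k-2$ or to be at least $k$; the constraint $D_{2k-2}$ rules out the value $k-2$ (it would make a double gap $2k-3$), so the parities and gap bounds combine to give $\overline{P}\cap G_{k-1}$. Conversely I would check that every $\lambda\in\overline{P}\cap G_{k-1}$ lands in one of these extra sets by splitting on the parity pattern that $\overline{P}$ imposes: all gaps odd gives $G_k\cap\overline{P}$, an even middle gap equal to $k-1$ gives $Y_k$, and an outer gap equal to $k-1$ gives $Z_k$. Combining the two parities then delivers the maximum $\lfloor k/2\rfloor+1$ and its location.

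I expect the main obstacle to be the parity bookkeeping in the odd-$k$ case: one must simultaneously track the parities of the three gaps, the exact values forced by $G_{k-1}$ versus $G_{k-2}$, and the double-gap inequalities from $D_{2k-2}$, and verify that the defective configurations inside $X_k$, $Y_k$, $Z_k$ are exactly those excluded by the double-gap requirement. This interplay is precisely what upgrades $G_{k-2}$ to $G_{k-1}$ in the odd case (and $G_{k-1}$ to the effective $G_k$ in the even case), and it is the one step not already handed to us by the nesting and disjointness recorded in Theorem~\ref{main theo}.
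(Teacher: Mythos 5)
Your proposal is correct and takes essentially the same route as the paper's own proof: the same nesting of the index sets $G_a \cap D_{a+k}$ yields the upper bound $\lfloor k/2\rfloor + 1$, the same double-gap contradiction shows $G_{k-1}\cap D_{2k-1}\cap D_k\cap X_k=\emptyset$ in the even case, and the same forcing of gaps to equal $k-1$ via $G_{k-2}$ together with $D_{2k-2}$ identifies the union of the remaining index sets with $P\cap G_{k-1}$ (respectively $\overline{P}\cap G_{k-1}$). The only cosmetic difference is that you verify the odd-$k$ identification by a two-directional case split on the gap parities, where the paper computes the union $(G_k\cap\overline{P})\cup(G_{k-1}\cap Y_k)\cup(G_{k-1}\cap Z_k)=\overline{P}\cap G_{k-1}$ by direct set manipulations.
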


\begin{proof} From  Theorem~\ref{main theo}, we see that a Schur function with maximum coefficient must appear in all the rugs of the sum, and one of the rugs that depends on $k$ being even or odd.  There are $\left\lfloor \frac{k}{2}\right\rfloor $ rugs in the sum in the formula, so $\left\lfloor \frac{k}{2}\right\rfloor +1 $ is the maximum coefficient.

Now, if $c \leq b$, then $G_b \subseteq G_c$ and $D_{b+k} \subseteq D_{c+k}$, so for all $1\leq a\leq k-1$,
\begin{eqnarray*}
    \big. G_{k-1} \! \cap \! D_{2k-1} \subseteq G_a \! \cap \! D_{a+k} \big. && \textrm{if $k$ is even, }\\
    \big. G_{k-2} \! \cap \! D_{2k-2} \subseteq G_a \! \cap \! D_{a+k} \big. && \textrm{if $k$ is odd.}
\end{eqnarray*}
So if $\lambda \in G_{k-1} \! \cap \! D_{2k-1}$ and $k$ is even, or $G_{k-2} \! \cap \! D_{2k-2}$ and $k$ is odd, then $s_\lambda$ appears in all the rugs of the sum.  To have the highest coefficient, $s_\lambda$ must also appear in one of the remaining rugs.
\end{proof}}\comment{\begin{proof}(continued)   
Take $k$ to be even, and let $\lambda \in G_{k-1} \! \cap \! D_{2k-1}$.  For $s_\lambda$ to have the highest coefficient, $\lambda$ must be in one of the remaining disjoint index sets.  Suppose first that $\lambda \in D_k \! \cap \! X_k$.  Then $\lambda _2-\lambda _3$ and one of $\lambda _1-\lambda _2$ or $\lambda _3-\lambda _4$ is strictly less than $k$, and all the gaps of $\lambda$ are greater than or equal to $k-1$. Hence, $\lambda _2-\lambda _3$ and one of $\lambda _1-\lambda _2$ or $\lambda _3-\lambda _4$ are equal to $k-1$.  Thus, one of $\lambda _1-\lambda _3$ or $\lambda _2-\lambda _4$ is $2k-2$, and so
$$D_{2k-1} \! \cap \! G_{k-1} \! \cap \! D_k \! \cap X_k =\emptyset. $$ Now, suppose $\lambda \in D_k \! \cap \! P$. Then
\begin{eqnarray*}
    \lambda \in G_{k-1} \! \cap \! D_{2k-1}\! \cap \!D_k \! \cap \!  P &=& G_{k-1} \! \cap \! D_{2k-1}\! \cap \!  P\\
        &=& G_{k} \! \cap \! D_{2k-1}\! \cap \!  P \quad \quad (\textrm{as all gaps must be even})\\
        &=& G_{k} \! \cap \!  P \quad \quad \quad \quad (\textrm{as all double gaps at least 2k})\\
        &=& G_{k-1} \! \cap \! P \quad \quad \quad \quad (\textrm{as $k-1$ is odd}),
\end{eqnarray*}
as required when $k$ is even.

Take $k$ to be odd.  We intersect $G_{k-2} \! \cap \! D_{2k-2}$ with $ G_k \! \cap \! \overline{P}$  and each disjoint set of the remaining rug.  For the first set, the intersection gives
$$ G_{k-2} \! \cap \! D_{2k-2} \! \cap \! G_k \! \cap \!  \overline{P}=G_k \! \cap \!  \overline{P}.$$
For the second, we have that if
$$\lambda \in G_{k-2} \! \cap \! D_{2k-2} \! \cap \! D_k \! \cap \!  Y_k
= G_{k-2} \! \cap \! D_{2k-2}\! \cap \!  Y_k,$$
then by the definition of $Y_k$ we have $\lambda _2-\lambda _3$ is even and obeys $k\! -\! 2 \leq \lambda_2\! -\! \lambda_3 < k$, so $k\! -\! 1 = \lambda_2\! -\! \lambda_3$.  The double gaps are greater than or equal to $2k-2$, and $k\! -\! 1 = \lambda_2\! -\! \lambda_3$, so $\lambda _1-\lambda _2$ and $\lambda _3-\lambda _4$ are greater than or equal to $k-1$. Hence, we have that
\begin{equation*} \label{sec int}
     G_{k-2} \! \cap \! D_{2k-2}\! \cap \!  Y_k
    = G_{k-1} \! \cap \! Y_k  .
\end{equation*}
Since $k-1$ is even, if $\lambda \in G_{k-1}  \! \cap \! \{ \lambda \; | \; \lambda_2\!-\!\lambda_3 < k \}$, then $\lambda_2\! -\! \lambda_3 \! = \! k\! -\! 1$ is even, so we have that $$ G_{k-1} \! \cap \! Y_k = G_{k-1}  \! \cap \! \overline{P} \! \cap\! \{ \lambda \; | \; \lambda_2\!-\!\lambda_3 < k \}. $$ Using a similar argument for $G_{k-2} \! \cap \! D_{2k-2} \! \cap \! D_k \! \cap \! Z_k$, we have that this set equals
$$ G_{k-1} \! \cap \! Z_k = G_{k-1} \! \cap \!  \overline{P}  \cap   \{ \lambda \; | \; \lambda_1\!-\!\lambda_2 < k,  \textrm{ or } \lambda_3\!-\!\lambda_4 < k \} .$$

We note that $\{ \lambda \; | \; \lambda_1\!-\!\lambda_2 < k,  \textrm{ or } \lambda_3\!-\!\lambda_4 < k \} \cup \{ \lambda \; | \; \lambda_2\!-\!\lambda_3 < k \}= \overline{G}_k$, the complement of $G_k$, so the union the $ (G_{k-1}  \cap  Y_k) \cup (G_{k-1}  \cap   Z_k )= G_{k-1} \! \cap \! \overline{P} \! \cap \!\overline{G}_k$. Finally, we have that the union of all three sets is
\begin{eqnarray*}
        (G_{k}  \cap  \overline{P}) \cup (G_{k-1}  \cap  Y_k) \cup (G_{k-1}  \cap   Z_k ) &=&
\Big. (G_{k}  \cap  \overline{P}) \cup (G_{k-1} \! \cap \! \overline{P} \! \cap \!\overline{G}_k) \Big.  \\
    &=& \Big. \overline{P} \cap \left(G_k \cup \left( G_{k-1}  \! \cap \! \overline{G}_k\right) \right)\Big. \\
    &=& \Big. \overline{P} \cap \left(\left( G_{k-1} \cup G_k \right)  \! \cap \! \left(\overline{G}_k\right) \cup G_k \right)\Big. \\
    &=& \overline{P} \cap  G_{k-1},
\end{eqnarray*}
as required. \end{proof}

}\comment{

\begin{corollary}
    $$s_{(d,d)}\ast s_{(d,d)}=\rug{P}$$and $$s_{(d,d)}\ast s_{(d+1,d-1)}=\rug{\overline{P}}.$$
\end{corollary}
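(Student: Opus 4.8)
The plan is to obtain both identities as the $k=0$ and $k=1$ specializations of the master formula \eqref{cleaneq} in Theorem~\ref{cleanest}, reading off the coefficient $\kroncoeff_{(d,d)(d+k,d-k)\lambda}=\langle s_{(d,d)}\ast s_{(d+k,d-k)},s_\lambda\rangle$ for each partition $\lambda$ of $2d$ and checking that the resulting sum of indicator functions equals $\charf{\lambda\in P}$ when $k=0$ and $\charf{\lambda\in\overline{P}}$ when $k=1$. Throughout I pad $\lambda$ with zero parts to length $4$; by the first clause of Lemma~\ref{lem:recurrence} the coefficient vanishes once $\ell(\lambda)>4$, which is consistent with $P$ and $\overline{P}$ consisting only of partitions of length at most $4$. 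Since $2d$ is even the number of odd parts of $\lambda$ is even, so every four-part $\lambda$ lies in exactly one of $P$ (zero or four odd parts) and $\overline{P}$ (exactly two odd parts); these two sets therefore partition the admissible index set.

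For $k=0$ the right-hand side of \eqref{cleaneq} collapses: the first sum contributes only the $i=0$ term $\charf{\lambda\in(0,0,0)P}$ and the second sum is empty. Since subtracting the zero vector does nothing, $\charf{\lambda\in(0,0,0)P}=\charf{\lambda\in P}$, and summing against $s_\lambda$ gives \eqref{eq:ke0}, namely $s_{(d,d)}\ast s_{(d,d)}=\rug{P}$.

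For $k=1$, formula \eqref{cleaneq} yields $\kroncoeff_{(d,d)(d+1,d-1)\lambda}=\charf{\lambda\in(1,1)P}+\charf{\lambda\in(2,1,1)P}+\charf{\lambda\in(3,2,1)P}$. The key step is a parity translation of these three indicators: writing $\lambda-\gamma\in P$ as the requirement that all four entries of $\lambda-\gamma$ share a common parity, I find that $(1,1)P$, $(2,1,1)P$ and $(3,2,1)P$ correspond exactly to the three perfect matchings of $\{1,2,3,4\}$ into two equal-parity pairs of mutually opposite parity, namely $\lambda_1\equiv\lambda_2,\ \lambda_3\equiv\lambda_4$; then $\lambda_1\equiv\lambda_4,\ \lambda_2\equiv\lambda_3$; then $\lambda_1\equiv\lambda_3,\ \lambda_2\equiv\lambda_4$ (all congruences mod $2$), respectively. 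These three conditions are mutually exclusive, hold for no $\lambda\in P$, and cover every $\lambda\in\overline{P}$ exactly once, so the sum equals $\charf{\lambda\in\overline{P}}$ and \eqref{eq:ke1} follows; this is precisely the case analysis already recorded in the proof of Corollary~\ref{cor:easycases}.

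I expect the one point needing care — the main obstacle — to be that each indicator $\charf{\lambda\in\gamma P}$ genuinely requires $\lambda-\gamma$ to be a \emph{partition}, not merely a vector of the correct parities, so a priori the clean three-way parity count could be spoiled by a failure of weak decrease near small parts. The plan is to observe that this never happens: in the selected matching the two index pairs have \emph{opposite} parity, which forces the relevant consecutive differences $\lambda_i-\lambda_{i+1}$ appearing in $\lambda-\gamma$ to be nonzero, hence at least the amount subtracted, so weak decrease and nonnegativity of the last part are automatic. Thus the partition constraint comes for free and the parity bookkeeping is the entire content, establishing the two displayed identities.
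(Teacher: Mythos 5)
Your proposal is correct and follows essentially the same route as the paper's own proof (Corollary~\ref{cor:easycases}): specialize Theorem~\ref{cleanest} to $k=0$ and $k=1$, note that for $k=1$ the coefficient is $\charf{\lambda\in(1,1)P}+\charf{\lambda\in(2,1,1)P}+\charf{\lambda\in(3,2,1)P}$, and check that these three indicators vanish on $P$ and select exactly the three equal-parity pairings of the parts of $\lambda\in\overline{P}$, one pairing per $\lambda$. Your additional verification that the weak-decrease condition on $\lambda-\gamma$ comes for free (the paired parts having opposite parity forces the relevant consecutive differences to be strict) is a point the paper leaves implicit, and you have checked it correctly.
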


\begin{proof}From Corollary~\ref{max coeff}, we see that if $k=0$ or $k=1$, the maximum coefficient of a Schur function in $s_{(d,d)}\ast s_{(d+k,d-k)}$ is one. Also, $G_{-1}=G_0$ is all four part partitions, as the gaps in any partition are all at least zero, so $G_{-1} \cap P=P$ and $G_0\cap \overline{P}=\overline{P}$, which completes the proof. \end{proof}

\begin{corollary}\label{cor:support}
The support of $s_{(d,d)}\ast s_{(d+k+2,d-k-2)}$ is contained in the support of $s_{(d,d)}\ast s_{(d+k,d-k)}$ for $k >1$.
\end{corollary}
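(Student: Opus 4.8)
The plan is to read both supports off Theorem~\ref{cleanest} and then match the generating sets. Since Kronecker coefficients are nonnegative and the right-hand side of \eqref{cleaneq} is a sum of indicator functions with no cancellation, the support of $s_{(d,d)}\ast s_{(d+k,d-k)}$ is exactly the union of the first-family sets $(k{+}i,k,i)P$ for $0\le i\le k$ together with the second-family sets $(k{+}i{+}1,k{+}1,i)P$ for $1\le i\le k$. Applying Theorem~\ref{cleanest} with $k$ replaced by $k{+}2$ describes the support of $s_{(d,d)}\ast s_{(d+k+2,d-k-2)}$ as the analogous union, now with $0\le i\le k{+}2$ and $1\le i\le k{+}2$. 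Thus it suffices to show that every generating set occurring for $k{+}2$ is contained in one of the generating sets occurring for $k$.

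The engine I would isolate first is a monotonicity property of the operation $\gamma\mapsto\gamma P$: \emph{if $\delta$ is a weakly decreasing vector of nonnegative even integers, then $(\gamma+\delta)P\subseteq\gamma P$.} Indeed, $\lambda\in(\gamma+\delta)P$ means $\lambda-\gamma-\delta\in P$, and adding $\delta$ back to a partition in $P$ keeps all parts of one parity (as $\delta$ is even), keeps the sequence weakly decreasing (as $\delta$ is weakly decreasing) and keeps it nonnegative; hence $\lambda-\gamma\in P$. This argument is valid for $\lambda$ of any length $\le 4$, which is exactly what a statement about supports requires.

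With this tool the bulk of the matching is immediate, using the shift $\delta=(2,2,0)$: for $0\le i\le k$ one has $(k{+}2{+}i,k{+}2,i)=(k{+}i,k,i)+(2,2,0)$, and for $1\le i\le k$ one has $(k{+}i{+}3,k{+}3,i)=(k{+}i{+}1,k{+}1,i)+(2,2,0)$. The genuinely new generators for $k{+}2$ are the four boundary cases $i=k{+}1$ and $i=k{+}2$ in each family, which carry no same-index counterpart; these are the crux. For them I would use the larger shift $\delta=(4,2,2)$, again weakly decreasing, nonnegative and even, via
\[
(2k{+}3,k{+}2,k{+}1)=(2k{-}1,k,k{-}1)+(4,2,2),\quad (2k{+}4,k{+}2,k{+}2)=(2k,k,k)+(4,2,2),
\]
\[
(2k{+}4,k{+}3,k{+}1)=(2k,k{+}1,k{-}1)+(4,2,2),\quad (2k{+}5,k{+}3,k{+}2)=(2k{+}1,k{+}1,k)+(4,2,2).
\]
The four base vectors on the right are precisely the first-family term at $i=k{-}1$, the first-family term at $i=k$, the second-family term at $i=k{-}1$, and the second-family term at $i=k$ of the $k$-expansion. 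This is where the hypothesis $k>1$ enters: the base vector $(2k,k{+}1,k{-}1)$ is the second-family generator at index $i=k{-}1$, which is part of \eqref{cleaneq} only when $k{-}1\ge 1$.

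Assembling these containments, each generating set for $k{+}2$ lies inside a generating set for $k$, so the union for $k{+}2$ is contained in the union for $k$, giving the stated inclusion of supports. The only real obstacle is the boundary indices $i=k{+}1,k{+}2$: the naive $(2,2,0)$ shift sends them to indices outside the admissible range for $k$, and one must instead find the $(4,2,2)$ shift that lands them back at the top indices ($i=k{-}1$ or $i=k$) of the $k$-expansion, then check that the resulting base vectors really are $k$-generators (which is what forces $k\ge 2$). Everything else is the routine verification that $(2,2,0)$ and $(4,2,2)$ are weakly decreasing, nonnegative and even, so that the monotonicity lemma applies.
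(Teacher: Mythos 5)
Your proof is correct, and it takes a genuinely different route from the paper's. The paper derives this containment from its alternative decomposition of $s_{(d,d)}\ast s_{(d+k,d-k)}$ into rugs indexed by gap and double-gap sets: writing $G_i$ (resp.\ $D_i$) for the four-part partitions all of whose gaps $\lambda_j-\lambda_{j+1}$ (resp.\ double gaps $\lambda_j-\lambda_{j+2}$) are at least $i$, the support is a union of sets $G_a\cap D_{a+k}$ ($a$ odd) together with parity-restricted sets built from $P$, $\overline{P}$ and auxiliary sets $X_k$, $Y_k$, $Z_k$; the containment of supports then follows from the monotonicity $G_c\subseteq G_b$ and $D_c\subseteq D_b$ for $b\le c$, plus a case analysis showing the $k{+}2$ versions of $X$, $Y$, $Z$ land inside $G_1\cap D_{k+1}$ or their $k$ counterparts. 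You instead work directly from Theorem~\ref{cleanest}: your key lemma, that $(\gamma+\delta)P\subseteq\gamma P$ whenever $\delta$ is weakly decreasing, nonnegative and even, is valid (the sum of two weakly decreasing nonnegative sequences is weakly decreasing and nonnegative, and an even shift preserves the all-even/all-odd condition), and your index matching checks out: the $(2,2,0)$ shift handles $0\le i\le k$ in the first family and $1\le i\le k$ in the second, while your four $(4,2,2)$ identities send the boundary generators $i=k{+}1,k{+}2$ to the $k$-generators at $i=k{-}1$ and $i=k$. You also isolate correctly where $k>1$ enters: it is needed exactly so that $(2k,k{+}1,k{-}1)$ is a legitimate second-family generator, i.e.\ $i=k-1\ge 1$. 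Your route buys a shorter, self-contained argument from the published main theorem and makes the failure of the statement for $k\le 1$ transparent; the paper's route leverages the gap-set description of the support, which it reuses for other consequences (such as locating the partitions achieving the maximal coefficient $\lfloor k/2\rfloor+1$), at the cost of a longer case analysis.
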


\begin{proof}The result follows if we show that all the index sets for rugs in $s_{(d,d)}\ast s_{(d+k+2,d-k-2)}$ are contained in index sets for rugs in $s_{(d,d)}\ast s_{(d+k,d-k)}$.

By definition we have that if $b \leq c$, then $G_{c} \subseteq G_b$, and  $D_{c} \subseteq D_b$, so for
$1 \leq a \leq k-1$,
$$\begin{array}{l}
    \Big. G_a \cap D_{a+k+2} \subseteq G_1 \cap D_{k+1}, \Big. \\
    \Big. D_{k+2} \cap P \subseteq D_{k} \cap P,  \Big.\\
    \Big. G_{k+2} \cap \overline{P} \subseteq G_{k} \cap \overline{P}. \Big.
\end{array}$$
Thus, all the index sets for $s_{(d,d)}\ast s_{(d+k+2,d-k-2)}$ that do not involve $X_k, Y_k,$ or $Z_k$ are contained in index sets for $s_{(d,d)}\ast s_{(d+k,d-k)}$.

For the remaining index sets, consider the following cases.
\begin{enumerate}
\item If $\lambda \in D_{k+2}\! \cap \! X_{k+2}$ and
$\lambda \notin D_{k}\! \cap \! X_k$, then all of $\lambda$'s gaps are odd, and either $k \leq \lambda_2 -\lambda_3$ or both $\lambda_1 -\lambda_2$ and $\lambda_3 -\lambda_4$ are greater than or equal to $k$.  Thus, all of $\lambda$'s gaps are at least one, and at least one gap is greater than or equal to $k$.

\item If $\lambda \in D_{k+2}\! \cap \! Y_{k+2}$ and $\lambda \notin D_{k}\! \cap \! Y_k$, then $k \leq \lambda_2 -\lambda_3$, and $\lambda_1 -\lambda_2$ and $\lambda_3 -\lambda_4$ are odd, so are both at least one.

\item If $\lambda \in D_{k+2}\! \cap \! Z_{k+2}$ and $\lambda \notin D_{k}\! \cap \! Z_k$, then $\lambda_1 -\lambda_2$ and $\lambda_3 -\lambda_4$ are both greater than or equal to $k$, and $\lambda_2 -\lambda_3$  is odd, so it is at least one.
\end{enumerate}
In each case, all the gaps are a minimum of one, and the double gaps are at least size $k\! +\! 1$, so $\lambda \in G_1 \cap D_{k+1}$ in each of the three cases above.  Thus,
$$\begin{array}{rcl}
    D_{k+2}\! \cap \! X_{k+2} &\subseteq &(D_{k}\! \cap \! X_k)\cup (G_1 \cap D_{k+1}), \\
    \bigg. D_{k+2}\! \cap \! Y_{k+2}&\subseteq &(D_{k}\! \cap \! Y_k)\cup (G_1 \cap D_{k+1}),  \bigg. \\
    D_{k+2}\! \cap \! Z_{k+2} &\subseteq &(D_{k}\! \cap \! Z_k)\cup (G_1 \cap D_{k+1}),
\end{array} $$
which completes the proof.  \end{proof}
}\comment{

\section{Proof of Proposition \ref{form prop}}\label{sec:proof}

In this section, we use a second theorem of \cite{Rosas} to prove Proposition \ref{form prop}, and the following definitions are required for the statement of the theorem.  For convenience, we denote $\mathbb{N} \cup  \{0\}$ as $\mathbb{N}$ for this discussion.
    \begin{definition} \label{defreach}
        Let $i,j,k$ and $l$ be non-negative integers.  We say $(k,l)$ is \emph{reachable} from $(i,j)$, written $(i,j) \rightsquigarrow (k,l)$, if there exists a path in $ \mathbb{N}^2$ from $(i,j)$ to $(k,l)$ that only uses steps that go north-west (N-W) or south-west (S-W), with the cardinal directions defined on $ \mathbb{N}^2$ in the natural way. 
    \end{definition}

\begin{remark}\label{defreachnote1} if $i \leq j$, the points reachable from $(i,j)$ are inside the triangle defined by the lines
    \begin{eqnarray*}
        L_1: && y = x-i+j \\
        L_2: && y = i+j-x \\
        L_3: && x = 0,
    \end{eqnarray*}
so the points are symmetric about the line $y=j$, as the triangle's apex sits on this line.  Further, the points in $L_1$ and $L_2$, for $ 0 \leq x \leq i$, are all reachable from $(i,j)$, as a path using only N-W steps hits every point on $L_2$, and similarly for $L_1$ using S-W steps.\end{remark}

\begin{remark}\label{defreachnote2} One cannot make a path using N-W or S-W steps to get from a point reachable from $(i,j)$ to a point directly above, below, or on either side, but any given horizontal line, one can take a N-W and a S-W step to get to alternating points west of a reachable point.\end{remark}

    \begin{definition}\label{rectangle}
        Let $x$, and $y$ be non-negative integers, and $R$ be a rectangle in $\mathbb{Z}^2$.
        \begin{eqnarray*}
            \Gamma_R(x,y) &=& |\{(i,j) \in R \! \cap \! \mathbb{N}^2 : (x,y) \! \rightsquigarrow \! (i,j) \} | \\
            &=& \textrm{number of points reachable from $(x,y)$ inside $R$}.         \end{eqnarray*}
    \end{definition}

\begin{example}
    Let $R$ be the rectangle with height and width $3$ and S-W vertex $(1,4)$.  Then the following picture demonstrates both the above definitions for $\Gamma_R(3,3)$.
   
    $$
      \scalebox{.5}{\includegraphics{reachable.pdf}}
    $$
    The darkened points are the points reachable from $(3,3)$, so we can see that $\Gamma_R(3,3)=2$.
\end{example}
}\comment{

We are now able to give the statement of the second theorem of \cite{Rosas} that we use.   
    \begin{theorem} \label{Rosas2}\cite[Theorem 1]{Rosas}
        Let $\mu, \nu$,\ and $\lambda$ be partitions of n, with $\mu=(\mu_1,\mu_2)$, $\nu=(\nu_1,\nu_2)$ and $\nu_2 \leq \mu_2$, and let $\lambda=(\lambda_1,\lambda_2,\lambda_3,\lambda_4)$ be a four part partition.  Let $M=min\{ \lambda_1 \! - \! \lambda_2, \lambda_3 \! - \! \lambda_4 \}$, and $m=max\{ \lambda_1 \! - \! \lambda_2, \lambda_3 \! - \! \lambda_4 \}$.    Then, for a given $\lambda$        \begin{eqnarray}
            \kroncoeff_{\mu\nu\lambda} = \big( \Gamma_P  \! -  \! \Gamma_N \big)(\nu_2, \mu_2 \! + \! 1),
        \end{eqnarray}
        where the rectangles $P$ and $N$ have S-W vertices $v_P=(\lambda_3 \! + \! \lambda_4, \lambda_2 \! + \! \lambda_4 \! + \! 1)$ and $v_N=(\lambda_3 \! + \! \lambda_4, \lambda_2 \! + \! \lambda_4 \! + \! m \! + \! 2)$ respectively, width $\lambda_2 \! - \! \lambda_3$ and height $M$.     \end{theorem}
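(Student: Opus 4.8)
The plan is to reduce the Kronecker coefficient $\kroncoeff_{\mu\nu\lambda}=\langle s_\mu \ast s_\nu, s_\lambda\rangle$ of two two-row shapes to a signed count of Kostka numbers and then to recognize that count as the reachable-lattice-point quantity $\Gamma_P-\Gamma_N$. First I would expand both Schur functions by the two-row Jacobi--Trudi identity $s_{(p,q)} = h_p h_q - h_{p+1}h_{q-1}$, where $h_n = s_{(n)}$ is the complete homogeneous symmetric function (with the convention $h_{-1}=0$), so that
\[
s_\mu \ast s_\nu = \sum_{\epsilon,\delta\in\{0,1\}} (-1)^{\epsilon+\delta}\,(h_{\mu_1+\epsilon}h_{\mu_2-\epsilon}) \ast (h_{\nu_1+\delta}h_{\nu_2-\delta}).
\]
This turns the problem into computing Kronecker products of products of $h$'s, where a clean combinatorial rule is available.

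The key tool for the second step is the classical identity that for compositions $\alpha,\beta$ of $n$ one has $h_\alpha \ast h_\beta = \sum_M h_{\mathrm{vec}(M)}$, where $M$ runs over nonnegative integer matrices with row-sum vector $\alpha$ and column-sum vector $\beta$ and $\mathrm{vec}(M)$ is the composition formed from the entries of $M$; this is the Frobenius image of the decomposition of a tensor product of two induced trivial modules over double cosets. Pairing against $s_\lambda$ and using $\langle h_\eta, s_\lambda\rangle = K_{\lambda\eta}$ (the Kostka number) gives
\[
\langle h_\alpha \ast h_\beta, s_\lambda\rangle = \sum_M K_{\lambda,\mathrm{vec}(M)}.
\]
Since $\mu$ and $\nu$ have two rows, each $M$ is a $2\times 2$ matrix and is determined by a single free entry $t=m_{11}$ ranging over an explicit interval forced by the margins; the content $\mathrm{vec}(M)$ is a length-four composition depending linearly on $t$. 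I would then evaluate $K_{\lambda,\mathrm{vec}(M)}$ as the number of semistandard tableaux of the four-part shape $\lambda$ with this content, which for such two-layer contents is itself a count of integer points constrained by the betweenness inequalities coming from $\lambda_1\geq\lambda_2\geq\lambda_3\geq\lambda_4$. This is what introduces the second lattice parameter and is where the width $\lambda_2-\lambda_3$ and the height $M=\min\{\lambda_1-\lambda_2,\lambda_3-\lambda_4\}$ of the rectangles should appear.

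Finally I would assemble the four signed terms and identify the total with $(\Gamma_P-\Gamma_N)(\nu_2,\mu_2+1)$. The evaluation point $(\nu_2,\mu_2+1)$ and the south-west vertices $v_P=(\lambda_3+\lambda_4,\lambda_2+\lambda_4+1)$ and $v_N=(\lambda_3+\lambda_4,\lambda_2+\lambda_4+m+2)$ should emerge upon substituting the parametrization back in: the row-sum and column-sum constraints become the two rectangle coordinates, the reachability via north-west and south-west steps encodes exactly the parity and interval conditions under which a given Kostka number is nonzero (and then equal to $1$), and the subtracted rectangle $N$, offset by $m+2$, absorbs precisely the over-counting produced by the two Jacobi--Trudi correction terms $-h_{\mu_1+1}h_{\mu_2-1}$ and $-h_{\nu_1+1}h_{\nu_2-1}$. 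The main obstacle is this last identification: showing that the signed double sum of Kostka numbers telescopes into the geometric difference $\Gamma_P-\Gamma_N$ with exactly the stated rectangles requires careful bookkeeping of which values of $t$ survive for each of the four terms, and it is here that the delicate matching of parities, and of the triangle-and-rectangle region to the support of the Kostka numbers, must be checked. The reductions in the first two steps are routine once the matrix identity is in hand; the lattice-point geometry is the heart of the argument.
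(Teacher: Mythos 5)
You should first note a point of comparison: the paper itself offers no proof of this theorem. It is quoted verbatim from Rosas \cite{Rosas} (and in the source it even sits inside commented-out material), so the only argument to measure yours against is Rosas's original one --- and your outline is essentially a reconstruction of that route: the two-row Jacobi--Trudi expansion $s_{(p,q)}=h_ph_q-h_{p+1}h_{q-1}$, the double-coset identity $h_\alpha \ast h_\beta = \sum_M h_{\mathrm{vec}(M)}$ over matrices with margins $\alpha$ and $\beta$, extraction of Kostka numbers via $\langle h_\eta, s_\lambda\rangle = K_{\lambda\eta}$, and signed bookkeeping over the four $(\epsilon,\delta)$ terms. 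That skeleton is sound and is the standard path to results of this type.

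As a proof, however, the proposal has a genuine gap, and it sits exactly where you place ``the heart of the argument.'' Worse, the mechanism you sketch for that step is wrong in one concrete respect: you assert that reachability encodes ``the parity and interval conditions under which a given Kostka number is nonzero (and then equal to $1$),'' but the Kostka numbers arising here are not $0$ or $1$. For example, with $\mu=\nu=(2,2)$ the matrix with free entry $t=1$ produces the content $\mathrm{vec}(M)=(1,1,1,1)$, and $K_{(2,1,1),(1,1,1,1)}=3$. In general, for $\ell(\lambda)\leq 4$ the number $K_{\lambda\eta}$ is a count of Gelfand--Tsetlin patterns with \emph{three} free internal parameters, so each of your four signed terms is a sum over four parameters ($t$ plus three pattern coordinates), not a one-parameter count with $0/1$ weights. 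The passage to the two-dimensional quantity $\big(\Gamma_P-\Gamma_N\big)(\nu_2,\mu_2+1)$ therefore requires proving that this signed four-parameter sum collapses: two parameters must cancel among the Jacobi--Trudi correction terms, the surviving two must be identified with the coordinates of the rectangles --- so that the width $\lambda_2-\lambda_3$, the height $M$, the south-west vertices, and in particular the offset $m+2$ of $N$ are \emph{derived} rather than predicted --- and the parity invariance of $x+y$ along north-west and south-east/south-west steps must be shown to account for the alternation that survives the cancellation. (The offset of $N$ is tied to a reflection symmetry across the line $y=\mu_2+1$, which is an additional fact needing proof, not an artifact of the expansion.) None of this is routine in the sense of being automatic; it is the entire content of the theorem, and the proposal as written does not carry it out.
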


The following lemma shows that the rectangle $N$ in the above formula may be replaced by its reflection across the line $y=\mu_2\!+\!1$.

    \begin{lemma} \label{reflec lemma}
        Let $\mu, \nu, \lambda, M, m$, and the rectangle $P$ be as in Theorem \ref{Rosas2}, and let $L$ be the line $y=\mu_2\! +\! 1$.  Define $N'$ to be the rectangle with width $\lambda_2 \! - \! \lambda_3$, height $M$, and S-W vertex
$$ v_{N'} = (v_{N'_{1}},v_{N'_{2}}) \! = \! (\lambda_3 \! + \! \lambda_4, \lambda_2 \! + \! \lambda_4 \! -\! (\mu_1 \! - \! \mu_2)).$$
Then
        $$\kroncoeff_{\mu\nu\lambda} = \big( \Gamma_P  \! -  \! \Gamma_{N'} \big)(\nu_2, \mu_2 \! + \! 1).$$
    \end{lemma}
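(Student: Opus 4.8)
The plan is to reduce the statement to a single assertion about reachability counts, namely that $\Gamma_N(\nu_2,\mu_2+1) = \Gamma_{N'}(\nu_2,\mu_2+1)$. Granting this, the lemma follows at once: subtracting $\Gamma_{N'}(\nu_2,\mu_2+1)$ in place of $\Gamma_N(\nu_2,\mu_2+1)$ leaves the value $(\Gamma_P - \Gamma_N)(\nu_2,\mu_2+1)$ of Theorem \ref{Rosas2} unchanged, and that value is $\kroncoeff_{\mu\nu\lambda}$. So the whole content is the equality of these two counts, which I would prove by exhibiting an explicit bijection between the reachable lattice points lying in $N$ and those lying in $N'$, realized by reflection across the horizontal line $L$.

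The crucial geometric input is that the evaluation point $(\nu_2,\mu_2+1)$ lies on $L$. Since $\nu_2 \leq \mu_2 < \mu_2+1$ by the standing hypothesis $\nu_2 \leq \mu_2$ of Theorem \ref{Rosas2}, Remark \ref{defreachnote1} applies with $(i,j) = (\nu_2,\mu_2+1)$ and shows that the set of points reachable from $(\nu_2,\mu_2+1)$ is symmetric about the line $y = \mu_2+1 = L$ (its defining triangle has apex on this line). Consequently the reflection $\sigma\colon (x,y)\mapsto (x,\,2\mu_2+2-y)$ is a bijection of $\mathbb{Z}^2$ that preserves reachability from $(\nu_2,\mu_2+1)$: a lattice point $p$ is reachable if and only if $\sigma(p)$ is. In particular $\sigma$ carries reachable points to reachable points, which are by definition already in $\mathbb{N}^2$, so no point escapes the first quadrant under the correspondence.

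It then remains to verify that $\sigma$ carries the rectangle $N$ exactly onto $N'$. Reflection preserves width $\lambda_2-\lambda_3$ and height $M$ and interchanges the bottom and top edges, so the S-W vertex of $\sigma(N)$ is the image under $\sigma$ of the N-W vertex of $N$, namely $\sigma(\lambda_3+\lambda_4,\ \lambda_2+\lambda_4+m+2+M)$. To evaluate its second coordinate I would use the two identities $m+M = (\lambda_1-\lambda_2)+(\lambda_3-\lambda_4)$ and $\mu_1+\mu_2 = \lambda_1+\lambda_2+\lambda_3+\lambda_4$ (both sides of the latter equal $n$), which combine to give $2\lambda_2+2\lambda_4+m+M = \lambda_1+\lambda_2+\lambda_3+\lambda_4 = \mu_1+\mu_2$. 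Hence $2\mu_2+2 - (\lambda_2+\lambda_4+m+2+M) = \lambda_2+\lambda_4-(\mu_1-\mu_2)$, which is exactly the second coordinate of $v_{N'}$, while the first coordinate $\lambda_3+\lambda_4$ is fixed by $\sigma$. Thus $\sigma(N) = N'$, and restricting the reachability-preserving bijection $\sigma$ to reachable points yields a bijection between $\{\text{reachable points in } N\cap\mathbb{N}^2\}$ and $\{\text{reachable points in } N'\cap\mathbb{N}^2\}$, so $\Gamma_N(\nu_2,\mu_2+1) = \Gamma_{N'}(\nu_2,\mu_2+1)$.

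The step I expect to demand the most care is the last one: correctly matching the corner and height conventions of Definition \ref{rectangle} so that $\sigma(N)$ is identified with $N'$ on the nose rather than up to a shift, and confirming that the reflection interchanges the top and bottom edges as claimed. The symmetry of the reachable set itself (Remark \ref{defreachnote1}) does the conceptual work, but the algebraic bookkeeping tying $v_N$, the height $M$, and $v_{N'}$ together through the partition identity $\mu_1+\mu_2 = \lambda_1+\lambda_2+\lambda_3+\lambda_4$ is where an error would most easily creep in. I would also check that the argument survives the degenerate cases $\lambda_2-\lambda_3=0$ or $M=0$, in which $N$ and $N'$ collapse to a segment or a single point; there the reflection argument applies verbatim and both counts are trivially equal.
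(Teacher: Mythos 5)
Your proposal is correct and is essentially the paper's own argument: both proofs reflect the rectangle $N$ across the line $L: y=\mu_2+1$, invoke the symmetry of the set of reachable points about that line (valid since $\nu_2 \leq \mu_2+1$ puts the apex of the reachability triangle on $L$), and identify the reflection with $N'$ by computing the image of the N-W vertex of $N$ using exactly the identities $m+M=(\lambda_1-\lambda_2)+(\lambda_3-\lambda_4)$ and $\mu_1+\mu_2=\lambda_1+\lambda_2+\lambda_3+\lambda_4$. The only cosmetic difference is that the paper additionally checks that $N$ lies strictly above $L$ (via $v_{N_2}=\max\{\lambda_1+\lambda_4,\lambda_2+\lambda_3\}+2>\mu_2+1$), a fact your bijection argument does not actually require.
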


\begin{proof}Let $N$ be the rectangle given in Theorem~\ref{Rosas2} Since $\mu$ and $\lambda$ are partitions of $n$, we have that
$$ \mu_2 \leq \left\lfloor \frac{n}{2} \right\rfloor \leq max\{ \lambda_1 \! + \! \lambda_4, \lambda_2 \! + \! \lambda_3 \}. $$
We also have that
$$ v_{N_2}=\lambda_2 \! + \! \lambda_4 \! + \! m \! + \! 2 = max\{ \lambda_1 \! + \! \lambda_4, \lambda_2 \! + \! \lambda_3 \} + 2
    > \mu_2 \! +\! 1,$$
so the rectangle $N$ lies strictly above $L$.  The S-W vertex of $N$'s reflection is the reflection of $N$'s N-W vertex, $(v_{N_1}, v_{N_2}\!+\!M)$; the reflection of the N-W vertex is given by $(v_{N_1}, \mu_2\!+\!1 -(v_{N_2}\!+\!M-\mu_2\!-\!1))$, as the reflection is across the horizontal line L.  We also have that
$$\begin{array}{rcll}
    2\mu_2\!+\!2 \! - (\!v_{N_2} \! + \! M)& = & 2\mu_2 \! -(\lambda_2 \! + \! \lambda_4 \! + \! m \! + \! M) \\
        &=& 2\mu_2 \! -(\lambda_2 \! + \! \lambda_4 \! + \! \lambda_1 \! - \! \lambda_2 \! + \! \lambda_3 \! - \! \lambda_4) \\
        &=& n\! - (\mu_1 \! - \! \mu_2) -\! \lambda_1 \! - \! \lambda_3 & (\textrm{ as } \mu \vdash n)\\
        &=& \lambda_2\! +\! \lambda_4- (\mu_1 \! - \! \mu_2), & (\textrm{ as } \lambda \vdash n)
\end{array}$$
so the reflection of $N$ across $L$ is $N'$, as the reflection has the same dimensions and S-W vertex $v_{N'}$.  Since $\nu_2 \leq \mu_2\! +\!1$, the points reachable from $(\nu_2,\mu_2+1)$ are symmetric about L, so any reachable point in $N$ has a reachable reflection in $N'$, and so
$$ \Gamma_N(\nu_2, \mu_2 \! + \! 1)=\Gamma_{N'}(\nu_2, \mu_2 \! + \! 1). $$
The result follows. \end{proof}
}\comment{
One way to calculate $\Gamma_R(\nu_2,\mu_2 \! + \! 1)$ is to decompose the rectangle $R$ into a number of horizontal line segments, and then $$\Gamma_R(\nu_2,\mu_2 \! + \! 1)=\sum_i \Gamma_{R_i}(\nu_2,\mu_2 \! + \! 1),$$ where $R_i$ is the $i$th line segment of $R$.  The next lemma provides a piecewise formula for calculating $\Gamma_{R_i}(\nu_2,\mu_2 \! + \! 1)$ when $R_i$ is a line segment.

\begin{lemma} \label{line seg lemma}
    Let $a,b$, and $c$ be non-negative integers, and $R_i$ be a line segment in $\mathbb{N}^2$ with western vertex $(a,c)$ and width $b$.  Define the lines
    \begin{eqnarray*}
        L_1: && y = x-\nu_2+\mu_2+1 \\
        L_2: && y = \nu_2+\mu_2+1-x \\
        L_{R_i}: && y = c,
    \end{eqnarray*}
 and let $x_{R_i}$ to be the $x$-coordinate of the intersection of $L_{R_i}$ and $L_1$ if $c \leq \mu_2\! +\! 1$, and the intersection of $L_{R_i}$ and $L_2$ otherwise.  
    \begin{enumerate}
        \item If $x_{R_i} < a$, then $\Gamma_{R_i}(\nu_2, \mu_2 \! + \! 1)=0$.

        \item If $a  \leq  x_{R_i}  \leq  a\! + \! b$, then $ \Gamma_{R_i}(\nu_2, \mu_2 \! + \! 1)= \left\lceil \frac{x_{R_i}\! +\! 1\! -\! a}{2} \right\rceil.$

        \item If $a \! + \! b  <  x_{R_i}$, then
        $ \Gamma_{R_i}(\nu_2, \mu_2 \! + \! 1)= \left\lceil \frac{b}{2} \right\rceil +\charf{a \! + \! x_{R_i} \textrm{ even}}
            \charf{ b \textrm{ even}}.$
    \end{enumerate}
\end{lemma}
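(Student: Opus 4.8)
The plan is to reduce the count $\Gamma_{R_i}(\nu_2,\mu_2+1)$ to a one-dimensional counting problem along the horizontal line $y=c$, by first writing down explicitly which lattice points are reachable from $(\nu_2,\mu_2+1)$. First I would record the arithmetic of reachability: a single N-W or S-W step lowers the $x$-coordinate by $1$ and changes the $y$-coordinate by $\pm1$, so after $t$ steps from $(\nu_2,\mu_2+1)$ one sits at a point with $x=\nu_2-t$ and $y-(\mu_2+1)\equiv t\pmod2$. Hence $(\nu_2,\mu_2+1)\rightsquigarrow(x,y)$ holds exactly when $0\le x\le\nu_2$, $\,|y-(\mu_2+1)|\le \nu_2-x$, $\,y\ge0$, and $x+y\equiv \nu_2+\mu_2+1\pmod2$. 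The inequality $|y-(\mu_2+1)|\le\nu_2-x$ is precisely the statement that $(x,y)$ lies in the triangle bounded by $L_1$ and $L_2$ from the remarks following the definition of reachability, and the congruence is the parity obstruction noted there. The one subtlety, that the path must remain in $\N^2$, is harmless: performing the required N-W steps before the S-W steps keeps the $y$-coordinate between $\min(\mu_2+1,y)\ge0$ and its maximum, so every candidate point in the triangle with the correct parity is genuinely reachable.

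Next I would fix the height $y=c$ of $R_i$, whose lattice points are the $(x,c)$ with $a\le x\le a+b$, and read off the reachable columns. By the characterisation above, the reachable points on the line $y=c$ are exactly those $(x,c)$ with $0\le x\le \nu_2-|c-(\mu_2+1)|$ and $x\equiv \nu_2-|c-(\mu_2+1)|\pmod2$; these form an arithmetic progression of common difference $2$ whose largest element is $\nu_2-|c-(\mu_2+1)|$. A direct check identifies this largest element with $x_{R_i}$: when $c\le\mu_2+1$, solving $c=x-\nu_2+\mu_2+1$ on $L_1$ gives $x=\nu_2-(\mu_2+1-c)$, and when $c>\mu_2+1$, solving $c=\nu_2+\mu_2+1-x$ on $L_2$ gives $x=\nu_2-(c-\mu_2-1)$, which is exactly the two-case definition of $x_{R_i}$. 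Thus the reachable points of $R_i$ are precisely the $(x,c)$ with $a\le x\le a+b$, $\,x\le x_{R_i}$, and $x\equiv x_{R_i}\pmod2$, where I use $a\ge0$ so that the left wall $L_3:x=0$ imposes no extra restriction inside the segment.

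Finally I would count these points in the three regimes of the statement. If $x_{R_i}<a$, the constraints $a\le x$ and $x\le x_{R_i}$ are incompatible, so the count is $0$. If $a\le x_{R_i}\le a+b$, the admissible $x$ run over $a\le x\le x_{R_i}$ with $x\equiv x_{R_i}\pmod2$, and the number of such integers is $\lfloor(x_{R_i}-a)/2\rfloor+1=\lceil(x_{R_i}+1-a)/2\rceil$. If $a+b<x_{R_i}$, the bound $x\le x_{R_i}$ is vacuous on the segment, so I count the integers of a fixed parity among the $b+1$ consecutive values $a,a+1,\dots,a+b$: for $b$ odd there are $(b+1)/2=\lceil b/2\rceil$ of each parity, while for $b$ even there are $b/2+1$ of the parity of $a$ and $b/2$ of the other, recorded compactly as $\lceil b/2\rceil+\charf{a+x_{R_i}\textrm{ even}}\charf{b\textrm{ even}}$. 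These three values match the claimed formulae exactly, and the boundary case $x_{R_i}=a+b$ is consistent between the second and third regimes.

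The only real content, and the step I would be most careful about, is the reachability characterisation of the first paragraph: pinning down the parity condition $x\equiv x_{R_i}\pmod2$ and verifying that \emph{every} lattice point of the correct parity inside the triangle is actually reachable by an $\N^2$-path, rather than merely lying in the triangle. Once that description is established, the remainder of the lemma is a routine interval count, and the ceilings together with the indicator $\charf{a+x_{R_i}\textrm{ even}}\charf{b\textrm{ even}}$ are forced entirely by counting an arithmetic progression of step $2$ against the window $[a,a+b]$.
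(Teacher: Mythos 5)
Your proof is correct and follows essentially the same route as the paper's: both identify $(x_{R_i},c)$ as the easternmost reachable point on the line $y=c$, observe that the reachable points there alternate in parity going west from it, and then count the intersection of that step-two progression with the window $[a,a+b]$ in the same three regimes, arriving at the same ceilings and the same indicator $\charf{a+x_{R_i}\textrm{ even}}\charf{b\textrm{ even}}$. The only difference is one of self-containedness rather than of method: you derive the reachability characterization (triangle bounded by $L_1$, $L_2$ plus the parity congruence, together with the check that the path can be kept inside $\mathbb{N}^2$) directly from the step arithmetic, whereas the paper imports exactly these facts from its remarks on reachable points and then performs the identical case analysis.
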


\begin{proof}From Remark~\ref{defreachnote1}, we have that the region of points reachable from $(\nu_2,\mu_2+1)$ is a triangle with $L_1$ and $L_2$ forming the eastern boundary, so $(x_{R_i},c)$ is the point of intersection of $L_{R_i}$, and that boundary.  Again by Remark~\ref{defreachnote1}, $(x_{R_i},c)$ is reachable from $(\nu_2,\mu_2+1)$, and there no are points reachable from $(\nu_2,\mu_2+1)$ east of $(x_{R_i},c)$ on the line $L_{R_i}$.  The line segment ${R_i}$ and $(x_{R_i},c)$ both sit on the line $L_{R_i}$ by definition, so ${R_i}$ may be strictly east of $(x_{R_i},c)$, contain $(x_{R_i},c)$ or be strictly west of $(x_{R_i},c)$.  We determine $\Gamma_{R_i}(\nu_2,\mu_2+1)$ in each case.
\end{proof}}\comment{\begin{proof}(continued)

\begin{enumerate}
    \item If ${R_i}$ is strictly east of $(x_{R_i},c)$, then $x_{R_i} < a$, and
        $\Gamma_{R_i}(\nu_2, \mu_2 \! + \! 1)=0$
        as all the points reachable from $(\nu_2, \mu_2 \! + \! 1)$ are west of $(x_{R_i},c)$ on $L_{R_i}$.

    \item  If $(x_{R_i},c)$ is contained in ${R_i}$, then $a \leq x_{R_i} \leq a+b$, and there are $x_{R_i}\! -\! (a\! -\! 1)$ points of ${R_i}$ inside the region of points reachable from $(\nu_2,\mu_2\! +\! 1)$.  Since the reachable points alternate along $L_{R_i}$ starting at $(x_{R_i},c) \in {R_i}$, we have
        $$\Gamma_{R_i}(\nu_2, \mu_2 \! + \! 1)=\left\lceil \frac{x_{R_i}\! -\! (a\! -\! 1)}{2} \right\rceil.$$

    \item  If ${R_i}$ is strictly west of $(x_{R_i},c)$, then $a+b < x_{R_i}$, and ${R_i}$ is fully inside the region of points reachable from $(\nu_2,\mu_2\!+\!1)$.  From Remark~\ref{defreachnote2}, the points reachable from $(\nu_2,\mu_2\!+\!1)$ alternate along $L_{R_i}$ with $(x_{R_i},c)$ being the most easterly.

Now, either the western point of ${R_i}$, $(a, c)$, is reachable from $(\nu_2,\mu_2+1)$ or $(a+1, c) \in {R_i}$ is reachable.  If $(a, c)$ is reachable from $(\nu_2,\mu_2+1)$, then $x_{R_i}\equiv a\mod 2$, so by Lemma \ref{ceil lemma} part 2
    $$\Gamma_{R_i}(\nu_2,\mu_2+1)=\left\lceil \frac{b\! + \! 1}{2} \right\rceil=\left\lceil \frac{b}{2} \right\rceil +\charf{b \textrm{ even}},$$
 otherwise $(a\!+\!1, c)$ is reachable, and $x_{R_i}\not\equiv a\mod 2$, so $$\Gamma_{R_i}(\nu_2,\mu_2+1)=\left\lceil \frac{b}{2} \right\rceil.$$
Thus, by Lemma \ref{propfunc} part 2,
    \begin{eqnarray*}
        \Gamma_{R_i}(\nu_2,\mu_2+1) &=& \bigg. \charf{a \equiv x_{R_i} \mod 2}
            \left( \left\lceil \frac{b}{2} \right\rceil +\charf{b \textrm{ even}} \right) \bigg.\\
           && + \charf{a \not\equiv x_{R_i} \mod 2}\left\lceil \frac{b}{2} \right\rceil \\ \\
        &=& \left\lceil \frac{b}{2} \right\rceil + \charf{a \equiv x_{R_i} \mod 2}
            \charf{ b \textrm{ even}  }  \\
        &=& \left\lceil \frac{b}{2} \right\rceil + \charf{a \! + \! x_{R_i} \textrm{ even}}
            \charf{ b \textrm{ even}  }.  
    \end{eqnarray*}
\end{enumerate}\end{proof}

For the remainder of this section, we focus on Kronecker products of the form $s_{(d,d)}\ast s_{(d+\! k,d-\! k)}$, and since $d\!-\!k \leq d$, the formula from Lemma \ref{reflec lemma} becomes
$$\kroncoeff_{\mu\nu\lambda} = \big( \Gamma_P  \! -  \! \Gamma_{N'} \big)(d\! -\! k, d \! + \! 1).$$
}\comment{

The formula only requires us to consider points reachable from $(d \! - \! k,d \! + \! 1)$, so we will refer to such points simply as \emph{reachable points}, and write $\Gamma_R$ in place of $\Gamma_R(d \! - \! k,d \! + \! 1)$ for any rectangle $R$.

\begin{proposition} \label{coeff id prop}
     Let $\mu, \nu$, and $\lambda$ be as in Theorem \ref{Rosas2}, and $M=min\{ \lambda_1 \! - \! \lambda_2, \lambda_3 \! - \! \lambda_4 \}$.      \begin{enumerate}
        \item If $k \leq  \lambda_2 \! - \! \lambda_3 \! + \! M$, $M \leq k$, and $\lambda_2 \! - \! \lambda_3 \leq k$, then
        $\kroncoeff_{\mu\nu\lambda}= \left\lceil \frac{\lambda_2 \! - \! \lambda_3 \! + \! M \! - \! k \!}{2} \right\rceil
            + \charf{ \lambda_2 \! - \! \lambda_3 \! + \! M \! - \! k \textrm{ even}  }.$

        \item If $M \leq k$, and $k < \lambda_2 \! - \! \lambda_3$, then
            $\kroncoeff_{\mu\nu\lambda} = \left\lceil \frac{ M }{2} \right\rceil
            + \charf{ \lambda_2 \! - \! \lambda_3 \!  - \! k \textrm{ even}  }\charf{ M  \textrm{ even} }.$

        \item If $k < M$ and $\lambda_2 \! - \! \lambda_3 \leq k$, then
            $ \kroncoeff_{\mu\nu\lambda} = \left\lceil \frac{\lambda_2 \! - \! \lambda_3}{2} \right\rceil +
                        \charf{ M \! - \! k \textrm{ even}}
                        \charf{ \lambda_2 \! - \! \lambda_3 \textrm{ even}}.$

        \item If $k< M$ and $k< \lambda_2 \! - \! \lambda_3$, then
        $ \kroncoeff_{\mu\nu\lambda}=\left\lceil \frac{k}{2} \right\rceil
            \! + \! \charf{ \lambda_2 \! - \! \lambda_3  \textrm{ even}  } \charf{ M  \textrm{ even}}
            \Big[ \charf{ \! k \textrm{ even}  } - \charf{ \! k \textrm{ odd}  } \Big]. $

        \item If $ \lambda_2 \! - \! \lambda_3 \! + \! M < k $, then $\kroncoeff_{\mu\nu\lambda}=0$.
    \end{enumerate}
\end{proposition}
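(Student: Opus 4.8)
The plan is to start from the reflected form of Rosas's second formula, Lemma~\ref{reflec lemma}, specialized to $\mu=(d,d)$ and $\nu=(d+k,d-k)$. Since $\nu_2=d-k\leq d=\mu_2$ this applies, and it gives $\kroncoeff_{\mu\nu\lambda}=(\Gamma_P-\Gamma_{N'})(d-k,d+1)$, where $P$ and $N'$ are rectangles of common width $\lambda_2-\lambda_3$ and common height $M$. The key simplifying observation is that $\mu_1-\mu_2=0$, so the south-west vertices become $v_P=(\lambda_3+\lambda_4,\lambda_2+\lambda_4+1)$ and $v_{N'}=(\lambda_3+\lambda_4,\lambda_2+\lambda_4)=v_P-(0,1)$; that is, $N'$ is exactly the downward unit translate of $P$.

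First I would exploit this translation to telescope. Writing each of $\Gamma_P$ and $\Gamma_{N'}$ as a sum of $\Gamma$ over its horizontal rows, every row of $P$ except the topmost coincides with a row of $N'$, and every row of $N'$ except the bottommost coincides with a row of $P$. Hence in $\Gamma_P-\Gamma_{N'}$ all shared rows cancel and only two survive: the top row of $P$, at height $\lambda_2+\lambda_4+M+1$, entering with a $+$, and the bottom row of $N'$, at height $\lambda_2+\lambda_4$, entering with a $-$. Both segments have western endpoint $x=\lambda_3+\lambda_4$ and width $\lambda_2-\lambda_3$. Using $\lambda\vdash 2d$ together with the definition of $M$ (so that $\lambda_2+\lambda_4+M\in\{\lambda_2+\lambda_3,\lambda_1+\lambda_4\}$, each at most $d$), one checks that both heights are $\leq d+1=\mu_2+1$, so in both cases the relevant eastern boundary is the line $L_1\colon y=x+k+1$.

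Next I would apply the line-segment lemma, Lemma~\ref{line seg lemma}, to each surviving row. For the top row the boundary meets $L_1$ at $x_R=\lambda_2+\lambda_4+M-k$, whose position relative to the endpoints $\lambda_3+\lambda_4$ and $\lambda_2+\lambda_4$ is governed by the signs of $(\lambda_2-\lambda_3)+M-k$ and $M-k$. This produces three regimes: $k>(\lambda_2-\lambda_3)+M$ (value $0$), $M\leq k\leq(\lambda_2-\lambda_3)+M$ (value $\lceil((\lambda_2-\lambda_3)+M-k+1)/2\rceil$), and $k<M$ (value $\lceil(\lambda_2-\lambda_3)/2\rceil+\charf{x_R+\lambda_3+\lambda_4\text{ even}}\charf{\lambda_2-\lambda_3\text{ even}}$). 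For the bottom row the boundary meets $L_1$ at $x_R=\lambda_2+\lambda_4-k-1$, which lies weakly west of the eastern endpoint for all $k\geq 0$, so only two regimes occur: $\lambda_2-\lambda_3\leq k$ (value $0$) and $\lambda_2-\lambda_3>k$ (value $\lceil(\lambda_2-\lambda_3-k)/2\rceil$). Matching the five hypotheses of the proposition against these regimes then assembles the five stated formulas; Cases~1 and~5 fall out immediately, the former after a single application of Lemma~\ref{ceil lemma}.

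The remaining work, and the only real obstacle, is purely arithmetic bookkeeping, reconciling the ceiling-and-indicator expressions via Lemma~\ref{ceil lemma}. Three reductions must be made with care. First, the parity indicator $\charf{x_R+\lambda_3+\lambda_4\text{ even}}$ coming from Lemma~\ref{line seg lemma} must be rewritten as $\charf{M-k\text{ even}}$; this uses that $x_R+\lambda_3+\lambda_4\equiv\lambda_2+\lambda_3+M-k$ and that the accompanying factor $\charf{\lambda_2-\lambda_3\text{ even}}$ forces $\lambda_2+\lambda_3$ even. Second, in Case~2 one must verify $\lceil(w+M-k+1)/2\rceil-\lceil(w-k)/2\rceil=\lceil M/2\rceil+\charf{w-k\text{ even}}\charf{M\text{ even}}$ with $w=\lambda_2-\lambda_3$, a four-way parity check. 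Third, in Case~4, where the top row now sits east of its segment, the same subtraction yields the signed indicator, so one must check $\lceil w/2\rceil-\lceil(w-k)/2\rceil+\charf{M-k\text{ even}}\charf{w\text{ even}}=\lceil k/2\rceil+\charf{w\text{ even}}\charf{M\text{ even}}(\charf{k\text{ even}}-\charf{k\text{ odd}})$, again by parity case analysis. I expect no conceptual difficulty beyond keeping the case bookkeeping and these two-versus-one-row parity translations organized.
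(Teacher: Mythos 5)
Your proposal is correct and follows essentially the same route as the paper's own proof: you invoke the reflection lemma (Lemma \ref{reflec lemma}) to reduce to $\Gamma_P - \Gamma_{N'}$, observe that $N'$ is the unit downward translate of $P$ so that the difference telescopes to the top edge of $P$ minus the bottom edge of $N'$, and then apply the line-segment lemma (Lemma \ref{line seg lemma}) at the same intersection points $x_T = \lambda_2+\lambda_4+M-k$ and $x_B = \lambda_2+\lambda_4-k-1$, assembling the five cases with the same parity bookkeeping. The only quibble is your parenthetical claim that $\lambda_1+\lambda_4$ and $\lambda_2+\lambda_3$ are \emph{each} at most $d$; in fact only their minimum, which equals $\lambda_2+\lambda_4+M$, is bounded by $d$ (since the two quantities sum to $2d$), and that is all that is needed to conclude $c_T \leq d+1$.
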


\begin{proof}Let $P$ be the rectangle as in Theorem \ref{Rosas2}, and let $N'$ be the rectangle as in Lemma \ref{reflec lemma}.  Then $P$ and $N'$ are rectangles with width $\lambda_2 \! - \! \lambda_3$, height $M$ and S-W vertices
\begin{eqnarray*}
    v_P &=& (\lambda_3 \! + \! \lambda_4, \lambda_2 \! + \! \lambda_4 \! + \! 1) \\
    v_{N'} & = &(\lambda_3 \! + \! \lambda_4, \lambda_2 \! + \! \lambda_4 \! -\! (d \! - \! d))
            =(\lambda_3 \! + \! \lambda_4, \lambda_2 \! + \! \lambda_4 ),
\end{eqnarray*}
respectively.  Since $v_{P_2} = v_{N'_2}\! +\! 1$, and $P$ and $N'$ have the same dimensions, they overlap each other except for the top edge of $P$, and the bottom edge of $N'$.  Define the top edge of $P$ to be $T$, and the bottom edge of $N'$ to be $B$.  $T$ and $B$ are line segments with width $b=\lambda_2 \! - \! \lambda_3$, and S-W vertices
    \begin{eqnarray*}
            (a,c_T) &=& (\lambda_3 \! + \! \lambda_4, \lambda_2 \! + \! \lambda_4 \! + \! M \! + \! 1) \\
            (a,c_B) &=& (\lambda_3 \! + \! \lambda_4, \lambda_2 \! + \! \lambda_4 \! ),
    \end{eqnarray*}
respectively.  The area where $P$ and $N'$ overlap can be described by either of $P \setminus T$ or $N' \setminus B$, so $\Gamma_{P \setminus T} = \Gamma_{N' \setminus B}.$
Thus,
$$\begin{array}{rclr}
    \kroncoeff_{\mu\nu\lambda} & = & \Gamma_P  \! -  \! \Gamma_{N'}  \\
    & = & \Gamma_{P \setminus T}  \! +  \! \Gamma_T - \Gamma_{N' \setminus B} \! - \! \Gamma_B   \\
    & = & \Gamma_T \! - \! \Gamma_B.
\end{array}$$

We apply Lemma \ref{line seg lemma} to calculate $\Gamma_T$ and $\Gamma_B$ for the line segments $T$ and $B$. Define the points $(x_T, c_T)$ and $(x_B,c_B)$, and the  lines as in Lemma \ref{line seg lemma}, with $\nu_2=d-k$, and $\mu_2=d$:
\end{proof}}\comment{\begin{proof}(continued)
    \begin{eqnarray*}
        L_1: && y = x+k+1 \\
        L_T: && y = c_T = \lambda_2 \! + \! \lambda_4 \! + \! M \! + \! 1 \\
        L_B: && y = c_B = \lambda_2 \! + \! \lambda_4.
    \end{eqnarray*}
Since
$$c_T= \lambda_2 \! + \! \lambda_4 \! + \! M \! + \! 1 = min\{ \lambda_1 \! + \! \lambda_4, \lambda_2 \! + \! \lambda_3 \}  \! + 1 \leq d \! + \! 1, \quad (\textrm{as } \lambda\vdash 2d) $$

we take $x_T$ to be the $x$-coordinate of the intersection of $L_1$ and $L_T$, and similarly, $x_B$ is the $x$-coordinate of the intersection of $L_1$ and $L_B$, as $c_B < c_T \leq d+1$.  We then have that

    \begin{eqnarray*}
        x_T  \! + \! k \! + \! 1 = \lambda_2 \! + \! \lambda_4 \! + \! M \! +1 & \Leftrightarrow & x_T = \lambda_2 \!
            + \! \lambda_4 \! + \! M \! - \! k, \quad \\ \\
        x_B  \! + \! k \! + \! 1 = \lambda_2 \! + \! \lambda_4 & \Leftrightarrow & x_B = \lambda_2 \! + \! \lambda_4 \! - \! k \! -1.
    \end{eqnarray*}

From Lemma \ref{line seg lemma}, we note that the third case for $\Gamma_B$ requires $a+b < x_B$, but
    $$ \lambda_2 \! + \! \lambda_4 \! =\! a\! +\! b < x_B \! =\! \lambda_2 \! + \! \lambda_4 \! - \! k \! -1$$
is a contradiction, so this case cannot occur for $\Gamma_B$.  We combine the three cases for $\Gamma_T$ with the remaining two cases for $\Gamma_B$ to calculate $\kroncoeff_{\mu\nu\lambda}= \Gamma_T \! - \! \Gamma_B$.
\begin{enumerate}

\item Take $a \leq x_T \leq a+b$ and $x_B < a$.  We have the following equivalences for these conditions.
    \begin{eqnarray*}
        \lambda_3 \! + \! \lambda_4 = a \leq x_T = \lambda_2 \! + \! \lambda_4 \! + \! M \! - \! k & \Leftrightarrow &
        \Big. k \leq  \lambda_2 \! - \! \lambda_3 \! + \! M, \Big. \\
        \lambda_2 \! + \! \lambda_4 \! + \! M \! - \! k = x_T \leq a+b = \lambda_2 \! + \! \lambda_4 & \Leftrightarrow &
        \Big. M  \leq  k \Big. , \qquad \qquad\\
        \lambda_2 \! + \! \lambda_4 \! - \! k \! -1 = x_B < a = \lambda_3 \! + \! \lambda_4 & \Leftrightarrow &
        \Big. \lambda_2 \! - \! \lambda_3 \leq k, \Big.
    \end{eqnarray*}
and these are the required conditions for the first part.

From Lemma \ref{line seg lemma}, we have that
    $$\kroncoeff_{\mu\nu\lambda}=\left\lceil \frac{x_T\! +\! 1\! -\! a}{2} \right\rceil -0
        = \left\lceil \frac{\lambda_2 \! - \! \lambda_3 \! + \! M \! - \! k \! +\! 1\!}{2} \right\rceil,$$
from which the result follows using Lemma \ref{ceil lemma} part 2.

\item Take $a \leq x_T \leq a+b$ and $a \leq x_B $.  Since $x_B \leq x_T$, the condition $a \leq x_T$ is implicit in $a \leq x_B $.  We use the case above for the condition equivalent to $x_T \leq a+b$.  For $a \leq x_B $, we have $ k <  \lambda_2 \! - \! \lambda_3.$
These are the required conditions for the second part.  Using Lemma \ref{line seg lemma} and Lemma \ref{ceil lemma}, we have
\end{enumerate}\end{proof}}\comment{\begin{proof}\begin{enumerate}(continued)
\begin{eqnarray*}
    \kroncoeff_{\mu\nu\lambda} &=& \left\lceil \frac{x_T\! +\! 1\! -\! a}{2}\right\rceil - \left\lceil \frac{x_B\! +\! 1\! -\! a}{2} \right\rceil \\
        &=& \left\lceil \frac{\lambda_2 \! - \! \lambda_3 \! + \! M \! - \! k \! +\! 1\!}{2} \right\rceil -
            \left\lceil \frac{\lambda_2 \! - \! \lambda_3 \! - \! k \!}{2} \right\rceil \\
        &=& \frac{M + \charf{ \lambda_2 \! - \! \lambda_3 \! + \! M \! - \! k \! +\! 1 \textrm{ odd}  }
            + 1 - \charf{ \lambda_2 \! - \! \lambda_3 \! - \! k  \textrm{ odd}  }}{2}\\
        &=& \left\lceil \frac{ M }{2} \right\rceil
            + \charf{ \lambda_2 \! - \! \lambda_3 \!  - \! k \textrm{ even}  }\charf{ M  \textrm{ even} }.
\end{eqnarray*}

\item Take $a+b < x_T$ and $x_B < a$.  Since
    \begin{eqnarray*}
        \lambda_2 \! + \! \lambda_4 =a\! +\! b < x_T=\lambda_2 \! + \! \lambda_4\! + \! M \! - \! k
            & \Leftrightarrow & k< M,
    \end{eqnarray*}
the conditions can be written as $k< M$ and $\lambda_2 \! - \! \lambda_3 \leq k$, which are as required for the third part.  Lemma \ref{line seg lemma} gives
\begin{eqnarray*}
    \kroncoeff_{\mu\nu\lambda} &=& \left\lceil \frac{b}{2} \right\rceil +\charf{a \! + \! x_R \textrm{ even}}
            \charf{ b \textrm{ even}}\\
       &=& \left\lceil \frac{\lambda_2 \! - \! \lambda_3}{2} \right\rceil +
            \charf{\lambda_2 \! + \lambda_3 \! +\! 2\lambda_4\! + \! M \! - \! k \textrm{ even}}
            \charf{ \lambda_2 \! - \! \lambda_3 \textrm{ even}}\\
        &=&\left\lceil \frac{\lambda_2 \! - \! \lambda_3}{2} \right\rceil +
            \charf{ M \! - \! k \textrm{ even}}
            \charf{ \lambda_2 \! - \! \lambda_3 \textrm{ even}},
\end{eqnarray*}
as $\charf{ \lambda_2 \! - \! \lambda_3 \textrm{ even}} =\charf{ \lambda_2 \! + \! \lambda_3 \textrm{ even}}$.

\item Take $a+b < x_T$ and $a \leq x_B$.  These can be written as the conditions for the fourth part: $k< M$ and $k< \lambda_2 \! - \! \lambda_3$. By Lemma \ref{line seg lemma} and Lemma \ref{ceil lemma}, we have
\begin{eqnarray*}
    \kroncoeff_{\mu\nu\lambda} &=&  \left\lceil \frac{\lambda_2 \! - \! \lambda_3}{2} \right\rceil -
            \left\lceil \frac{\lambda_2 \! - \! \lambda_3 \! - \! k \!}{2} \right\rceil
            +\charf{ M \! - \! k \textrm{ even}}
            \charf{ \lambda_2 \! - \! \lambda_3 \textrm{ even}}\\
        &=& \frac{ k \! + \! \charf{ \lambda_2 \! - \! \lambda_3  \textrm{ odd}  }\! - \! \charf{ \lambda_2 \! - \! \lambda_3 \!  - \! k \textrm{ odd}  }}{2}
            +\charf{ M \! - \! k \textrm{ even}}
            \charf{ \lambda_2 \! - \! \lambda_3 \textrm{ even}}\\
        &=& \left\lceil \frac{k}{2} \right\rceil
            \! + \! \charf{ \lambda_2 \! - \! \lambda_3  \textrm{ even}  } \charf{ M  \textrm{ even}}
            \Big[ \charf{ \! k \textrm{ even}  } - \charf{ \! k \textrm{ odd}  } \Big].
\end{eqnarray*}

\item Take $x_T < a$.  This requirement can be rewritten as  $\lambda_2 \! - \! \lambda_3 + \! M< k.$
Lemma \ref{line seg lemma} gives that $\Gamma_T=0$.  We also have that $x_B < x_T<a$, so $\Gamma_B=0$, and so
$\kroncoeff_{\mu\nu\lambda}=0.$
\end{enumerate}\end{proof}

}\comment{

We are now is a position to prove Proposition \ref{form prop}, which we restate here for convenience.  Recall that for a four part  partition $\lambda$ we have  $g_\lambda  =min\{\lambda_1-\lambda_2, \lambda_2-\lambda_3, \lambda_3-\lambda_4 \}$,  $d_\lambda = min \{ \lambda_1-\lambda_3, \lambda_2-\lambda_4 \}$ and $M= min\{\lambda_1  \! -  \! \lambda_2, \lambda_3  \! -  \! \lambda_4\}$.\\

\begin{proposition*}
Let $ \mu = (d,d)$, $\nu = (d \! + \! k,d \! - \! k)$, and $\lambda = (\lambda_1, \lambda_2, \lambda_3, \lambda_4)$ all be partitions of $2d$.    The Kronecker product coefficient, $\kroncoeff_{\mu\nu\lambda}$, is given by 
        \begin{eqnarray*}
                \kroncoeff_{\mu\nu\lambda} & = &
                \sum_{ \stackrel{1 \leq \, a \, \leq k-1  }{ a \; odd}}
                \charf{ a  \! \leq g_\lambda  }
                \charf{ a  \! +  \! k \leq d_\lambda }
                \\  & & +  \charf{ k \textrm{ even}   }
                \charf{ k  \! \leq  \! d_\lambda }
                \left( \begin{array}{l}
                    \Big. \charf{ M  \! <  \! k }
                    \charf{ \lambda_2  \! -  \! \lambda_3  \! <  \! k }
                    \charf{ M \textrm{ odd}  }
                    \charf{ \lambda_2  \! -  \! \lambda_3 \textrm{ odd}   } \Big. \\
                  + \Big. \charf{ M \textrm{ even}   }
                    \charf{ \lambda_2  \! -  \! \lambda_3 \textrm{ even}   } \Big.
                \end {array} \right)
                \\
                \\ & & +  \charf{ k \textrm{ odd}   }
                \charf{ k  \! \leq  \! d_\lambda }
                \left( \begin{array}{l}
                    \Big. \charf{ \lambda_2  \! -  \! \lambda_3  \! <  \! k }
                    \charf{ M \textrm{ odd}  }
                    \charf{ \lambda_2  \! -  \! \lambda_3 \textrm{ even}   } \Big.
\\
                     + \Big. \charf{ M  \! <  \! k }
                    \charf{ M \textrm{ even}  }
                    \charf{ \lambda_2  \! -  \! \lambda_3 \textrm{ odd}  } \Big.
\\
                    + \Big. \charf{ k  \! \leq  \! g_\lambda }
                    \charf{ M \textrm{ odd or } \lambda_2  \! -  \! \lambda_3 \textrm{ odd}  } \Big.
                \end{array} \right).
        \end{eqnarray*}
        \end{proposition*}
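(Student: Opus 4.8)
The plan is to reduce the entire identity to the five–case evaluation of $\kroncoeff_{\mu\nu\lambda}$ recorded in Proposition \ref{coeff id prop} (obtained from Rosas's reachability formula, Theorem \ref{Rosas2}, via the reflection Lemma \ref{reflec lemma} and the line–segment count Lemma \ref{line seg lemma}), and then to check region by region that the compact right–hand side reproduces it. Write $w=\lambda_2-\lambda_3$ and recall $M=\min\{\lambda_1-\lambda_2,\lambda_3-\lambda_4\}$. The first step is purely arithmetic: since $\lambda\vdash 2d$ forces $\lambda_1-\lambda_2\equiv\lambda_3-\lambda_4\pmod 2$, the integer $M$ controls the common parity of both outer gaps, so every ``$M$ odd/even'' hypothesis is unambiguous, and one checks directly that $g_\lambda=\min\{M,w\}$ and $d_\lambda=M+w$. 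These two identities convert all of the set–membership indicators $\charf{a\le g_\lambda}$, $\charf{a+k\le d_\lambda}$, $\charf{k\le g_\lambda}$, $\charf{k\le d_\lambda}$ into statements about the three integers $k,M,w$ alone — exactly the parameters governing Proposition \ref{coeff id prop}.

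Next I would evaluate the leading odd-$a$ sum. Using $g_\lambda=\min\{M,w\}$ and $d_\lambda=M+w$, the summand $\charf{a\le g_\lambda}\charf{a+k\le d_\lambda}$ collapses to $\charf{a\le\min\{M,w,M+w-k\}}$, so the sum counts odd $a$ with $1\le a\le\min\{k-1,M,w,M+w-k\}$. By the third identity of Lemma \ref{ceil lemma} this equals $\lceil T/2\rceil$ (read as $0$ when $T<0$), where $T=\min\{k-1,M,w,M+w-k\}$. This produces precisely the ``bulk'' ceiling part of Proposition \ref{coeff id prop}, and the remaining parity indicators in the statement are the boundary corrections.

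The core of the argument is the case check against the five regions of Proposition \ref{coeff id prop}, splitting on the comparisons of $k$ with $M$ and with $w$. In the interior of each region $T$ simplifies: $T=M+w-k$ when $M,w\le k\le M+w$, $T=M$ when $M\le k<w$, $T=w$ when $w\le k<M$, and $T=k-1$ when $k<M$ and $k<w$; when $M+w<k$ one has $T<0$ and also $\charf{k\le d_\lambda}=0$, so every term vanishes and the coefficient is $0$, matching case five. In each of the other four regions one matches $\lceil T/2\rceil$ plus the active parity indicators against the closed form, separating the even-$k$ and odd-$k$ parities of $k$ so the ceiling shifts stay transparent; the elementary manipulations are handled by Lemma \ref{propfunc} and Lemma \ref{ceil lemma}. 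For example, when $M,w<k<M+w$ the surviving parity terms collapse to $\charf{M\equiv w\pmod 2}$ for even $k$ and to $\charf{M\not\equiv w\pmod 2}$ for odd $k$, which in both parities is exactly the required $\charf{M+w-k\text{ even}}$.

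The main obstacle is the boundary bookkeeping at the walls between regions, namely $w=k$, $M=k$, and $M+w=2k$. There the two natural ceiling expressions $\lceil(k-1)/2\rceil$ and $\lceil k/2\rceil$ differ by one precisely when $k$ is odd, and it is one single parity indicator — such as the $\charf{k\le g_\lambda}\charf{M\text{ odd or }\lambda_2-\lambda_3\text{ odd}}$ term in the odd-$k$ formula — that is switched on to supply the missing unit. Verifying that in every boundary sub-case exactly the correct indicator fires (and no spurious one contributes) is the delicate work; the asymmetric bracket $\charf{k\text{ even}}-\charf{k\text{ odd}}$ in case four of Proposition \ref{coeff id prop} is where this is sharpest, since there $T=k-1$ identically while the target is $\lceil k/2\rceil$, so the $+1$ for even $k$ and the $-1$ for odd $k$ must both be produced by the parity terms $\charf{M\text{ even}}\charf{w\text{ even}}$ and $\charf{M\text{ odd or }w\text{ odd}}$ respectively.
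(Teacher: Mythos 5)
Your proposal is correct and takes essentially the same route as the paper: both arguments pass through Rosas's reachability formula, the reflection lemma and the line-segment counting lemma to reach the same intermediate five-case evaluation of $\kroncoeff_{\mu\nu\lambda}$ in terms of $k$, $M$ and $\lambda_2-\lambda_3$, and both finish with elementary characteristic-function algebra resting on the identities $g_\lambda=\min\{M,\lambda_2-\lambda_3\}$ and $d_\lambda=M+(\lambda_2-\lambda_3)$. The only real difference is the direction of the final verification: the paper expands each region's ceiling as a sum over odd $a$ and merges the four regional sums into the single uniform sum $\sum_{a}\charf{a\le g_\lambda}\charf{a+k\le d_\lambda}$ plus the correction term $\charf{k\ \mathrm{odd}}\charf{k\le g_\lambda}$, which largely sidesteps the wall-by-wall boundary bookkeeping (at $M=k$, $\lambda_2-\lambda_3=k$, $M+\lambda_2-\lambda_3=2k$) that your region-by-region check must carry out explicitly.
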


\begin{proof}Using Proposition \ref{coeff id prop}, we write $\kroncoeff_{\mu\nu\lambda}$ using characteristic functions to track the cases:
    \begin{eqnarray*}
        \kroncoeff_{\mu\nu\lambda} &=&  \charf{ k \! \leq \! \lambda_2  \! -  \! \lambda_3  \! +  \! M }
            \charf{ M  \! \leq \! k } \charf{ \lambda_2 \! - \! \lambda_3 \! \leq \! k }
            \Bigg( \left\lceil \frac{\lambda_2 \! - \! \lambda_3 \! + \! M \! - \! k \!}{2} \right\rceil
            + \charf{ \lambda_2 \! - \! \lambda_3 \! + \! M \! - \! k \textrm{ even}  } \Bigg)\\
        && + \charf{ M \! \leq \! k } \charf{ k \! < \! \lambda_2 \! - \! \lambda_3 }
            \Bigg( \left\lceil \frac{ M }{2} \right\rceil
            + \charf{ \lambda_2 \! - \! \lambda_3 \!  - \! k \textrm{ even}  }\charf{ M  \textrm{ even} } \Bigg)\\
        && + \charf{ k \! < \! M } \charf{ \lambda_2 \! - \! \lambda_3 \! \leq \! k }
            \Bigg( \left\lceil \frac{\lambda_2 \! - \! \lambda_3}{2} \right\rceil +
            \charf{ M \! - \! k \textrm{ even}}\charf{ \lambda_2 \! - \! \lambda_3 \textrm{ even}}\Bigg)\\
        && + \charf{ k \! < \! M }\charf{ k \! < \! \lambda_2 \! - \! \lambda_3 }
            \Bigg( \left\lceil \frac{k}{2} \right\rceil
            \! + \! \charf{ \lambda_2 \! - \! \lambda_3  \textrm{ even}  } \charf{ M  \textrm{ even}}
            \Big[ \charf{ \! k \textrm{ even}  } - \charf{ \! k \textrm{ odd}  } \Big] \Bigg).
    \end{eqnarray*}
We simplify $\kroncoeff_{\mu\nu\lambda}$ in two parts: a sum of the ceiling functions and a sum of the parity characteristic functions.
\end{proof}}\comment{\begin{proof}(continued)

To simplify the sum of the ceiling functions, which we denote $\Sigma$, we use Lemma \ref{ceil lemma} part 3 to change the ceiling functions in to sums, and we note three facts.  First, the index $a$ in each sum cannot be greater than $k$, so there are at most $\left\lceil \frac{k}{2} \right\rceil$ non-zero summands in each sum; we can therefore bound the index of summation by $k$.  In the last sum, we have that all the terms are equal to $1$, as $\charf{ a \leq k }=1$ for $1\leq a\leq k$.  Second,
$$\begin{array}{l}
    \Big. \charf{ M  \! < \! a } \charf{ \lambda_2 \! - \! \lambda_3 \! \leq \! k }
            \charf{ a \leq \lambda_2 \! - \! \lambda_3 \! + \! M \! - \! k }=0,  \Big.\\
    \Big. \charf{ M  \! \leq \! k } \charf{ \lambda_2 \! - \! \lambda_3 \! < \! a }
            \charf{ a \leq \lambda_2 \! - \! \lambda_3 \! + \! M \! - \! k }=0 \Big. ,
\end{array}$$
so in the first sum, $\charf{ r \! \leq \! k } =\charf{ a \! \leq \! r \! \leq \! k },$
where $r=M$ or $r=\lambda_2 \! - \! \lambda_3$.  Third, for non-negative integers $t$ and $s$,
$ k < t \textrm{ or } k < s \Rightarrow a\! + \! k \leq t\! +\! s,$ as the index obeys $a\leq k$, so by Lemma \ref{propfunc} part 5, we may multiply all the summands of $\Sigma$ by $\charf{ a \! + \! k \!\leq \! \lambda_2 \! - \! \lambda_3 \! + \! M  }$.

Applying this all to $\Sigma$, we get
    \begin{eqnarray*}
        \Sigma &=&  \charf{ k \! \leq \! \lambda_2  \! -  \! \lambda_3  \! +  \! M }
            \charf{ M  \! \leq \! k } \charf{ \lambda_2 \! - \! \lambda_3 \! \leq \! k }
            \left\lceil \frac{\lambda_2 \! - \! \lambda_3 \! + \! M \! - \! k \!}{2} \right\rceil
\\
        && + \charf{ M \! \leq \! k } \charf{ k \! < \! \lambda_2 \! - \! \lambda_3 }
             \left\lceil \frac{ M }{2} \right\rceil
\\
        && + \charf{ k \! < \! M } \charf{ \lambda_2 \! - \! \lambda_3 \! \leq \! k }
            \left\lceil \frac{\lambda_2 \! - \! \lambda_3}{2} \right\rceil
\\
        && + \charf{ k \! < \! M }\charf{ k \! < \! \lambda_2 \! - \! \lambda_3 }
            \left\lceil \frac{k}{2} \right\rceil
\end{eqnarray*}
\begin{eqnarray*}
        &  = &  \sum_{ \stackrel{1 \leq \, a \, \leq k }{ a \; odd}}
            \charf{ a \! \leq \! M  \! \leq \! k } 
            \charf{ a \! \leq \! \lambda_2 \! - \! \lambda_3 \! \leq \! k }
            \charf{ a \! + \! k \!\leq \! \lambda_2 \! - \! \lambda_3 \! + \! M  }
\\
        && + \sum_{ \stackrel{1 \leq \, a \, \leq k }{ a \; odd}}\charf{ a \! \leq \! M \! \leq \! k }
            \charf{ k \! < \! \lambda_2 \! - \! \lambda_3 }
            \charf{ a \! + \! k \!\leq \! \lambda_2 \! - \! \lambda_3 \! + \! M  }
\\
        && + \sum_{ \stackrel{1 \leq \, a \, \leq k }{ a \; odd}}
             \charf{ k \! < \! M } \charf{ a \! \leq \! \lambda_2 \! - \! \lambda_3 \! \leq \! k }
            \charf{ a \! + \! k \!\leq \! \lambda_2 \! - \! \lambda_3 \! + \! M  }
\\
        && + \sum_{ \stackrel{1 \leq \, a \, \leq k }{ a \; odd}}
            \charf{ k \! < \! M }\charf{ k \! < \! \lambda_2 \! - \! \lambda_3 }
            \charf{ a \! + \! k \!\leq \! \lambda_2 \! - \! \lambda_3 \! + \! M  }
\\
    &=& \sum_{ \stackrel{1 \leq \, a \, \leq k }{ a \; odd}} \charf{ a \! \leq \! M  }
        \charf{ a \! \leq \! \lambda_2 \! - \! \lambda_3  }
        \charf{ a \! + \! k \!\leq \! \lambda_2 \! - \! \lambda_3 \! + \! M  }
\\
    &=& \sum_{ \stackrel{1 \leq \, a \, \leq k-1 }{ a \; odd}} \charf{ a \! \leq \! g_\lambda  }
        \charf{ a \! + \! k \!\leq \! d_\lambda  } + \charf{ k  \textrm{ odd}}\charf{ k \! \leq \! g_\lambda  },
    \end{eqnarray*}
as for $0 \leq b$, 
$\charf{ b \! \leq \! M  } \charf{ b \! \leq \! \lambda_2 \! - \! \lambda_3  }= \charf{ b \! \leq \! g_\lambda  } \; \textrm{ and } \; \charf{ b \! \leq \! \lambda_2  \! -  \! \lambda_3  \! +  \! M }=\charf{ b \! \leq \! d_\lambda }.$
The term produced in the last transform will be used in the simplification of the case when $k$ is odd  below, and we will refer to that term as $\alpha$.
\end{proof}}\comment{\begin{proof}(continued)

Meanwhile, the sum of characteristic functions can be dealt with in two cases: when $k$ is even, and when $k$ is odd, and we denote these sums as $K_{\textrm{even}}$ and $K_{\textrm{odd}}$ respectively.  We begin by taking $k$ even.  The calculations that follow are similar to the manipulations of characteristic function in Proposition \ref{coeff id prop}, and so the details are omitted.
    \begin{eqnarray*}
        K_{\textrm{even}} &=&  \Big.\charf{ k \! \leq \! \lambda_2  \! -  \! \lambda_3  \! +  \! M }
            \charf{ M  \! \leq \! k } \charf{ \lambda_2 \! - \! \lambda_3 \! \leq \! k }
            \charf{ \lambda_2 \! - \! \lambda_3 \! + \! M \! - \! k \textrm{ even}  }\Big.\\
        && + \Big.\charf{ M \! \leq \! k } \charf{ k \! < \! \lambda_2 \! - \! \lambda_3 }
            \charf{ \lambda_2 \! - \! \lambda_3 \!  - \! k \textrm{ even}  }\charf{ M  \textrm{ even} } \Big.\\
        && + \Big.\charf{ k \! < \! M } \charf{ \lambda_2 \! - \! \lambda_3 \! \leq \! k }
            \charf{ M \! - \! k \textrm{ even}}\charf{ \lambda_2 \! - \! \lambda_3 \textrm{ even}}\Big.\\
        && + \Big.\charf{ k \! < \! M }\charf{ k \! < \! \lambda_2 \! - \! \lambda_3 }
            \! \charf{ \lambda_2 \! - \! \lambda_3  \textrm{ even}  } \charf{ M  \textrm{ even}}\Big.
\\\\
        &=&\Big.\charf{ k \! \leq \! d_\lambda }
            \charf{ M  \! \leq \! k } \charf{ \lambda_2 \! - \! \lambda_3 \! \leq \! k }
            \charf{ M \textrm{ odd}  }\charf{ \lambda_2 \! - \! \lambda_3 \textrm{ odd}  }\Big.\\
      && + \Big.\charf{ k \! \leq \! d_\lambda }
            \charf{ M \textrm{ even}}\charf{ \lambda_2 \! - \! \lambda_3 \textrm{ even}}\Big. .
    \end{eqnarray*}
The inequalities $M  \! \leq \! k$ and $\lambda_2 \! - \! \lambda_3 \! \leq \! k$ above may be made strict since $k$ is even in this case, and $M$ and $\lambda_2  \! -  \! \lambda_3$ must both be odd for that term.

Take $k$ odd. These calculations are also similar to those in Proposition \ref{coeff id prop}, and so the details are omitted. 
    \begin{eqnarray*}
        K_{\textrm{odd}} + \alpha &=&  \charf{ k \! \leq \! \lambda_2  \! -  \! \lambda_3  \! +  \! M }
            \charf{ M  \! \leq \! k } \charf{ \lambda_2 \! - \! \lambda_3 \! \leq \! k }
            \Big. \charf{ \lambda_2 \! - \! \lambda_3 \! + \! M  \textrm{ odd}  } \Big.\\
        && + \charf{ M \! \leq \! k } \charf{ k \! < \! \lambda_2 \! - \! \lambda_3 }
            \Big. \charf{ \lambda_2 \! - \! \lambda_3  \textrm{ odd}  }\charf{ M  \textrm{ even} } \Big.\\
        && + \charf{ k \! < \! M } \charf{ \lambda_2 \! - \! \lambda_3 \! \leq \! k }
            \Big. \charf{ M  \textrm{ odd}}\charf{ \lambda_2 \! - \! \lambda_3 \textrm{ even}}\Big.\\
        && - \charf{ k \! < \! M }\charf{ k \! < \! \lambda_2 \! - \! \lambda_3 }
            \Big. \charf{ \lambda_2 \! - \! \lambda_3  \textrm{ even}  } \charf{ M  \textrm{ even}} \Big.\\
        && + \charf{ k \! \leq \! M  } \charf{ k \! \leq \! \lambda_2 \! - \! \lambda_3  }\\\\
        &=& \charf{ k \! \leq \! d_\lambda }
            \left(\begin{array}{l}
                \charf{ M  \! \leq \! k }
                \Big. \charf{ M  \textrm{ even}  }\charf{ \lambda_2 \! - \! \lambda_3 \textrm{ odd}  } \Big. \\
                + \charf{ \lambda_2 \! - \! \lambda_3 \! \leq \! k }
                \Big. \charf{ M  \textrm{ odd}  } \Big.\charf{ \lambda_2 \! - \! \lambda_3 \textrm{ even}  }
            \end{array}\right)\\
        && + \Big. \charf{ k \! \leq \! g_\lambda  }
            \charf{ M \textrm{ odd or } \lambda_2  \! -  \! \lambda_3 \textrm{ odd}  }\Big. .
    \end{eqnarray*}
The inequalities inside the large brackets above may be made strict as $k$ is odd in this case, and $M$ is required to be even in the first term, and $\lambda_2  \! -  \! \lambda_3$ is required to be even in the second term.

  The result then follows from $\kroncoeff_{\mu\nu\lambda}= (\Sigma - \alpha) + K_{\textrm{even}}+ (K_{\textrm{odd}}+\alpha)$.\end{proof}
}

\begin{section}{Combinatorial and symmetric function consequences}\label{sec:combinatorics}
\begin{subsection}{Tableaux of height less than or equal to $4$}
Since every partition of length less than or equal to $4$ lies in
either $P$ or $\overline{P}$, Corollary \ref{cor:easycases} has as
a consequence the following corollary.
\begin{corollary}\label{cor:boundedsum4} For $d$ a positive integer,
\begin{equation}
\sum_{\lambda \vdash 2d,\ell(\lambda) \leq 4} s_\lambda=
s_{(d,d)} \ast (s_{(d,d)} + s_{(d+1,d-1)})~.
\end{equation}
\begin{equation}
\sum_{\lambda\vdash 2d-1, \ell(\lambda)\leq 4} s_\lambda=
s_{(d,d-1)} \ast s_{(d,d-1)}~.
\end{equation}
\end{corollary}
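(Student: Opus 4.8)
The plan is to treat the two identities separately, deriving both from Corollary~\ref{cor:easycases} and the identity \eqref{multperpkron}.

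For the first identity, I would begin by observing that $P$ and $\overline{P}$ partition the set of all partitions of $2d$ of length at most $4$: writing such a $\lambda$ as $(\lambda_1,\lambda_2,\lambda_3,\lambda_4)$ with trailing zeros allowed, the number of odd parts is even because $|\lambda|=2d$ is even, so it is $0$, $2$, or $4$; the cases $0$ and $4$ place $\lambda$ in $P$ (all four parts of one parity), and the case $2$ places $\lambda$ in $\overline{P}$. Hence $\rug{P}+\rug{\overline{P}}=\sum_{\lambda\vdash 2d,\,\ell(\lambda)\leq 4}s_\lambda$. Using bilinearity of $\ast$ together with \eqref{eq:ke0} and \eqref{eq:ke1} then gives $s_{(d,d)}\ast(s_{(d,d)}+s_{(d+1,d-1)})=\rug{P}+\rug{\overline{P}}$, which is exactly the first claim.

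For the second identity, the key tool is the specialization of \eqref{multperpkron} to $\lambda=(1)$. Since the only $\mu,\nu\vdash 1$ are $\mu=\nu=(1)$ and $\kroncoeff_{(1)(1)(1)}=1$, that identity reads $\langle s_{(1)}f,\,g\ast h\rangle=\langle f,\,(s_{(1)}^\perp g)\ast(s_{(1)}^\perp h)\rangle$ for all $f$; comparing the left side with the definition of the perp operator and using that the $s_\mu$ form an orthonormal basis yields the coproduct-type rule $s_{(1)}^\perp(g\ast h)=(s_{(1)}^\perp g)\ast(s_{(1)}^\perp h)$. Taking $g=h=s_{(d,d)}$, and using $s_{(1)}^\perp s_{(d,d)}=s_{(d,d-1)}$ (only the bottom corner cell can be removed) together with \eqref{eq:ke0}, gives $s_{(d,d-1)}\ast s_{(d,d-1)}=s_{(1)}^\perp\rug{P}=\sum_{\lambda\in P}s_{(1)}^\perp s_\lambda$.

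It then remains to expand $s_{(1)}^\perp s_\lambda=\sum_{\mu}s_\mu$ over partitions $\mu$ obtained from $\lambda$ by deleting one corner cell, and to show that each $\mu\vdash 2d-1$ with $\ell(\mu)\leq 4$ occurs exactly once in total, i.e.\ that there is a unique $\lambda\in P$ from which $\mu$ arises by adding a single cell. This count is the combinatorial heart of the argument and is where I expect the only real work to lie, though it reduces to a short parity check. Since $|\mu|=2d-1$ is odd, $\mu$ padded to four parts has either one or three odd parts. If it has exactly one odd part, then adding a cell to that unique odd part is the only move producing four parts of equal parity (it makes all parts even), and this move is legal because the part immediately above, having opposite parity, is strictly larger; symmetrically, if $\mu$ has three odd parts, adding a cell to the unique even part is the only legal move landing in $P$. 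Any move onto a part of the majority parity instead produces a length-$\leq 4$ partition with two parts of each parity, hence an element of $\overline{P}$ rather than $P$, and is excluded. This pins the count at exactly one for every such $\mu$, and since deleting a corner never raises the length above $4$, no $\mu$ with $\ell(\mu)>4$ can appear; the second identity follows.
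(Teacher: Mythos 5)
Your proposal is correct and follows essentially the same route as the paper: the first identity via the $P$/$\overline{P}$ parity dichotomy combined with Corollary~\ref{cor:easycases}, and the second via \eqref{multperpkron} specialized at $(1)$ together with the same parity count (one or three odd parts forces a unique legal cell-addition landing in $P$). The only cosmetic difference is that you phrase the second step as computing $s_{(1)}^\perp \rug{P}$ while the paper pairs $\rug{P}$ against $s_{(1)} s_\lambda$; by adjointness of $s_{(1)}^\perp$ and multiplication by $s_{(1)}$ these are the identical count.
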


\begin{proof}
For the sum over partitions of $2d$,  \eqref{sddsdd} (or  \eqref{cleaneq}) 
says that $s_{(d,d)} \ast s_{(d,d)}$ is the
sum over all $s_\lambda$ with $\lambda \vdash 2d$ having four even parts or four odd
parts and Corollary \ref{cor:easycases} says that $s_{(d,d)} \ast s_{(d+1,d-1)}$ is the sum over
$s_{\lambda}$ with $\lambda \vdash 2d$ where $\lambda$ does not have 
four odd parts or four even parts.  Hence, $s_{(d,d)} \ast s_{(d,d)} + s_{(d,d)} \ast s_{(d+1,d-1)}$
is the sum over $s_\lambda$ where $\lambda$ runs over all partitions with less than or equal
to $4$ parts.

For the other identity, we use  \eqref{multperpkron} to derive
$$\left< s_{(d,d-1)} \ast s_{(d,d-1)}, s_\lambda \right>
= \left< s_{(d,d)} \ast s_{(d,d)}, s_{(1)} s_\lambda \right>~.$$
If $\lambda$ is a partition of $2d-1$, then the expression is $0$ if
$\ell(\lambda)>4$ and if $\ell(\lambda) \leq4$ then
$s_{(1)} s_\lambda$ is a sum of at most $5$ terms, $s_{(\lambda_1+1, \lambda_2, \lambda_3, \lambda_4)}, s_{(\lambda_1, \lambda_2+1, \lambda_3, \lambda_4)}, 
s_{(\lambda_1, \lambda_2, \lambda_3+1, \lambda_4)}, 
s_{(\lambda_1, \lambda_2, \lambda_3, \lambda_4+1)}, s_{(\lambda_1, \lambda_2, \lambda_3,
\lambda_4,1)}$.
Because $\lambda$ has exactly $3$ or $1$ terms that are odd,
exactly one of these will have 4 even parts or 4 odd parts.
\end{proof}

The expressions for sums of Schur functions indexed by partitions
of length less than or equal to
$2$ and less than or equal to $3$ can be expressed using the outer
product on Schur functions.  It follows from the Pieri rule that
\begin{equation} \label{eq:boundedsum2}
\sum_{\lambda \vdash n, \ell(\lambda) \leq 2} s_\lambda = 
s_{(\lfloor n/2 \rfloor)} s_{(\lfloor (n+1)/2 \rfloor)}
\end{equation}
and
\begin{equation}\label{eq:boundedsum3}
\sum_{\lambda \vdash n, \ell(\lambda) \leq 3} s_\lambda = 
\sum_{d =0}^{\lfloor n/2 \rfloor} s_{(d,d)} s_{(n-2d)}~.
\end{equation}
For sums of Schur functions with length less than or equal to $5$ this
can also be represented using the outer product, though less trivially, as
\begin{equation}\label{eq:boundedsum5}
\sum_{\lambda \vdash n, \ell(\lambda) \leq 5} s_\lambda = 
\sum_{d =0}^{\lfloor n/2 \rfloor} \sum_{r=0}^{(n-2d)/4} s_{(d+r,d+r,r,r)} s_{(n-2d-4r)}~.
\end{equation}
This formula will generalize to an $\ell$-fold sum for
the sum of partitions less than or equal to $2\ell+1$.

Regev \cite{Regev}, Gouyou-Beauchamps \cite{Gouyou-Beauchamps}, 
Gessel \cite{Gessel} and subsequently Bergeron, Krob, 
Favreau, and Gascon \cite{BFK, BG} studied tableaux of bounded height.
Gessel \cite{Gessel} remarks that all of the expressions for the number
of standard tableaux of height less than or equal to $k$ for $k= 2,3,4,5$ have
an expression simpler than the $k$-fold sum that one would expect to see.  The
first three of those follow immediately from the expressions above.  If we set
$y_k(n) = $ the number of standard tableaux of height less than or equal to $k$
then 
\begin{equation}\label{eq:two}y_2(n) = {{n} \choose {\lfloor n/2 \rfloor}}~,
\end{equation}
\begin{equation}\label{eq:three}
y_3(n) = \sum_{d=0}^{\lfloor n/2 \rfloor} {{n} \choose{2d} }C_d~.
\end{equation}
We can also derive from these results here the previously known
corollary.
\begin{corollary}
\begin{equation}\label{eq:four}
y_4(n) = C_{\lfloor (n+1)/2 \rfloor} C_{\lceil (n+1)/2 \rceil}~.
\end{equation}
\end{corollary}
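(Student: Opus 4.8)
The plan is to apply the dimension functional $\langle\,\cdot\,,p_1^n\rangle$ to the two identities of Corollary~\ref{cor:boundedsum4}. Writing $f^\lambda$ for the number of standard Young tableaux of shape $\lambda$, one has $p_1^n=\sum_{\lambda\vdash n}f^\lambda s_\lambda$, so $\langle s_\lambda,p_1^n\rangle=f^\lambda$ and hence $y_4(n)=\langle\sum_{\lambda\vdash n,\,\ell(\lambda)\le4}s_\lambda,\,p_1^n\rangle$. The one structural fact I need is that this functional is multiplicative for the Kronecker product: since $\langle s_\mu\ast s_\nu,p_1^n\rangle=\sum_\lambda\kroncoeff_{\mu\nu\lambda}f^\lambda=\dim(M^\mu\otimes M^\nu)=f^\mu f^\nu$, bilinearity gives $\langle f\ast g,p_1^n\rangle=\langle f,p_1^n\rangle\langle g,p_1^n\rangle$ for any $f,g$ of degree $n$.

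First I would treat $n=2d$. Applying the functional to the first identity of Corollary~\ref{cor:boundedsum4} and using multiplicativity yields $y_4(2d)=f^{(d,d)}\bigl(f^{(d,d)}+f^{(d+1,d-1)}\bigr)$. Here $f^{(d,d)}=C_d$ as noted in the introduction, so it remains to check the elementary identity $C_d+f^{(d+1,d-1)}=C_{d+1}$. The hook-length (ballot number) formula for two-row shapes gives $f^{(d+1,d-1)}=\frac{3}{d+2}\binom{2d}{d-1}$, and a direct manipulation of binomial coefficients shows that both $C_d+f^{(d+1,d-1)}$ and $C_{d+1}$ equal $\frac{2(2d+1)}{(d+1)(d+2)}\binom{2d}{d}$. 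Hence $y_4(2d)=C_dC_{d+1}$, which is $C_{\lfloor(n+1)/2\rfloor}C_{\lceil(n+1)/2\rceil}$ when $n=2d$.

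Next I would treat $n=2d-1$. The second identity of Corollary~\ref{cor:boundedsum4} gives $y_4(2d-1)=\bigl(f^{(d,d-1)}\bigr)^2$, and the ballot-number formula together with $\binom{2d-1}{d-1}=\tfrac12\binom{2d}{d}$ gives $f^{(d,d-1)}=\frac{2}{d+1}\binom{2d-1}{d-1}=C_d$. Thus $y_4(2d-1)=C_d^2=C_{\lfloor(n+1)/2\rfloor}C_{\lceil(n+1)/2\rceil}$ when $n=2d-1$, so both parities assemble into \eqref{eq:four}.

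The only real content is the multiplicativity of $\langle\,\cdot\,,p_1^n\rangle$ under $\ast$ — which is just the statement that dimension is multiplicative on tensor products — after which everything reduces to the two standard Catalan evaluations $f^{(d,d-1)}=C_d$ and $C_d+f^{(d+1,d-1)}=C_{d+1}$. I expect no genuine obstacle here; the main care is the bookkeeping of the floor and ceiling so that the even and odd cases combine into the single closed form.
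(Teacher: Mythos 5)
Your proof is correct and follows essentially the same route as the paper, which likewise obtains \eqref{eq:four} by applying the dimension functional (phrased there as the algebra homomorphism sending $s_\lambda$ to $f^\lambda/n!$) to both identities of Corollary~\ref{cor:boundedsum4}, using that dimension is multiplicative over the Kronecker product. The Catalan evaluations you check by binomial manipulation ($f^{(d,d)}=C_d$, $f^{(d,d-1)}=C_d$, and $f^{(d,d)}+f^{(d+1,d-1)}=C_{d+1}$) are exactly the three hook-length-formula facts the paper quotes.
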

It follows from the hook length formula that the
number of standard tableaux of shape $(d,d)$ is $C_d$, the number of
standard tableaux of $(d,d-1)$ is $C_d$, and the
number of standard tableaux of shape that are either of shape
$(d,d)$ or $(d+1,d-1)$ is $C_{d+1}$.
Note that \eqref{eq:two}, \eqref{eq:three} and \eqref{eq:four} follow respectively
from \eqref{eq:boundedsum2} and \eqref{eq:boundedsum3} and Corollary
\ref{cor:boundedsum4} and the algebra homomorphism that sends
the Schur function $s_\lambda$ to the number of standard tableaux of shape
$\lambda$ divided by $n!$.  

The formula for the tableaux of height less than or
equal to $5$ \cite{Gouyou-Beauchamps},
$$y_5(n) = \sum_{d=0}^{\lfloor n/2 \rfloor} {{n} \choose {2d}} 
\frac{6 (2d+2)! C_d}{(d+2)! (d+3)!}~,$$
can be derived from \eqref{eq:boundedsum5} although some
non-trivial manipulation is required to arrive at this expression.

\end{subsection}

\begin{subsection}{Generating functions for partitions with coefficient $r$ in $s_{(d,d)} \ast
s_{(d+k, d-k)}$}

An easy consequence of Theorem \ref{cleanest} is a generating function formula for
the the sum of the coefficients of the expressions $s_{(d,d)} \ast s_{(d+k,d-k)}$. \comment{ Using
Corollary \ref{cor:boundedsum4},}
\begin{corollary}  \label{sumofcoeffs} For a fixed $k \geq 1$,
\begin{align*}
G_k(q) &:= \sum_{d \geq k} \left( \sum_{\lambda \vdash 2d} \langle s_{(d,d)} \ast s_{(d+k,d-k)},
s_\lambda \rangle \right) q^d \\
&= \frac{ q^k + q^{k+1} + q^{2k+1} + \sum_{r=k+2}^{2k} 2 q^r}{(1-q)(1-q^2)^2(1-q^3)}~.
\end{align*}
\end{corollary}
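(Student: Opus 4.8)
The plan is to start from Theorem~\ref{cleanest}, sum its coefficient formula over all $\lambda \vdash 2d$, and then repackage the resulting counts as a generating function. Writing $N_\gamma(d) := \#\{\lambda \vdash 2d : \lambda \in \gamma P\}$ for the number of partitions of $2d$ lying in $\gamma P$, interchanging the two finite sums in \eqref{cleaneq} gives
\begin{equation*}
\sum_{\lambda \vdash 2d} \langle s_{(d,d)} \ast s_{(d+k,d-k)}, s_\lambda\rangle
= \sum_{i=0}^k N_{(k+i,k,i)}(d) + \sum_{i=1}^k N_{(k+i+1,k+1,i)}(d),
\end{equation*}
so the whole problem reduces to understanding the numbers $N_\gamma(d)$.

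The key observation I would establish is that $N_\gamma(d)$ depends on $\gamma$ only through $|\gamma|$. By definition $\lambda \in \gamma P$ means $\mu := \lambda - \gamma$ is a partition with four even or four odd parts. Both families of $\gamma$ occurring here, namely $(k+i,k,i)$ with $0 \leq i \leq k$ and $(k+i+1,k+1,i)$ with $1 \leq i \leq k$, are themselves partitions (their entries are weakly decreasing and nonnegative in the stated ranges). Hence $\mu + \gamma$, a sum of two weakly decreasing nonnegative sequences, is again weakly decreasing, so $\lambda = \mu + \gamma$ is automatically a partition of $2d$ of length at most $4$. This makes $\mu \mapsto \mu + \gamma$ a bijection between $\{\mu \in P : |\mu| = 2d - |\gamma|\}$ and $\gamma P$, with inverse $\lambda \mapsto \lambda - \gamma$. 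Consequently $N_\gamma(d) = p_P(2d - |\gamma|)$, where $p_P(m)$ counts the partitions in $P$ of size $m$.

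Next I would compute $\Phi(q) := \sum_{m \geq 0} p_P(2m)\, q^m$ (every $\mu \in P$ has even size, so this captures all of $P$). Splitting $P$ by parity, the all-even members $(2a_1,\dots,2a_4)$ with $a_1 \geq \cdots \geq a_4 \geq 0$ have half-size $a_1+\cdots+a_4$ and are counted by partitions into at most $4$ parts, while the all-odd members $(2b_1{+}1,\dots,2b_4{+}1)$ have half-size $(b_1+\cdots+b_4)+2$. Therefore
\begin{equation*}
\Phi(q) = \frac{1 + q^2}{(1-q)(1-q^2)(1-q^3)(1-q^4)} = \frac{1}{(1-q)(1-q^2)^2(1-q^3)},
\end{equation*}
where the simplification uses $1-q^4 = (1-q^2)(1+q^2)$.

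Finally I would assemble the answer. Since $|(k+i,k,i)| = 2(k+i)$ and $|(k+i+1,k+1,i)| = 2(k+i+1)$ are even, and $N_\gamma(d) = p_P(2d-|\gamma|)$ vanishes for $d < |\gamma|/2$, we get $\sum_{d} N_\gamma(d)\, q^d = q^{|\gamma|/2}\,\Phi(q)$; moreover $|\gamma|/2 \geq k$ in every term, so truncating at $d \geq k$ discards nothing. Hence $G_k(q) = \Phi(q)\big(\sum_{i=0}^k q^{k+i} + \sum_{i=1}^k q^{k+i+1}\big)$, and collecting exponents (coefficient $1$ at $q^k, q^{k+1}, q^{2k+1}$ and coefficient $2$ at each $q^r$ with $k+2 \leq r \leq 2k$) shows the bracketed factor equals $q^k + q^{k+1} + q^{2k+1} + \sum_{r=k+2}^{2k} 2q^r$, which yields the stated formula. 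The only genuinely substantive step is the bijection of the second paragraph, which hinges on $\gamma$ being a partition so that the vector sum $\mu + \gamma$ stays a partition; once $N_\gamma(d)$ is known to depend only on $|\gamma|$, the remaining generating-function algebra is routine.
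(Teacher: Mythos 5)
Your proposal is correct and follows essentially the same route as the paper: decompose the product into rugs $\rug{\gamma P}$ via Theorem~\ref{cleanest}, observe that the count of $\lambda \vdash 2d$ in $\gamma P$ equals the number of partitions in $P$ of size $2d-|\gamma|$ (the paper writes this as $\langle \rug{\gamma P}, s_\lambda\rangle = \langle \rug{P}, s_{\lambda-\gamma}\rangle$), multiply the generating function $\frac{1}{(1-q)(1-q^2)^2(1-q^3)}$ for $P$ by $q^{|\gamma|/2}$, and collect exponents. The only difference is one of exposition: you spell out the shift bijection (including why $\gamma$ being a partition matters) and derive the generating function for $P$ from scratch, whereas the paper states these steps tersely, justifying the latter in the remark preceding its proof.
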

\begin{remark}
Note that Corollary \ref{sumofcoeffs} only holds for $k>0$.
In the case that $k=0$ in the formula in the previous expression,
the numerator of the above expression is a bit different and we have from
Corollary \ref{cor:easycases},
\begin{align*}
G_0(q) = \sum_{d \geq 0} \left( \sum_{\lambda \vdash 2d} \langle s_{(d,d)} \ast s_{(d,d)},
s_\lambda \rangle \right) q^d 
&=
\sum_{d \geq 0} \langle s_{(d,d)} \ast s_{(d,d)},
s_{(d,d)} \ast s_{(d,d)} \rangle  q^d\\
&=
\sum_{d \geq 0} \langle \rug{P},
\rug{P} \rangle  q^d = \sum_{d\geq 0} |P| q^d\\
&= \frac{ 1}{(1-q)(1-q^2)^2(1-q^3)}~.
\end{align*}
This last equality is the formula given in \cite[Corollary 1.2]{GWXZ2} and it follows because
the generating function for partitions with even parts and length less than or equal to $4$
is $\frac{ 1}{(1-q)(1-q^2)(1-q^3)(1-q^4)}$, and the generating function for the partitions of $2d$
with odd parts of length less than or equal to $4$ is $\frac{q^2}{(1-q)(1-q^2)(1-q^3)(1-q^4)}$.
The sum of these two generating functions will be equal to a generating function
for the number of non-zero coefficients of $s_{(d,d)} \ast s_{(d,d)}$.
\end{remark}
\begin{proof}
Recall that Theorem \ref{cleanest} gives a formula for the expression of
$s_{(d,d)} \ast s_{(d+k,d-k)}$ in terms of rugs of the form $\rug{ \gamma P}$.  
We can calculate that for
each of the rugs that appears in this expression
\begin{align*}
\sum_{d \geq k} \left( \sum_{\lambda \vdash 2d} \langle \rug{  \gamma P},
s_\lambda \rangle \right) q^d &= 
\sum_{d \geq k} \left( \sum_{\lambda \vdash 2d} \langle \rug{ P },
s_{\lambda - \gamma} \rangle \right) q^d\\
&= \sum_{d \geq k} \left( \sum_{\mu \vdash (2d-|\gamma|)} \langle \rug{ P },
s_{\mu} \rangle \right) q^{d+|\gamma|/2}\\
&= \frac{q^{|\gamma|/2}}{(1-q)(1-q^2)^2(1-q^3)}~.
\end{align*}

Now since $s_{(d,d)} \ast s_{(d+k,d-k)}$ is the sum of rugs of the form
$\rug{ \gamma P}$ where each of the $\gamma = (k,k), (k+1,k,1)$
and $(2k+1,k+1,k)$ each contribute a term to the numerator of the form
$q^k, q^{k+1}$ and $q^{2k+1}$ respectively. 
The rugs with $\gamma$ equal to
$(k+i+1,k+1,i)$ and $(k+i+1,k,i+1)$ for $1 \leq i \leq k-1$
each contribute a term $2 q^{k+i+1}$ to the numerator.
\end{proof}

To allow us to compute other generating functions of
Kronecker products we require the following very surprising
theorem.  It says that the partitions such that $\kroncoeff_{(d,d)(d+k,d-k)\lambda}$
are of coefficient $r>1$ are exactly the partitions $\gamma+(6,4,2)$ where
$\kroncoeff_{(d-6,d-6)(d-6+(k-2),d-6-(k-2))\gamma}$ is equal to $r-1$.
\begin{theorem} \label{magicpartition}
For $k \geq 2$, assume that $\kroncoeff_{(d,d)(d+k,d-k)\lambda}>0$, then
$$\kroncoeff_{(d+6,d+6)(d+k+8,d-k+4)(\lambda + (6,4,2))} 
= \kroncoeff_{(d,d)(d+k,d-k)\lambda}+1~.$$
\end{theorem}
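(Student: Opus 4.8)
The plan is to prove the identity by first reducing the length of $\lambda$ and then computing directly from Theorem~\ref{cleanest}. Write
$$\Delta(d,k,\lambda) := \kroncoeff_{(d+6,d+6)(d+k+8,d-k+4)(\lambda+(6,4,2))} - \kroncoeff_{(d,d)(d+k,d-k)\lambda},$$
so the goal is to show $\Delta(d,k,\lambda)=1$ whenever the subtracted coefficient is positive. Since $\kroncoeff_{(d,d)(a,b)\lambda}=0$ for $\ell(\lambda)>4$ by Lemma~\ref{lem:recurrence}, and since for $k\geq 2$ the one-row case $\lambda=(2d)$ has coefficient $\charf{k=0}=0$, it suffices to treat $2\leq\ell(\lambda)\leq 4$. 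Note that $\lambda+(6,4,2)$ always has third part $\geq 2$, so the shifted partition has length $3$ or $4$.

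First I would dispose of $\ell(\lambda)=4$ by iterating the column-stripping recurrence \eqref{fourcol}. If $\lambda_4\geq 1$ then $\lambda+(6,4,2)$ also has four parts, and applying \eqref{fourcol} to each term of $\Delta$ gives $\kroncoeff_{(d,d)(d+k,d-k)\lambda}=\kroncoeff_{(d-2,d-2)(d+k-2,d-k-2)(\lambda-(1^4))}$ and, because $(\lambda+(6,4,2))-(1^4)=(\lambda-(1^4))+(6,4,2)$ while $d+4=(d-2)+6$, also $\kroncoeff_{(d+6,d+6)(d+k+8,d-k+4)(\lambda+(6,4,2))}=\kroncoeff_{(d+4,d+4)(d+k+6,d-k+2)((\lambda-(1^4))+(6,4,2))}$. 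Hence $\Delta(d,k,\lambda)=\Delta(d-2,k,\lambda-(1^4))$, and the positivity hypothesis is inherited because the two coefficients are each individually preserved. Stripping columns lowers $\lambda_4$ one at a time until it vanishes, so the whole $\ell(\lambda)=4$ case reduces to the cases $\ell(\lambda)\leq 3$, where the claimed value $\Delta=1$ then propagates back up.

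For $\ell(\lambda)\leq 3$ I would evaluate both coefficients explicitly from \eqref{cleaneq}, using that a length-three argument $(a,b,c)$ makes $\charf{\lambda\in (a,b,c)P}$ require $\lambda_1-a,\lambda_2-b,\lambda_3-c$ to be nonnegative, weakly decreasing, and \emph{all even} (the implicit fourth part forces evenness). Because $\lambda_1+\lambda_2+\lambda_3$ is even, the residue of $\lambda_2$ modulo $2$ selects exactly one family in \eqref{cleaneq}: if $\lambda_2\equiv k$ only the terms $(k{+}i,k,i)$ contribute, while if $\lambda_2\equiv k+1$ only the terms $(k{+}i{+}1,k{+}1,i)$ contribute, and the identical dichotomy governs the $(k{+}2)$-product at $\lambda+(6,4,2)$. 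In each case the coefficient equals the number of indices $i$ of a fixed parity lying in an interval cut out by the inequalities above; for the first family these are $i\geq k+\lambda_3-\lambda_2$ together with $i\leq\lambda_1-\lambda_2$, $i\leq\lambda_3$, $i\leq k$. The crux is that passing to the shifted data leaves the lower endpoint and the parity constraint literally unchanged, while each upper constraint is relaxed uniformly by $2$ (to $\lambda_1-\lambda_2+2$, $\lambda_3+2$, $k+2$), so the admissible interval gains exactly the two new positions at its top end; precisely one carries the prescribed parity, and the positivity hypothesis forces the original interval to be nonempty so that these positions genuinely extend it. Thus the count rises by exactly $1$.

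The step I expect to be the main obstacle is this length-$\leq 3$ endpoint bookkeeping: confirming that the three upper bounds all shift by $2$ while the lower bound is unchanged, checking that the spurious nonnegativity condition on the middle entry (here $\lambda_2-k+2\geq 0$) and the second family's start at $i=1$ never bind, and verifying that no term of the \emph{other} family ever intrudes after the shift. By contrast, the $\ell(\lambda)=4$ reduction is routine once the two recurrences are aligned, and the terminal small-$d$ instances are exactly those already settled in the proof of Theorem~\ref{cleanest}.
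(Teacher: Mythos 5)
Your proposal is correct, and at its core it runs on the same engine as the paper's argument---both sides are evaluated via Theorem~\ref{cleanest}, and the increment of $1$ comes from the admissible index set gaining exactly one element of the prescribed parity---but the mechanics are genuinely different. The paper never reduces the length of $\lambda$: it keeps all $\lambda$ with $\ell(\lambda)\leq 4$, uses the parity of $\lambda_2-k-\lambda_4$ to kill one of the two families in \eqref{cleaneq}, counts the surviving indicators as $a+1$ with $a=\lfloor\min(\lambda_3-\lambda_4,\lambda_1-\lambda_2-k,k)/2\rfloor$, and then transfers each surviving indicator to the shifted product through a dedicated structural lemma (Lemma~\ref{hitlemma}): $\lambda\in\gamma P$ if and only if $\lambda+(6,4,2)$ lies in both $(\gamma_1+2,\gamma_2+2,\gamma_3,\gamma_4)P$ and $(\gamma_1+4,\gamma_2+2,\gamma_3+2,\gamma_4)P$; that lemma is what produces exactly one extra nonzero term. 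You replace it by two devices: (i) the observation that the column-stripping recurrence \eqref{fourcol} commutes with the $(6,4,2)$-shift, since $(\lambda+(6,4,2))-(1^4)=(\lambda-(1^4))+(6,4,2)$ while both coefficients (hence the positivity hypothesis) are preserved exactly, which normalizes to $\lambda_4=0$; and (ii) for $\ell(\lambda)\leq 3$, the explicit description of $\lambda\in(a,b,c)P$ as ``differences nonnegative, weakly decreasing, all even,'' followed by endpoint bookkeeping. That bookkeeping checks out: the lower endpoint ($\max(0,k+\lambda_3-\lambda_2)$, resp.\ $\max(1,k+1+\lambda_3-\lambda_2)$ for the second family) and the parity class $i\equiv\lambda_3 \pmod 2$ are invariant under the shift, the three upper bounds each relax by exactly $2$, and positivity guarantees the original interval already contains an index of the right parity, so the count rises by exactly one. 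Your route buys a cleaner parity analysis (with $\lambda_4=0$ the dichotomy between the two families is immediate) and avoids proving Lemma~\ref{hitlemma}; the paper's route needs no reduction step and isolates the structural reason the vector $(6,4,2)$ works into a reusable statement. Two of your flagged worries are non-issues: the second family's start at $i=1$ does not need to ``never bind,'' only to be identical on both sides (it is); and no appeal to the base cases of Theorem~\ref{cleanest}'s induction is needed, since that theorem is already available for every $d\geq k$, making your length-$\leq 3$ computation self-contained.
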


We require the following lemma.
\begin{lemma} \label{hitlemma} For $\gamma$ a partition with $\ell(\gamma) \leq 4$,
$$\lambda \in \gamma P \Leftrightarrow
\lambda + (6,4,2) \hbox{ is in both } (\gamma_1 + 2, \gamma_2+2, \gamma_3, \gamma_4) P\hbox{ and }(\gamma_1 + 4, \gamma_2+2, \gamma_3+2, \gamma_4) P~.$$
\end{lemma}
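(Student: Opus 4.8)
The plan is to reduce the whole statement to an elementary comparison of four inequalities plus a single parity check, after translating both sides into conditions on the single vector difference $\mu := \lambda - \gamma$. First I would pad $\gamma$ with zeros to length $4$ and set $\mu = (\mu_1,\mu_2,\mu_3,\mu_4) = (\lambda_1-\gamma_1,\lambda_2-\gamma_2,\lambda_3-\gamma_3,\lambda_4-\gamma_4)$, so that by definition $\lambda \in \gamma P$ holds exactly when $\mu$ is a partition all of whose parts share one parity. Writing $\alpha = (\gamma_1+2,\gamma_2+2,\gamma_3,\gamma_4)$ and $\beta = (\gamma_1+4,\gamma_2+2,\gamma_3+2,\gamma_4)$ for the two target vectors, a direct subtraction gives
\[
(\lambda + (6,4,2)) - \alpha = \mu + (4,2,2,0), \qquad (\lambda + (6,4,2)) - \beta = \mu + (2,2,0,0),
\]
so the right-hand side of the lemma asserts precisely that both $\mu + (4,2,2,0)$ and $\mu + (2,2,0,0)$ lie in $P$. (Note $\lambda + (6,4,2)$ is automatically a partition whenever $\lambda$ is, so there is no well-definedness issue.)

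Next I would dispatch parity once and for all. Since each of $\mu$, $\mu + (4,2,2,0)$, and $\mu + (2,2,0,0)$ differs from $\mu$ only by a vector with all-even entries, the three ``same parity'' requirements are literally the one condition $\mu_1 \equiv \mu_2 \equiv \mu_3 \equiv \mu_4 \pmod 2$. Hence parity factors out of the equivalence entirely, and it remains only to match the weakly-decreasing-and-nonnegative conditions.

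For the forward implication, if $\mu_1 \ge \mu_2 \ge \mu_3 \ge \mu_4 \ge 0$ then, because both $(4,2,2,0)$ and $(2,2,0,0)$ are themselves weakly decreasing and nonnegative, each sum $\mu + (4,2,2,0)$ and $\mu + (2,2,0,0)$ is again weakly decreasing with nonnegative parts; thus both lie in $P$. For the converse I would read off the four tight inequalities one at a time from whichever shift keeps that particular gap tight: $\mu_1 \ge \mu_2$ comes from the first inequality of $\mu + (2,2,0,0)$, namely $\mu_1+2 \ge \mu_2+2$; $\mu_2 \ge \mu_3$ from the second inequality of $\mu + (4,2,2,0)$, namely $\mu_2+2 \ge \mu_3+2$; $\mu_3 \ge \mu_4$ from the third inequality of $\mu + (2,2,0,0)$; and $\mu_4 \ge 0$ from the last part of either. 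Together these give that $\mu$ is a partition, which with the already-established parity condition yields $\mu \in P$, i.e.\ $\lambda \in \gamma P$.

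There is no serious obstacle here; the one thing to get right is the bookkeeping that shows the two shifts $(4,2,2,0)$ and $(2,2,0,0)$ complement each other. Each single shift weakens some of the gaps $\mu_i - \mu_{i+1}$ below the threshold needed to recover $\mu_i \ge \mu_{i+1}$ (for instance $\mu + (4,2,2,0)$ only yields $\mu_1 \ge \mu_2 - 2$), but between the two shifts every gap and the nonnegativity of $\mu_4$ is witnessed in tight form. This is exactly why an intersection of two conditions is required on the right-hand side, and making that complementarity explicit is the heart of the argument.
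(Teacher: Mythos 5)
Your proof is correct and takes essentially the same route as the paper's: both reduce everything to $\mu = \lambda - \gamma$, recognize the two memberships as $\mu + (4,2,2,0) \in P$ and $\mu + (2,2,0,0) \in P$, extract the tight inequalities $\mu_1 \ge \mu_2$ and $\mu_3 \ge \mu_4 \ge 0$ from one shift and $\mu_2 \ge \mu_3$ from the other, and let parity pass through because the shift vectors are all even. Your write-up is somewhat more explicit than the paper's (which states the complementary inequalities in terms of $\lambda$ and $\gamma$ directly), but the argument is the same.
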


\begin{proof} $(\Rightarrow)$
If $\lambda \in \gamma P$, then $\lambda - \gamma$ is a partition with four even parts or $4$ odd parts.  Hence, both $\lambda - \gamma + (2,2) = (\lambda + (6,4,2)) - (\gamma + (4,2,2))$ and
$\lambda - \gamma + (4,2,2) = (\lambda + (6,4,2)) - (\gamma + (2,2))$ are elements of $P$.

$(\Leftarrow)$ Assume that $\lambda + (6,4,2)$ is an element of both $(\gamma_1 + 2, \gamma_2+2, \gamma_3, \gamma_4) P$ and $(\gamma_1 + 4, \gamma_2+2, \gamma_3+2, \gamma_4) P$, then
$\lambda_1+6 - (\gamma_1+4) \geq \lambda_2+4 - (\gamma_2+2)$, $\lambda_2+4-(\gamma_2+2)\geq
\lambda_3+2-\gamma_3$, and  $\lambda_3+2-(\gamma_3+2) \geq \lambda_4-\gamma_4 \geq 0$.  This implies that $\lambda - \gamma$ is a partition and since $\lambda-\gamma+(2,2)$ has four even or four odd parts, then so does $\lambda - \gamma$ and hence $\lambda  \in \gamma P$.
\end{proof}

\begin{proof} (of Theorem \ref{magicpartition})  Consider the case where $\lambda$ is a partition of
$2d$ with
$\lambda_2 - k \equiv \lambda_4~(mod~2)$ since the case where
$\lambda_2 - k \equiv\!\!\!\!\!/~~~\lambda_4~(mod~2)$ is analogous and just uses a different
non-zero terms in the sum below.
From Theorem \ref{cleanest}, we have 
\begin{equation}\label{redsum}
\kroncoeff_{(d,d)(d+k,d-k)\lambda} = \sum_{i=0}^k \charf{\lambda \in (k+i,k,i)P}~,
\end{equation}
since the other terms are clearly zero in this case.  If $\lambda_2 - k \geq \lambda_3$, then
the terms in this sum will be non-zero as long as
$0 \leq i \leq \lambda_3 - \lambda_4$ and $0 \leq k+i \leq \lambda_1-\lambda_2$ and $\lambda_3 - i \equiv \lambda_4~(mod~2)$.  Consider the case where
$\lambda_3 \equiv \lambda_4~(mod~2)$\comment{(the other case is similar)}, then
\eqref{redsum} is equal to 
$a+1$ where $a=\lfloor min(\lambda_3 - \lambda_4, \lambda_1 - \lambda_2 - k, k)/2 \rfloor$ since
the terms that are non-zero in this sum are $\charf{\lambda \in (k+2j, k, 2j)P}$ where $0 \leq j \leq a$.
By Lemma \ref{hitlemma}, these
terms are true if and only if $\charf{ \lambda+(6,4,2) \in (k+2+2j,k+2,2j) P}$ 
are true for all $0 \leq j \leq a+1$.  But again by Theorem \ref{cleanest}, in this case we also have
$$\kroncoeff_{(d+6,d+6)(d+k+8,d-k-4)(\lambda+(6,4,2))} = 
a+2 = \kroncoeff_{(d,d)(d+k,d-k)\lambda} + 1~.$$
The case where $\lambda_3 \equiv \lambda_4 + 1~(mod~2)$ is similar,
but the terms of the form $\charf{\lambda\in (k+2j+1,k,2j+1)P}$ in \eqref{redsum}
are non-zero if and only if the terms $\charf{\lambda+(6,4,2)\in (k+2+2j+1,k+2,2j+1)P}$
contribute to the expression for $\kroncoeff_{(d+6,d+6)(d+k+8,d-k-4)(\lambda+(6,4,2))}$
and there is exactly one more non-zero term hence $\kroncoeff_{(d+6,d+6)(d+k+8,d-k-4)(\lambda+(6,4,2))} = \kroncoeff_{(d,d)(d+k,d-k)\lambda} + 1$.
\comment{Now consider $\lambda_3 \equiv \lambda_4 + 1~(mod~2)$, then
\eqref{redsum} is equal to $a$ or $a+1$ depending if $min(\lambda_3 - \lambda_4, \lambda_1 - \lambda_2 - k, k) \equiv 0~(mod~2)$ because the
terms $\charf{ \lambda\in (k+2j+1,k,2j+1)P}$ for 
$0 \leq 2j+1 \leq min(\lambda_3 - \lambda_4, \lambda_1 - \lambda_2 - k, k)$.  Just as before,
Lemma \ref{hitlemma} implies that the terms $\charf{ \lambda+(6,4,2) \in (k+2+2j+1,k+2,2j+1)P}$
are non-zero for $0 \leq 2j+1 \leq min(\lambda_3 - \lambda_4, \lambda_1 - \lambda_2 - k, k)+2$.}
\end{proof}

Now for computational purposes it is useful to have a way of determining 
exactly the number of partitions of $2d$ that have a given coefficient.  
For integers $d, k, r > 0$, we let $L_{d,k,r}$ 
be the number of partitions $\lambda$ of $2d$ with $\langle s_{(d,d)} \ast
s_{(d+k,d-k)}, s_\lambda \rangle = r$.  Our previous theorems have shown that
$L_{d,k,r} = 0$ for $r > \lfloor k/2 \rfloor + 1$ (see Theorem \ref{cleanest})
and $L_{d,k,r} = L_{d-6,k-2, r-1} + 1$ for $r>1$ (see Theorem \ref{magicpartition}).
These recurrences will allow us to completely determine the generating functions
for the coefficients $L_{d,k,r}$.  For this purpose, we set
$$L_{k,r}(q) = \sum_{d \geq 0} L_{d,k,r} q^d~.$$

\begin{corollary}
With the convention that $G_k(q) = 0$ for $k<0$, then
$L_{k,r}(q) = 0$ for $r>\lfloor k/2 \rfloor + 1$, and
\begin{equation} \label{eq:exactcoeff}
L_{k,1}(q) = G_{k}(q) - 2q^6 G_{k-2}(q) + q^{12} G_{k-4}(q)
\end{equation}
\begin{equation}\label{eq:magicpartition}
L_{k,r}(q) = q^{6r-6} L_{k-2r+2,1}(q)~.
\end{equation}
\end{corollary}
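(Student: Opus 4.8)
The plan is to prove the three assertions in the order they are stated, treating the closed form \eqref{eq:exactcoeff} for $L_{k,1}(q)$ as the crux. The starting point is a bookkeeping identity relating $G_k(q)$ to the $L_{k,r}(q)$. For a fixed $d$, I would group the partitions $\lambda \vdash 2d$ according to the value $r = \langle s_{(d,d)}\ast s_{(d+k,d-k)}, s_\lambda \rangle$ of their coefficient; since each partition with coefficient $r$ contributes exactly $r$ to the total multiplicity, this gives $\sum_{\lambda \vdash 2d}\langle s_{(d,d)}\ast s_{(d+k,d-k)}, s_\lambda\rangle = \sum_{r \geq 1} r\,L_{d,k,r}$. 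Multiplying by $q^d$ and summing over $d$ (using that $L_{d,k,r}=0$ for $d<k$, so the summation ranges match that of $G_k$) yields the key relation
\begin{equation*}
G_k(q) = \sum_{r \geq 1} r\, L_{k,r}(q),
\end{equation*}
which is a finite sum because $L_{k,r}(q)=0$ for $r > \lfloor k/2 \rfloor + 1$.

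The vanishing assertion $L_{k,r}(q)=0$ for $r > \lfloor k/2\rfloor + 1$ is then immediate from Theorem \ref{cleanest} (equivalently Corollary \ref{rugsum}), which bounds every coefficient of $s_{(d,d)}\ast s_{(d+k,d-k)}$ by $\lfloor k/2\rfloor+1$: no partition of any $2d$ has coefficient $r$, so $L_{d,k,r}=0$ for all $d$ and the generating function vanishes. For \eqref{eq:magicpartition}, I would translate the recurrence $L_{d,k,r}=L_{d-6,k-2,r-1}$ valid for $r\geq 2$ into generating functions; this recurrence is exactly the bijection $\lambda \mapsto \lambda + (6,4,2)$ furnished by Theorem \ref{magicpartition}, sending coefficient-$(r-1)$ partitions for the parameter pair $(d-6,k-2)$ to coefficient-$r$ partitions for $(d,k)$. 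A shift of index $d \mapsto d-6$ gives $L_{k,r}(q)=q^6 L_{k-2,r-1}(q)$, and iterating this $r-1$ times down to coefficient $1$ produces $L_{k,r}(q)=q^{6(r-1)}L_{k-2(r-1),1}(q)=q^{6r-6}L_{k-2r+2,1}(q)$.

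The main step is \eqref{eq:exactcoeff}. Here I would substitute \eqref{eq:magicpartition} into the key relation to obtain $G_k(q)=\sum_{r\geq 1} r\,q^{6r-6}L_{k-2r+2,1}(q)$, and then form the second difference $G_k(q) - 2q^6 G_{k-2}(q) + q^{12}G_{k-4}(q)$ (with the conventions $G_j=0$ and $L_{j,1}(q)=0$ for $j<0$). Writing $\ell_j := L_{j,1}(q)$ and reindexing by $j=r-1$, the three shifted copies read $G_k=\sum_{j\geq 0}(j+1)q^{6j}\ell_{k-2j}$, $G_{k-2}=\sum_{j\geq 0}(j+1)q^{6j}\ell_{k-2-2j}$, and $G_{k-4}=\sum_{j\geq 0}(j+1)q^{6j}\ell_{k-4-2j}$, so that the coefficient of $\ell_{k-2j}$ in the combination is $(j+1)-2j+(j-1)=0$ for every $j\geq 1$, while the coefficient of $\ell_k$ is $1$. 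Hence the second difference collapses to $\ell_k = L_{k,1}(q)$, which is precisely \eqref{eq:exactcoeff}.

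The only genuine obstacle is aligning the index shifts in the three shifted copies of the key relation so that the telescoping identity $(j+1)-2j+(j-1)=0$ applies uniformly across all $j\geq 1$; once the bookkeeping identity $G_k=\sum_r r\,L_{k,r}$ and the iteration \eqref{eq:magicpartition} are in hand, everything reduces to this short and self-correcting second-difference computation, with the boundary conventions $G_j=L_{j,1}=0$ for $j<0$ ensuring the collapse holds for small $k$ as well.
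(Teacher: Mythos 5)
Your proposal is correct, and its main ingredients coincide with the paper's: the bound from Theorem \ref{cleanest} gives the vanishing of $L_{k,r}(q)$ for $r>\lfloor k/2\rfloor+1$, the bijection $\lambda\mapsto\lambda+(6,4,2)$ of Theorem \ref{magicpartition} gives \eqref{eq:magicpartition}, and the bookkeeping identity $G_k(q)=\sum_{r\geq 1} r\,L_{k,r}(q)$ (the paper's \eqref{eq:recgf}) is the bridge to \eqref{eq:exactcoeff}. Where you genuinely differ is the final step. The paper proves \eqref{eq:exactcoeff} by induction on $k$: it writes $L_{k,1}=G_k-\sum_{r\geq 2} r L_{k,r}$, substitutes \eqref{eq:magicpartition} and then the inductive hypothesis for each $L_{k-2r+2,1}$ (legitimate since $k-2r+2<k$), and collapses three sums of $G$'s via the cancellation $(r+1)-2r+(r-1)=0$, with base cases $L_{0,1}=G_0$ and $L_{1,1}=G_1$. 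You instead substitute \eqref{eq:magicpartition} into the bookkeeping identity once, obtaining $G_m(q)=\sum_{j\geq 0}(j+1)q^{6j}L_{m-2j,1}(q)$ for every $m$, and verify directly that the second difference $G_k-2q^6G_{k-2}+q^{12}G_{k-4}$ telescopes to $L_{k,1}$ by the same cancellation, now applied to the coefficients of the $L_{m,1}$'s rather than of the $G$'s. The two computations are dual to one another, but yours dispenses with induction and base cases entirely: once the conventions $G_j(q)=L_{j,1}(q)=0$ for $j<0$ are in place, the displayed identity holds for all $m$ and the collapse is a short formal manipulation. That is a modest but real simplification; the only point requiring care, which you correctly flag, is the $j=1$ term, where the factor $(j-1)=0$ stands in for the absent contribution from $G_{k-4}$, so the boundary conventions make the cancellation uniform.
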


\begin{proof}
Theorem \ref{magicpartition} explains  \eqref{eq:magicpartition}
because
\begin{align*}
L_{k,r}(q) &= \sum_{d \geq 0} \#\{ \lambda : C_{(d,d)(d+k,d-k)\lambda} = r \} q^d\\
&= \sum_{d \geq 0} \#\{ \lambda : C_{(d-6,d-6)(d+k-8,d-k+4)(\lambda+(6,4,2))} = r-1 \} q^d\\
&= \sum_{d \geq 0} \#\{ \lambda : C_{(d-6r+6,d-6r+6)(d+k-8r+8,d-k+4r-4)(\lambda+(6r-6,4r-4,2r-2))} 
= 1 \} q^d\\
&= q^{6r-6} \sum_{d \geq 0} \#\{ \lambda : C_{(d-6r+6,d-6r+6)(d+k-8r+8,d-k+4r-4)(\lambda
+(6r-6,4r-4,2r-2))} 
= 1 \} q^{d-6r+6}\\
&= q^{6r-6} L_{k-2r+2,1}(q)~.
\end{align*}

Now we also have by definition and Theorem \ref{cleanest} that
\begin{equation}\label{eq:recgf}
G_k(q) = \sum_{r = 1}^{\lfloor k/2 \rfloor + 1} r L_{k,r}(q)~.
\end{equation}
Hence, we can use this formula and  \eqref{eq:magicpartition} 
to define $L_{k,r}(q)$ recursively.  It remains to show that the formula for $L_{k,1}(q)$
stated in \eqref{eq:exactcoeff} satisfies this formula, which we do by induction.
Given that $L_{0,1}(q) = G_0(q)$ and $L_{1,1}(q) = G_{1}(q)$ and $L_{k,1}(q)=0$ for $k<0$, 
then assuming that the formula holds for values smaller than $k>1$ then 
\eqref{eq:recgf} yields
\begin{align*}
L_{k,1}(q) &= G_{k}(q) - \sum_{r=2}^{\lfloor k/2 \rfloor + 1} r L_{k,r}(q)\\
&= G_{k}(q) - \sum_{r\geq2} r q^{6r-6}L_{k-2r+2,1}(q)\\
&= G_{k}(q) - \sum_{r\geq2} r q^{6r-6}(G_{k-2r+2}(q) - 2 q^6 G_{k-2r}(q) 
+ q^{12} G_{k-2r-2}(q))\\
&= G_{k}(q) 
- \sum_{r\geq1} (r+1) q^{6r}G_{k-2r}(q) 
+ \sum_{r\geq2} 2 r q^{6r} G_{k-2r}(q) 
- \sum_{r\geq3} (r-1) q^{6r} G_{k-2r}(q)\\
&= G_{k}(q) - 2 q^{6} G_{k-2}(q) + q^{12}G_{k-4}(q)~.
\end{align*}
Therefore, by induction we have that \eqref{eq:exactcoeff} holds for all $k>0$.
\end{proof}

\end{subsection}

\end{section}

\end{document}